\theoremstyle{definition}
\newtheorem{theorem}{Theorem}[section]
\newtheorem{lemma}[theorem]{Lemma}
\newtheorem{proposition}[theorem]{Proposition}
\newtheorem{remark}[theorem]{Remark}
\newtheorem{definition}{Definition}
\newtheorem{assumption}{Assumption}
\DeclareMathOperator*{\argmin}{\arg\!\min}
\newcommand{\norm}[1]{\mathop{}\left\lVert#1\right\rVert}
\newcommand{\scalar}[2]{\mathop{}\left\langle#1,#2\right\rangle}
\newcommand{\eqdef}{\stackrel{\mathrm{def}}{=}}
\newcommand{\dt}{\mathrm{d}t}
\newcommand{\ds}{\mathrm{d}s}
\newcommand{\diff}{\mathrm{d}}
\title{Near-optimal Closed-loop Method via Lyapunov Damping for Convex Optimization}
\author{
	\textbf{Severin Maier\qquad \textbf{Camille Castera}$^\ast$}\\
	Department of Mathematics\\
	University of Tübingen\\
	Germany
	\and
	\textbf{Peter Ochs}\\
	Department of Mathematics and Computer Science\\
	Saarland University
	\\
	Germany
}
\newcommand{\labeltext}[2]{%
	\@bsphack
	\def\@currentlabel{#1}{\label{#2}}%
	\@esphack
}
\begin{document}

	\maketitle

	\begin{abstract}
		We introduce an autonomous system with closed-loop damping for first-order convex optimization. While, to this day, optimal rates of convergence are almost exclusively achieved by non-autonomous methods via open-loop damping (\textit{e.g.}, Nesterov's algorithm), we show that our system, featuring a closed-loop damping, exhibits a rate arbitrarily close to the optimal one. We do so by coupling the damping and the speed of convergence of the system via a well-chosen Lyapunov function.
		By discretizing our system we then derive an algorithm and present numerical experiments supporting our theoretical findings.
	\end{abstract}

	\renewcommand*{\thefootnote}{$^\ast$}
	\footnotetext[1]{Corresponding author: \texttt{camille.castera@protonmail.com}}
	\renewcommand*{\thefootnote}{\arabic{footnote}}
	\setcounter{footnote}{0} 
	
	\section{Introduction}

We consider the unconstrained minimization of a smooth convex real-valued and lower-bounded function $f$ over a Hilbert space $\mathcal{H}$:
	\begin{equation*}
		\min_{x\in \mathcal{H}} f(x).
	\end{equation*}
	One of the most famous algorithms for such optimization problems is Nesterov's Accelerated Gradient method (NAG) \cite{nesterov1983method}, which is known to achieve the optimal rate of convergence for first-order methods on convex functions with Lipschitz-continuous gradient. Among several ways to explain the efficiency of NAG, \citet{NIPS2014} studied the algorithm through the lens of Ordinary Differential Equations (ODEs) and proposed the following model:
	\begin{equation}\label{eq::AVD}
		\tag{\mbox{$ \text{AVD}_{a} $}}
		\ddot{x}(t) + \frac{a}{t}\dot{x}(t) + \nabla f(x(t)) = 0, \qquad \forall t \geq t_0,
	\end{equation}
	where $ a>0 $, $ t_0 \geq 0 $ and $ \nabla f $ denotes the gradient of the smooth real-valued convex function $ f $.
	Here, $ \dot{x}\eqdef\frac{\mathrm{d}x}{\mathrm{d}t}$, respectively $\ddot{x}\eqdef \frac{\mathrm{d}^2x}{\mathrm{d}t^2}$, denotes the first, resp.\ second, time-derivative (or velocity, resp.\ acceleration) of the solution $ x $ of the ODE.
	AVD stands for Asymptotically Vanishing Damping and relates to the coefficient $a/t$.
	NAG is obtained by (non-straightforward) discretization of \eqref{eq::AVD}. Conversely \eqref{eq::AVD} can be seen as NAG with infinitesimal step-sizes. Following \cite{NIPS2014}, many works studied the system \eqref{eq::AVD}, and notably proved that when $a>3$, the function values along a solution (or trajectory) of \eqref{eq::AVD} converge with the asymptotic rate $ o\left(\frac{1}{t^2}\right) $ to the optimal value as $ t \to +\infty $ \cite{NIPS2014,attouch2015fast,may2017asymptotic}. This matches the rate of NAG in the case of  discrete (or iterative) algorithms \citep{nesterov1983method}. We further discuss  additional results for \eqref{eq::AVD} and other possible choices of $a$ later in Section~\ref{sec::relwork}.

	The interest in the connection between ODEs (or dynamical systems) and algorithms comes from the abundance of theory and tools for analyzing ODEs and the insights that they provide for understanding algorithms.
	In particular, continuous-time analyses relying on Lyapunov functions can often be adapted to the discrete setting, see \textit{e.g.}, \cite{attouch2015fast}.
	In this work, we are interested in a special case of the Inertial Damped Gradient (IDG) system:
	\begin{equation}\label{eq::IDG}
		\tag{\mbox{$ \text{IDG}_{\gamma} $}}
		\ddot{x}(t) + \gamma(t)\dot{x}(t) + \nabla f(x(t)) = 0, \qquad \forall t \geq t_0,
	\end{equation}
	where $\gamma$ is a positive function and is called ``damping'' by analogy with mechanics.
	There are two inherently different ways of designing the damping coefficient $\gamma$: the so-called open- and closed-loop manners.
	Optimal convergence rates are usually obtained via open-loop damping, for example, when $\gamma(t)=\frac{a}{t}$ with $a>3$ in \eqref{eq::AVD}. Achieving the same with a closed-loop damping is significantly more challenging as explained hereafter.
	We propose a closed-loop damping that provides \eqref{eq::IDG} with near-optimal rate of convergence. Our special instance of \eqref{eq::IDG} is called \eqref{LD} and is introduced below.
	We first discuss the problem setup in more details.

	\subsection{Problem setting}

	The damping $\frac{a}{t} $ in \eqref{eq::AVD} depends explicitly on the time variable $ t $, making the system non-autonomous.
	We say that a damping with such explicit dependence on the time $t$ is \emph{open-loop}.
	In contrast, a damping $ \gamma $ that does not explicitly depend on $ t $ is called \emph{closed-loop} and makes \eqref{eq::IDG} an autonomous ODE (since the ODE is then independent of $t_0$):
	the feedback for the system, in terms of $\gamma$, may only depend on the state $x$ (and its derivatives) but \emph{not explicitly} on the time $t$.
	For optimization, autonomous ODEs are often preferable over non-autonomous ones, since the dependence on the time $ t $, and hence on the initial time $ t_0 $, is removed. For example, even though the asymptotic rate of convergence of \eqref{eq::AVD} does not depend on $ t_0 $, the trajectory does depend on the choice of $ t_0 $: when choosing a large $ t_0 $, the damping $a/t$ is very small at all time and the ``damping effect'' is almost completely lost. We illustrate such a pathological behavior in Figure \ref{fig::NAGwithoffset}.
	Additionally some important tools for analyzing ODEs only hold (or are simpler to use) for autonomous ODEs, see \textit{e.g.}, the Hartman--Grobman Theorem \cite{hartman1960lemma, grobman1959homeomorphism} or the so-called ODE method \citep{ljung1977analysis,sorin2005}, which formally states that algorithms with vanishing step-sizes asymptotically behave like solutions of corresponding ODEs.

	In this work we therefore tackle the following question:
	\smallskip
	\begin{center}
		\emph{Can one design the damping $\gamma$ in \eqref{eq::IDG} in a closed-loop manner (so as to make the ODE autonomous) while still achieving the optimal convergence rate of \eqref{eq::AVD}?}
	\end{center}
	\smallskip
\begin{figure}[t]%
	\centering
	\begin{minipage}{0.42\linewidth}
		\includegraphics[width=\linewidth]{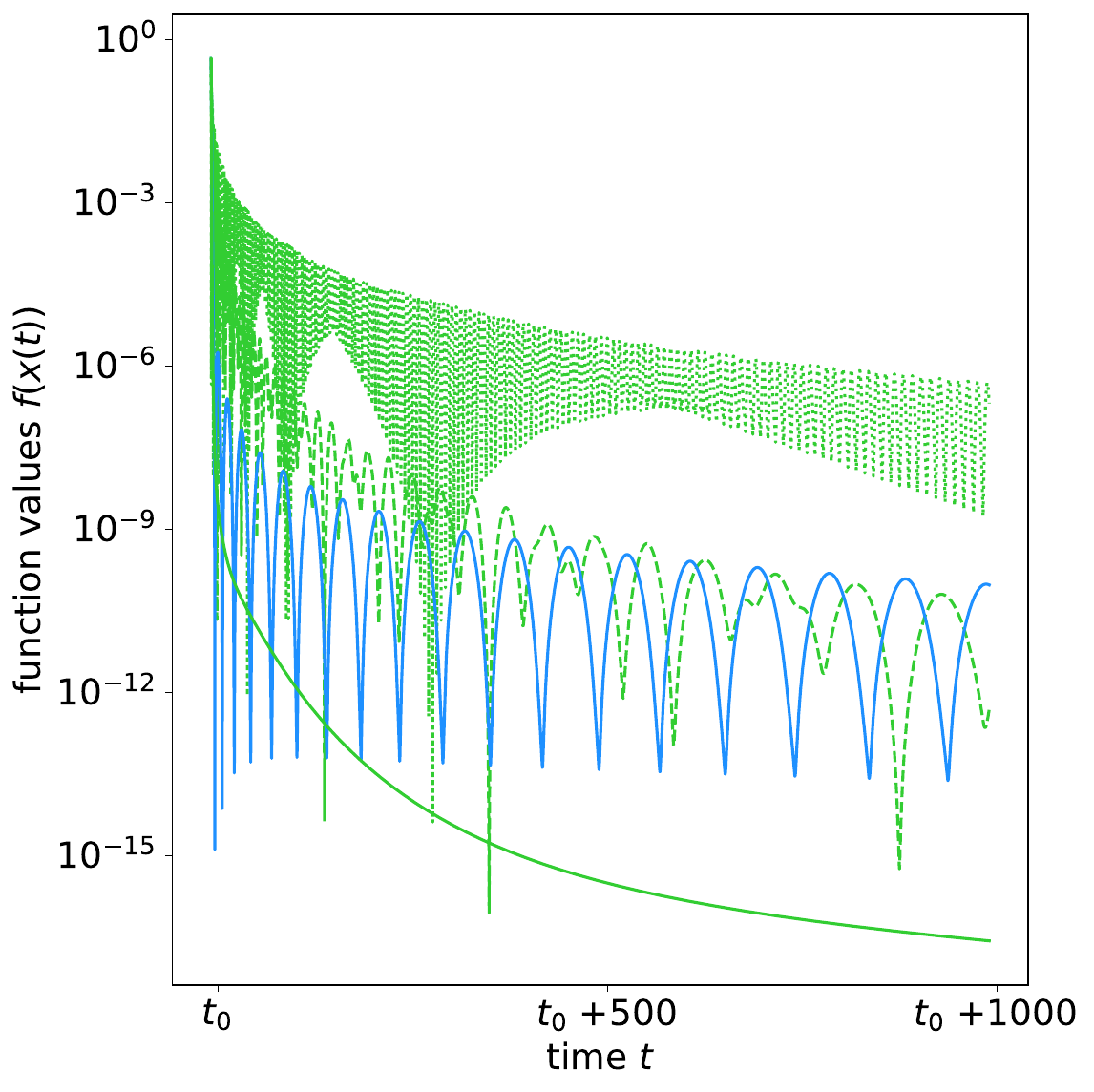}
	\end{minipage}
	\begin{minipage}{0.42\linewidth}
		\includegraphics[width=\linewidth]{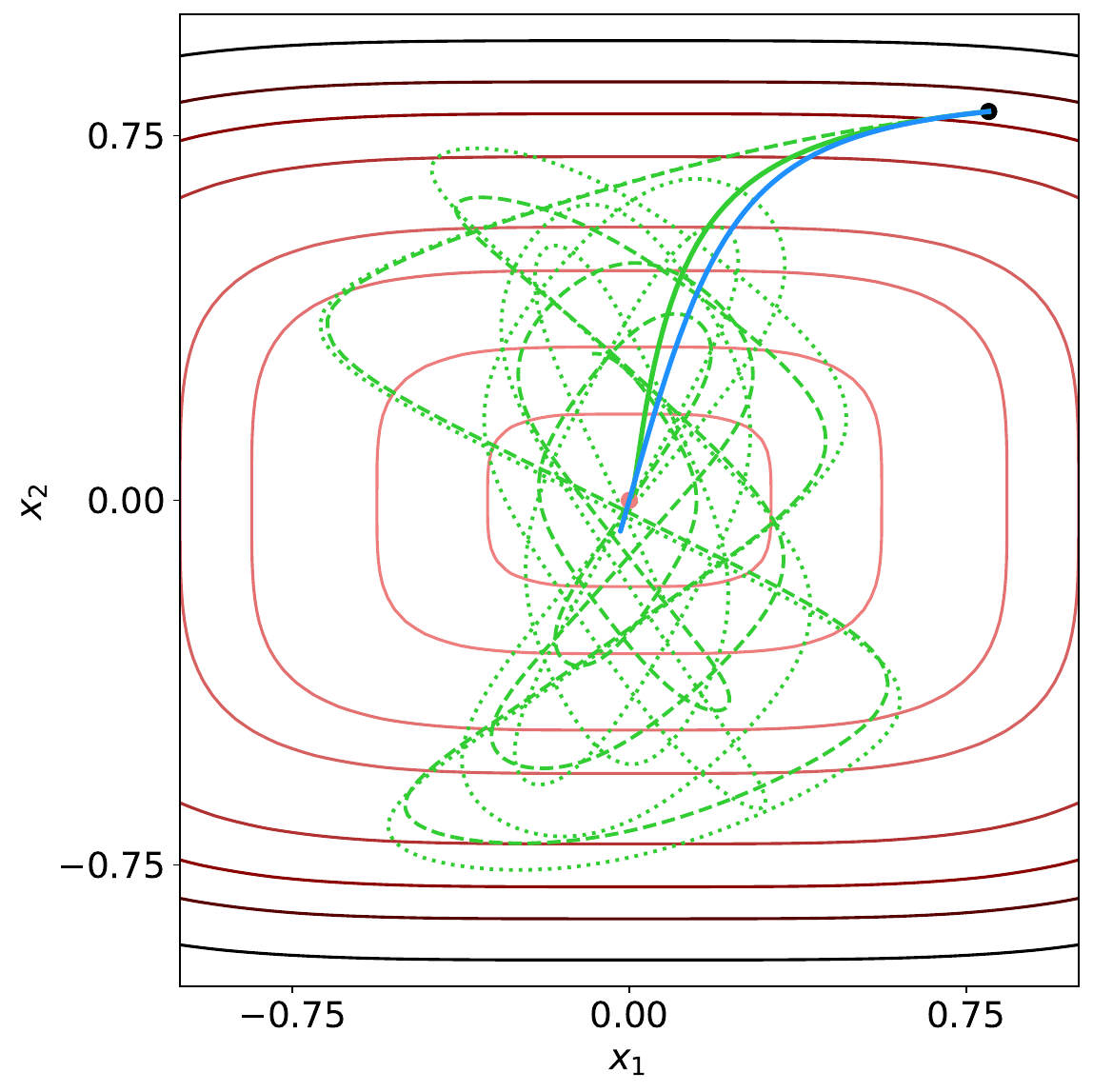}
	\end{minipage}
	\begin{minipage}{0.5\linewidth}
		\includegraphics[width=\linewidth]{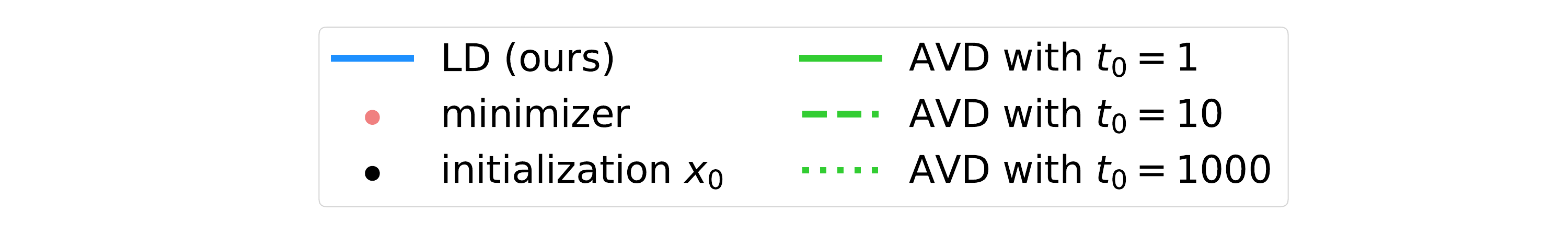}
	\end{minipage}
	\caption{Comparison of our autonomous system \eqref{LD} and the non-autonomous one \eqref{eq::AVD} for different initial times $ t_0 $ on the 2D function $ f(x_1,x_2) = x_1^4+0.1x_2^4 $. The left plot shows the evolution of the function values over time. The right plot shows the trajectories in the space $(x_1,x_2)$. Different initial times heavily affect the solution of \eqref{eq::AVD}, but not \eqref{LD}. We approximated the solutions using NAG for \eqref{eq::AVD} and LYDIA (see Algorithm~\ref{algo::LD}) for \eqref{LD}, both with very small step-sizes.
		\label{fig::NAGwithoffset}}
\end{figure}

	This question hides a circular argument that makes it hard to solve. Indeed, according to \citet{cabot2009long}, a sufficient condition on $\gamma$ for the convergence of the function values is that $\int_{t_0}^{+\infty} \gamma(t)\dt = +\infty $.
	The straightforward choice is then the constant damping $ \gamma(t) = a > 0 $, which in \eqref{eq::IDG} yields the Heavy Ball with Friction (HBF) algorithm \citep{polyak1964}, known to be sub-optimal for convex functions.
	We therefore seek a closed-loop damping that converges to zero, but not ``too fast''.
	In the open-loop setting, the natural choice is $ \gamma(t)=\frac{a}{t}, \ a > 0 $ because this is the fastest polynomial decay that converges to zero while being non-integrable.
	Further \cite{attouch2017timedependentviscosity} showed that any damping of the form $ \frac{a}{t^\beta} $ with $ \beta \in (0,1) $ yields a sub-optimal rate.
	Therefore we formally seek to design a closed-loop damping $\gamma$ that behaves like $ \frac{1}{t} $.
	Because it cannot depend directly on the variable $t$, our closed-loop damping $\gamma$ must be built from other quantities of the system that may converge to zero, for example $\norm{\dot{x}(t)}$. \emph{However, the rate at which such quantities converge to zero depend themselves on the choice of $\gamma$}.
	To escape this loop of thought, we design $\gamma$ using quantities that behave asymptotically like $ \frac{1}{t} $ in the case of \eqref{eq::AVD} and investigate whether this choice of $\gamma$ still gives fast convergence rates.  This idea takes inspiration from \citep{attouch2022fast} which used $\norm{\dot{x}(t)}$ as damping, with a key difference that we now explain.


	\subsection{Our contribution}

	In the case of \eqref{eq::AVD} with $a>3$, it is known that under standard assumptions the quantity $E(t) \eqdef f(x(t))- f^\star + \frac{1}{2}\norm{\dot{x}(t)}^2 $ (where $ f^\star\eqdef\min_{\mathcal H}f$) is such that $E(t)=o\left(\frac{1}{t^2}\right)$  \citep{may2017asymptotic}. Therefore, following the aforementioned intuition, we propose a closed-loop damping defined for all $t\geq t_0$ by $ \gamma(t) \eqdef \sqrt{E(t)} $.
	Our version of \eqref{eq::IDG} then reads:
	\begin{equation}\label{LD}
			\tag{LD}
			\ddot{x}(t) + \sqrt{E(t)} \dot{x}(t) + \nabla f(x(t)) = 0, \qquad \forall t \geq t_0,
	\end{equation}
	and is called \eqref{LD} for \emph{Lyapunov Damping}. It will indeed turn out that $ E $ is non-increasing, making it a ``Lyapunov function'' for \eqref{LD}.
	Note that \eqref{LD} assumes the availability of the optimal value $f^\star$, we further discuss this later in Remark~\ref{rem::fstar}.
	Our main result is that our closed-loop damping system \eqref{LD} yields the following rate of convergence.
	\begin{theorem}\label{thm::fastratedelta}
		Assume that $ f $ is a continuously differentiable convex function and that $ \argmin_{\mathcal{H}} f \neq \emptyset $. Then, for any bounded solution $ x $ of \eqref{LD} and for any $ \delta > 0 $,
		\begin{equation*}
			f(x(t))-f^\star = o\left(\frac{1}{t^{2-\delta}}\right).
		\end{equation*}
	\end{theorem}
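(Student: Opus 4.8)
The plan is to show that, by its very definition, $E$ is a non-increasing Lyapunov function whose decay is driven precisely by $\norm{\dot x}$, and then to \emph{bootstrap} from a crude integrability estimate on $f(x(\cdot))-f^\star$ to the claimed almost-quadratic rate, each round of the argument pushing the admissible exponent halfway towards the critical value $2$; this last point is exactly what forces the loss of $\delta$.

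\textbf{Step 1 (monotonicity and basic estimates).} Differentiating $E$ along a solution and using \eqref{LD} gives $\dot E(t)=\scalar{\dot x(t)}{\ddot x(t)+\nabla f(x(t))}=-\sqrt{E(t)}\norm{\dot x(t)}^2\le 0$, so $E$ is non-increasing. In particular $\norm{\dot x(t)}^2\le 2E(t)\le 2E(t_0)$, the damping $\sqrt E$ is bounded, and since $x$ is bounded and $f\in C^1$, $\nabla f(x(\cdot))$ and hence $\ddot x$ are bounded. Where $E>0$ one has $\tfrac{\diff}{\dt}\sqrt{E(t)}=-\tfrac12\norm{\dot x(t)}^2$, whence $\int_{t_0}^{+\infty}\norm{\dot x(t)}^2\dt<+\infty$. (If $E$ vanishes at some time it stays $0$ afterwards and the conclusion is then trivial, so we assume $E>0$ throughout.)

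\textbf{Step 2 (the values converge: $E(t)\to 0$).} Since $E$ is non-increasing and nonnegative, $E(t)\to E_\infty\ge 0$; as $t\mapsto\norm{\dot x(t)}^2$ has bounded derivative and is integrable, $\dot x(t)\to 0$, hence $f(x(t))-f^\star=E(t)-\tfrac12\norm{\dot x(t)}^2\to E_\infty$. Suppose $E_\infty>0$. Pick $x^\star\in\argmin_{\mathcal H}f$ and set $h(t)\eqdef\tfrac12\norm{x(t)-x^\star}^2$; using \eqref{LD} and convexity, $\ddot h+\sqrt E\,\dot h=\norm{\dot x}^2-\scalar{x-x^\star}{\nabla f(x)}\le\norm{\dot x}^2-(f(x)-f^\star)$, which is $\le -E_\infty/4<0$ for all $t\ge t_1$ large. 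Writing $\Gamma(t)\eqdef\int_{t_1}^t\sqrt{E(s)}\ds$, we get $\tfrac{\diff}{\dt}\big(e^{\Gamma}\dot h\big)=e^{\Gamma}(\ddot h+\sqrt E\,\dot h)\le -\tfrac{E_\infty}{4}e^{\Gamma}$; integrating and using $\sqrt{E_\infty}\le\sqrt E\le\sqrt{E(t_0)}$ (so $\Gamma(t)\to+\infty$ and $\int_{t_1}^t e^{\Gamma}\ds\ge\tfrac{1}{\sqrt{E(t_0)}}(e^{\Gamma(t)}-1)$) yields $\limsup_{t\to+\infty}\dot h(t)\le-\tfrac{E_\infty}{4\sqrt{E(t_0)}}<0$, which forces $h(t)\to-\infty$, a contradiction. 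Hence $E_\infty=0$, so $\sqrt{E(t)}=\tfrac12\int_t^{+\infty}\norm{\dot x(s)}^2\ds$ and $f(x(t))-f^\star\le E(t)\to 0$.

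\textbf{Step 3 (bootstrap).} For $\alpha\ge 0$ consider the property $P(\alpha)$: $\int_{t_0}^{+\infty}t^{\alpha}\norm{\dot x(t)}^2\dt<+\infty$ and $\int_{t_0}^{+\infty}t^{\alpha}(f(x(t))-f^\star)\dt<+\infty$ (equivalently $\int_{t_0}^{+\infty}t^{\alpha}E(t)\dt<+\infty$). Integrating $\tfrac{\diff}{\dt}(\dot h+\sqrt E\,h)\le\norm{\dot x}^2-(f(x)-f^\star)$ (this follows from Step 2 because $\tfrac{\diff}{\dt}\sqrt E\le 0$), and using $\int\norm{\dot x}^2<+\infty$ together with $|\dot h(T)|\le R\,\norm{\dot x(T)}\le R\sqrt{2E(t_0)}$ where $R\eqdef\sup_{t}\norm{x(t)-x^\star}<+\infty$, shows $P(0)$; then $P(\beta)$ holds trivially for every $\beta\le 0$. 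I claim: if $P(\beta)$ holds for all $\beta<\alpha_0$, then $P(\theta)$ holds for all $\theta<(\alpha_0+1)/2$. Indeed, an integration by parts turns $\int t^{\theta}\norm{\dot x}^2=-2\int t^{\theta}\tfrac{\diff}{\dt}\sqrt E$ into the question of finiteness of $\int t^{\theta-1}\sqrt E$, and the inequality $\sqrt{ab}\le\tfrac12(a+b)$ gives $t^{\theta-1}\sqrt E\le\tfrac12 t^{2\theta-2-\mu}+\tfrac12 t^{\mu}E$, whose first term is integrable iff $\mu>2\theta-1$ and whose second is controlled by $P(\mu)$. For the value integral one integrates $\tfrac{\diff}{\dt}\big(t^{\theta}\dot h+\sqrt E\,t^{\theta}h\big)=t^{\theta}(\ddot h+\sqrt E\,\dot h)+\theta t^{\theta-1}(\dot h+\sqrt E\,h)+t^{\theta}h\,\tfrac{\diff}{\dt}\sqrt E$, bounds $t^{\theta}(\ddot h+\sqrt E\,\dot h)\le t^{\theta}\norm{\dot x}^2-t^{\theta}(f(x)-f^\star)$, discards the terms of favorable sign, and estimates what remains using $R$, the bound $\sqrt E\,h\le\tfrac{R^2}{2}\sqrt E$, Cauchy--Schwarz, and the pointwise decay $\norm{\dot x(t)}\le\sqrt{2E(t)}=o(t^{-(1+\mu)/2})$ — the last because $\int t^{\mu}E<+\infty$ and $E$ non-increasing give $E(t)=o(t^{-1-\mu})$. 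Every one of these terms closes precisely when one can pick $\mu\in(2\theta-1,\alpha_0)$, i.e.\ when $\theta<(\alpha_0+1)/2$. Iterating $\alpha_0\mapsto(\alpha_0+1)/2$ from $\alpha_0=0$ shows that $P(\theta)$ holds for every $\theta<1$.

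\textbf{Step 4 (conclusion).} Fix $\delta>0$; it suffices to treat $\delta\in(0,1)$, as larger $\delta$ give a weaker statement. Put $\theta=1-\delta\in(0,1)$. Since $\int_{t_0}^{+\infty}t^{\theta}E(t)\dt<+\infty$ and $E$ is non-increasing, for large $t$ we have $\int_{t/2}^{t}s^{\theta}E(s)\ds\ge E(t)\int_{t/2}^t s^{\theta}\ds\ge c_\theta\,t^{1+\theta}E(t)$ with $c_\theta>0$, while the left-hand side tends to $0$; hence $t^{2-\delta}E(t)\to 0$, and in particular $f(x(t))-f^\star\le E(t)=o\!\left(t^{-(2-\delta)}\right)$.

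The crux of the proof is Step 3: the weighted integrations by parts must be arranged so that both the boundary terms and the cross-term $\int t^{\theta}\norm{\dot x}^2$ are absorbed, which is only possible after upgrading the integral bound $\int t^{\mu}E<+\infty$ into the pointwise decay $E(t)=o(t^{-1-\mu})$, and it is the ``halving'' $\alpha_0\mapsto(\alpha_0+1)/2$ that prevents reaching the exponent $2$ exactly. Step 2 is the other delicate point, since a priori the closed-loop damping $\sqrt E$ might degenerate; excluding this requires the comparison argument above rather than a one-line Lyapunov estimate.
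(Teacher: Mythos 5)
Your proof is correct, and its skeleton is the paper's: the closed-loop identities $\frac{\diff E}{\dt}=-\sqrt{E}\norm{\dot x}^2$ and $\frac{\diff\sqrt{E}}{\dt}=-\frac12\norm{\dot x}^2$ (Lemma~\ref{lem::derEanddamping}), the anchor-function inequality \eqref{eq::anchorfunctionode}, weighted integration by parts, and a halving bootstrap whose fixed point forces the loss of $\delta$. The execution of the bootstrap, however, genuinely differs. The paper (Theorem~\ref{thm::fastrate}) runs a pointwise induction $E(t)=o\bigl(t^{-(\alpha_n-\alpha_{n-1}\varepsilon)}\bigr)$ with a fixed $\varepsilon$ and shuttles between integral and pointwise statements via Lemmas~\ref{lem::integrable_littleo} and~\ref{lem::littleo_integrable}; in particular the integrability of $t^{\frac{\alpha_n}{2}-1-\alpha_n\varepsilon}\sqrt{E(t)}$ is extracted from the pointwise induction hypothesis. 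You instead run the induction on the integrability classes $\int t^{\alpha}E(t)\dt<+\infty$, close the key term $\int t^{\theta-1}\sqrt{E(t)}\dt$ by the Young splitting $t^{\theta-1}\sqrt{E}\le\frac12 t^{2\theta-2-\mu}+\frac12 t^{\mu}E$ with $\mu\in(2\theta-1,\alpha_0)$, and convert to a pointwise rate only for boundary terms and at the very end, using just the monotonicity of $E$ (the elementary $\int_{t/2}^{t}$ estimate in place of Lemma~\ref{lem::integrable_littleo}). This removes the $\varepsilon$-bookkeeping in the exponents and makes the halving mechanism transparent, whereas the paper's version keeps an explicit pointwise rate at each stage. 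Two further remarks: your Step 2 (showing $E_\infty=0$ by contradiction with an integrating factor) differs from the paper's proof of Theorem~\ref{thm::convto0} but is actually redundant in your scheme, since $P(0)$ together with the monotonicity of $E$ already yields $E(t)\to 0$; and the estimates you only sketch in Step 3 (the terms $\theta\int t^{\theta-1}\dot h$ and $-T^{\theta}\dot h(T)$) do close exactly as indicated via $|\dot h|\le R\norm{\dot x}\le R\sqrt{2E}$ and $E(t)=o(t^{-1-\mu})$, so $\mu>2\theta-1$ is indeed the only constraint.
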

	This means that the rate of our method is arbitrarily close to $o\left(\frac{1}{t^2}\right)$, the optimal one achieved by \eqref{eq::AVD} with $ a > 3 $.
	Also, note that \eqref{LD} does not make use of the hyper-parameter $a$, whose choice is crucial in \eqref{eq::AVD}.

	\subsection{Organization}
	In Section \ref{sec::relwork} we review related work regarding ODEs for optimization, closed-loop damping, and known results in the open-loop case. In Section \ref{sec::prelandexistence} we make the setting  precise and show the existence of solutions to \eqref{LD}. Section~\ref{sec::main} is devoted to showing our main result. In Section \ref{sec::exp} we derive a practical algorithm from \eqref{LD} and use it to perform numerical experiments. We finish by drawing conclusions and further discussing our results.


	\section{Related Work}\label{sec::relwork}
	\paragraph{ODEs for optimization}
	There is a long line of work in exploiting the interplay between ODEs and optimization algorithms, going back, at least, to the work of Polyak's \cite{polyak1964} Heavy Ball with Friction (HBF) method for acceleration. As previously stated, \citet{NIPS2014} linked NAG \cite{nesterov1983method} to the differential equation \eqref{eq::AVD}, hence providing a new view on the heavily used, yet not perfectly understood algorithm. NAG is however obtained via a non-straightforward discretization of \eqref{eq::AVD} since the gradient of $f$ is evaluated at an ``extrapolated point''. Recently, \citet{alecsa2021implicit} proposed a model with ``implicit Hessian'', whose Euler explicit discretization yields NAG. Higher-order ODEs have also been introduced to better understand NAG: \citet{attouch2016hessian} proposed a model with ``Hessian damping'' based on \citep{alvarez2002}, while \citet{shi2021resolution} similarly considered higher-resolution ODEs allowing to better distinguish NAG from other \eqref{eq::IDG} systems.

	\paragraph{Closed-loop damping}

	The closest work to ours is \cite{attouch2022fast}, which proposed a closed-loop damping of the form $ \gamma = \norm{\dot{x}}^p $ for several values of $ p \in \mathbb{R} $.
	Our work builds on \cite{attouch2022fast} since our damping $\sqrt{E}$ also involves $\norm{\dot{x}}$. Yet, \citet{attouch2022fast} could provide counterexamples for which their systems do not achieve near-optimal rates, unlike ours. A key difference is that our damping $\sqrt{E}$ is non-increasing, while $\norm{\dot{x}}$ may oscillate heavily.
	They nonetheless derived rates under additional assumptions (\textit{e.g.} strong convexity or the Kurdyka-{\L}ojasiewicz (KL) property).

	Another line of work regarding closed-loop damping consists in deriving them from a feedback loop that follows an algebraic equation. This was initiated by \citep{attouch2016dynamic} for proximal Newton's method and then used for first-order optimization by \cite{lin2022control} then generalized by \cite{attouch2023timescaling}. These works rely on a so-called ``time-rescaling'' technique, which formally amounts to accelerating convergence by making time go faster. This allows obtaining optimal convergence rates in the ODE setting, with the drawback that this acceleration is \emph{artificial}: it cannot be transferred to algorithms via explicit discretization.\footnote{Discretizing time-rescaling amounts to use larger step-sizes which is not possible without causing numerical instability issues.} By combining this approach with ``time-averaging'', \cite{lin2022control,attouch2023timescaling} could further obtain ``non-artificial'' acceleration and optimal rates from the ODE perspective. They then derived practical algorithms that are faster than the sub-optimal $O(\frac{1}{k})$ of gradient descent but still sub-optimal compared to NAG. Furthermore \citep{attouch2023timescaling} must resort to using implicit discretization (via proximal operators) which is expensive in general. In contrast our approach does not involve time-rescaling, which makes it possible to use explicit Euler discretization ensuring numerical stability under standard assumptions (see Theorem~\ref{thm::LYDIAconv}).

	Regarding other work involving closed-loop damping,
	\citet{adly2006} proposed to replace the term $ \gamma(t)\dot{x}(t) $ in \eqref{eq::IDG} by a ``non-smooth'' potential. Their non-smooth potential provides the solutions of their differential inclusion with finite length and convergence to a point very close to a minimizer of $f$ (if such a point exists). However, since the solution does not converge to the minimal value, it does not have the optimal rate that we seek.

	Our damping makes use of the optimality gap $ f(x(t)) - f^\star $ (see Remark~\ref{rem::fstar} for further discussion). This idea is not new and is used, for example, in the Polyak step-size $ \frac{f(x_k)-f^\star}{\norm{\nabla f(x_k)}^2}$ \cite{polyak1969minimization}. In our system \eqref{LD} the optimality gap is rather used for designing the damping $\gamma$ rather than the step-size.

	\paragraph{Open-loop damping and proof techniques}
	The convergence properties of \eqref{eq::AVD} (and more generally \eqref{eq::IDG} in the open-loop setting) have been intensively studied. First, for \eqref{eq::AVD} with $ a > 3 $, following \citep{NIPS2014}, the convergence rate $ o\left(\frac{1}{t^2}\right) $ for function values and the convergence of the trajectories were proved in \cite{may2017asymptotic, attouch2015fast,chambolle2015convergence}. In the setting $a<3$,  \cite{aujol2017subcritical,attouch2019subcritical} derived the sub-optimal rate $ \mathcal{O}\left(\frac{1}{t^{\frac{2a}{3}}} \right) $. For the critical value $ a = 3 $ only the rate $ \mathcal{O}\left(\frac{1}{t^2}\right) $ is known and whether this rate can be improved to $ o\left(\frac{1}{t^2}\right) $ and whether the trajectories converge remains open.
	Rates have also been derived under additional assumptions such as the KL property \citep{aujol2019optimal}.

	Regarding \eqref{eq::IDG}, \citet{attouch2017timedependentviscosity} developed general conditions for the convergence of the values and of the trajectories, which unify several results mentioned above. All these results have in common that they rely on the analysis of Lyapunov functions, like $E$ previously introduced. We refer to \cite{wibisono2016variational, wilson2021lyapunov} for more details on Lyapunov analyses.
 	The proof of our main result takes inspiration from those in \cite{cabot2012hyperbolic} where sufficient conditions to derive optimal rates are provided. They must nonetheless be significantly adapted since some of the conditions in \cite{cabot2012hyperbolic} do not hold generally for closed-loop dampings like ours. We replace them by using a specific property of our system (see Lemma \ref{lem::derEanddamping} hereafter). A more detailed comparison follows our main analysis in Remark \ref{rem::comparisonhyperbolic}.


	\section{Preliminaries and Existence of Solutions}\label{sec::prelandexistence}
	Throughout the paper we fix a real Hilbert space $\mathcal{H}$ with inner product $ \scalar{\cdot}{\cdot} $ and induced norm $ \norm{\cdot} $. \\
	We make the following assumptions on the function $ f $.
	\begin{assumption}\label{ass::f}
		The function $ f:\mathcal{H} \to \mathbb{R} $ is
		\begin{enumerate}[label=(\roman*),noitemsep,topsep=0pt]
			\itemsep=4pt
			\item convex and continuously differentiable with locally Lipschitz-continuous gradient $ \nabla f $;
			\item bounded from below by $ f^\star \eqdef \inf_{\mathcal{H}} f. $
		\end{enumerate}
	\end{assumption}
	We also fix an initial time $ t_0 \geq 0 $ and initial conditions $ x(t_0) = x_0 \in \mathcal{H} $, and $ \dot{x}(t_0) = \dot{x}_0 \in \mathcal{H}$.
	\begin{definition}
		A function $ x: [t_0,+\infty[ \to \mathcal{H}$, which is twice continuously differentiable on $ ]t_0,+\infty[ $ and continuously differentiable on $ [t_0,+\infty[ $, is called a (global) \emph{solution} or \emph{trajectory} to \eqref{eq::IDG}, resp.\ \eqref{LD}, if it satisfies \eqref{eq::IDG}, resp.\ \eqref{LD}, for all $t> t_0$ and satisfies the initial conditions previously mentioned.
	\end{definition}

	Given the setting above, we can ensure the existence and uniqueness of the solutions of \eqref{LD}.
	\begin{theorem}\label{thm::existence}
		Under Assumption \ref{ass::f}, there exists a unique solution $ x $ to \eqref{LD} with initial conditions $\left(x_0,\dot{x}_0\right)\in \mathcal{H} \times \mathcal{H} $ and initial time $ t_0 \geq 0 $.
	\end{theorem}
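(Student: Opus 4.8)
The plan is to rewrite \eqref{LD} as an autonomous first-order system and apply the Cauchy--Lipschitz (Picard--Lindelöf) theorem for local existence and uniqueness, then upgrade the local solution to a global one via an a priori bound coming from the (soon-to-be Lyapunov) function $E$. Setting $y \eqdef (x,\dot x)$ and introducing
\[
	g(x,v) \eqdef f(x) - f^\star + \tfrac{1}{2}\norm{v}^2 \geq 0,
	\qquad
	F(x,v) \eqdef \bigl(v,\ -\sqrt{g(x,v)}\,v - \nabla f(x)\bigr),
\]
the system \eqref{LD} is equivalent to $\dot y(t) = F(y(t))$ with $y(t_0) = (x_0,\dot x_0)$. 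Since $F$ does not depend on $t$, it suffices to prove that $F$ is locally Lipschitz on $\mathcal{H}\times\mathcal{H}$, so that the Cauchy--Lipschitz theorem applies.

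The only non-routine point is the local Lipschitz continuity of the damping map $(x,v)\mapsto \sqrt{g(x,v)}\,v$ near points where $g$ vanishes; elsewhere $g$ is bounded away from zero, $\sqrt{g}$ is as regular as $g$ (hence locally Lipschitz by Assumption~\ref{ass::f}(i)), and so is its product with $v$. If $g(x_0,v_0)=0$ then, since $f(x_0)\geq f^\star$ and $\norm{v_0}^2\geq 0$, necessarily $v_0 = 0$ and $x_0 \in \argmin_{\mathcal{H}} f$, whence $\nabla f(x_0)=0$. Choosing $r>0$ such that $\nabla f$ is $L$-Lipschitz on $\bar B(x_0,r)$, the descent lemma evaluated at $x - \nabla f(x)/L$ together with $f\geq f^\star$ and $\nabla f(x_0)=0$ gives $\norm{\nabla f(x)}^2 \leq 2L\,(f(x)-f^\star)$ for all $x \in B(x_0,r/2)$. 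Since $\nabla g(x,v) = (\nabla f(x),v)$, this yields $\norm{\nabla g(x,v)}^2 \leq 2M\,g(x,v)$ on $B(x_0,r/2)\times\mathcal{H}$ with $M \eqdef \max(L,1)$, and a standard consequence is that $\sqrt{g}$ — the square root of a nonnegative $C^{1,1}$ function — is $\sqrt{M/2}$-Lipschitz there (on $\{g>0\}$ one has $\norm{\nabla\sqrt{g}} = \norm{\nabla g}/(2\sqrt{g})\leq\sqrt{M/2}$, and this bound extends across the zero set of $g$ by continuity). Hence $\sqrt{g}\,v$ is locally Lipschitz, so $F$ is locally Lipschitz, and Cauchy--Lipschitz provides a unique maximal solution $x$ on some interval $[t_0,T_{\max})$.

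It remains to show $T_{\max}=+\infty$. On $[t_0,T_{\max})$ the pair $(x,\dot x)$ is $C^1$ and $\ddot x = -\sqrt{E}\,\dot x - \nabla f(x)$ is continuous, so $E$ is differentiable with $\dot E(t) = \scalar{\dot x(t)}{\nabla f(x(t)) + \ddot x(t)} = -\sqrt{E(t)}\,\norm{\dot x(t)}^2 \leq 0$; thus $E(t)\leq E(t_0)$, and in particular $\norm{\dot x(t)}^2 \leq 2E(t_0)$ on $[t_0,T_{\max})$. Consequently, if $T_{\max}<+\infty$, then $\norm{x(t)-x_0}\leq (T_{\max}-t_0)\sqrt{2E(t_0)}$, so $x$ stays in a fixed bounded set, on which $\nabla f$ is bounded (it is the gradient of an everywhere-finite convex function, hence bounded on bounded subsets). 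Therefore $\ddot x$ is bounded on $[t_0,T_{\max})$, so $x$ and $\dot x$ are Lipschitz there and, $\mathcal{H}$ being complete, converge strongly to limits $(x_*,v_*)$ as $t\to T_{\max}^-$. Applying the local existence and uniqueness result again with initial data $(x_*,v_*)$ at time $T_{\max}$ and gluing would extend the solution beyond $T_{\max}$, contradicting maximality; hence $T_{\max}=+\infty$.

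The main obstacle I anticipate is exactly the local Lipschitz continuity of $\sqrt{E}$ near its zero set — i.e., showing that taking a square root does not destroy Lipschitz regularity — which is why the sign condition $f\geq f^\star$ (Assumption~\ref{ass::f}(ii)) and the local $C^{1,1}$ regularity of $f$ are both essential here; the remaining steps are a routine a-priori-bound argument.
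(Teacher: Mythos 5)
Your proof is essentially correct, but it takes a genuinely different route from the paper at the key point, namely the degenerate set where the damping coefficient vanishes. The paper's proof excludes the set $\argmin_{\mathcal{H}} f\times\{0\}$ from the phase space, applies Picard--Lindelöf only on the open set $\mathcal{A}=(\mathcal{H}\times\mathcal{H})\setminus(\argmin_{\mathcal{H}} f\times\{0\})$ where the square root is harmless, treats initial data in $\argmin_{\mathcal{H}} f\times\{0\}$ separately via the constant solution, and handles a trajectory reaching $\partial\mathcal{A}$ in finite time by an explicit constant extension (with a case distinction according to whether $\argmin_{\mathcal{H}} f$ is empty). You instead prove that the vector field is locally Lipschitz on \emph{all} of $\mathcal{H}\times\mathcal{H}$: at a zero of $g(x,v)=f(x)-f^\star+\tfrac12\norm{v}^2$ you derive the local gradient inequality $\norm{\nabla f(x)}^2\le 2L\,(f(x)-f^\star)$ from the descent lemma (your check that the auxiliary point $x-\nabla f(x)/L$ stays in the ball where $\nabla f$ is $L$-Lipschitz is the right care to take), conclude $\norm{\nabla g}^2\le 2M g$, and deduce that $\sqrt{g}$ is Lipschitz near its zero set (a Glaeser-type argument; your ``extends across the zero set by continuity'' is slightly informal but is made rigorous by the standard $\sqrt{g+\epsilon}$ regularization). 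This buys a cleaner statement with no case distinctions, and it in fact strengthens the paper's treatment: uniqueness for initial data at an equilibrium, which the paper disposes of rather quickly, follows for you directly from Picard--Lindelöf since the field is Lipschitz there too.

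One claim in your global-existence step is wrong as stated: in an infinite-dimensional Hilbert space the gradient of an everywhere-finite convex ($C^{1}$, locally $C^{1,1}$) function need \emph{not} be bounded on bounded sets (e.g.\ on $\ell^2$, $f(x)=\sum_n n\,\max(|x_n|-\tfrac12,0)^2$ is convex, locally $C^{1,1}$, bounded below, yet $\norm{\nabla f(e_n)}=n$ on the unit sphere). So you cannot conclude boundedness of $\ddot{x}$ from ``$x$ stays in a bounded set'' alone. The fix is immediate and keeps your structure: since $\norm{\dot x}\le\sqrt{2E(t_0)}$ and $T_{\max}<+\infty$, $x$ is Lipschitz on $[t_0,T_{\max})$, hence $x(t)\to x_*$ exists already at this stage; the closure of the trajectory is then the continuous image of a compact interval, hence compact, so $\nabla f$ is bounded along it, which gives boundedness of $\ddot x$, Lipschitz continuity of $\dot x$, and the existence of $v_*$; your restart-and-glue contradiction then goes through unchanged. (The paper's own proof glosses over the same point when it asserts that $\ddot x$ is bounded ``by \eqref{LD}''.)
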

	The proof relies on the Picard--Lindelöf Theorem and is postponed to Appendix \ref{app::existence}.
	\begin{remark}
	The local Lipschitz-continuity of $\nabla f$ is only required to guarantee the existence and uniqueness of the solutions on $ [t_0,+\infty[ $ (see below), but is not used elsewhere in our analysis.
	\end{remark}

	We finally recall a special case of the Landau notation for asymptotic comparison that we use heavily in the sequel.
	\begin{definition}\label{def::biglittleo}
		For any non-negative function $ g\colon\mathbb{R}\to\mathbb{R} $ and $ \forall \alpha \geq 0 $,
		\begin{align*}
			g(t) = o\left(\frac{1}{t^\alpha}\right) &\iff \lim_{t \to +\infty} t^\alpha g(t) = 0.
		\end{align*}
	\end{definition}


	\section{Main Result: Convergence Rates for \texorpdfstring{\eqref{LD}}{LD}}\label{sec::main}
	This section is devoted to proving our main result Theorem \ref{thm::fastratedelta}, which states that the solution of \eqref{LD} achieves a convergence rate that is arbitrarily close to the optimal one for convex functions. First we show that $ E $ is indeed a Lyapunov function for the system \eqref{LD}.
	\subsection{The Lyapunov function of \texorpdfstring{\eqref{LD}}{LD}}
	Throughout what follows, let $x$ be the solution of \eqref{LD} with the initial conditions stated in Section~\ref{sec::prelandexistence}.
	Recall that the damping coefficient in \eqref{LD} is $\sqrt{E(t)}$ where for all $t\geq t_0$, $E(t)= f(x(t))-f^\star + \frac{1}{2}\norm{\dot{x}(t)}^2$. We first show identities that are specific to \eqref{LD} and that play a crucial role in what follows.
	\begin{lemma}\label{lem::derEanddamping}
		Under Assumption \ref{ass::f} the solution $ x $ of \eqref{LD} is such that $ E $ is continuously differentiable for all $ t \geq t_0 $ and
		\begin{equation}\label{eq::derE}
			\frac{\diff E(t)}{\dt} = - \sqrt{E(t)}\norm{\dot{x}(t)}^2,
		\end{equation}
		so in particular $ E $ is non-increasing. Furthermore, for all $ t \geq t_0 $ it holds that
		\begin{equation}\label{eq::dersqrtE}
			\frac{\diff \sqrt{E(t)}}{\dt} = - \frac{1}{2} \norm{\dot{x}(t)}^2, \quad \text{or equivalently,} \quad \int_{t_0}^{t}\norm{\dot{x}(s)}^2\ds = 2\sqrt{E(t_0)}-2\sqrt{E(t)}.
		\end{equation}
		\begin{proof}
			We apply the chain rule and use the fact that $x$ solves \eqref{LD} to obtain:
				\begin{multline}
					\frac{\diff E(t)}{\dt}  = \scalar{\nabla f(x(t))}{\dot{x}(t)} + \scalar{\ddot{x}(t)}{\dot{x}(t)} \\
					= \scalar{\nabla f(x(t)) - \sqrt{E(t)}\dot{x}(t) - \nabla f(x(t))}{\dot{x}(t)}
					= - \sqrt{E(t)}\norm{\dot{x}(t)}^2,
				\end{multline}
			which proves \eqref{eq::derE}.
			As for the second part, the chain rule and \eqref{eq::derE} imply
			\begin{equation*}
				\frac{\diff\sqrt{E(t)}}{\dt} = \frac{1}{2\sqrt{E(t)}}\frac{\diff E(t)}{\dt} = - \frac{1}{2\sqrt{E(t)}} \sqrt{E(t)}\norm{\dot{x}(t)}^2 = - \frac{1}{2} \norm{\dot{x}(t)}^2.
			\end{equation*}
			Finally, by the Fundamental Theorem of Calculus and the continuity of $ \dot{x} $ this is equivalent to
			\begin{equation*}
				\int_{t_0}^{t}\norm{\dot{x}(s)}^2\ds = 2\sqrt{E(t_0)}-2\sqrt{E(t)}. 
			\end{equation*}
		\end{proof}
	\end{lemma}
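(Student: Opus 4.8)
The plan is to differentiate $E$ directly via the chain rule, substitute the equation \eqref{LD} for $\ddot x$ so that the two gradient terms cancel, and then handle the square root with the elementary rule $\frac{\diff}{\dt}\sqrt{u}=\dot u/(2\sqrt u)$ before integrating.

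First I would settle the regularity. Since $x$ is a solution it is $C^1$ on $[t_0,+\infty[$ and $C^2$ on $]t_0,+\infty[$, and as $f$ is $C^1$ (Assumption~\ref{ass::f}) the map $t\mapsto f(x(t))$ is $C^1$ on $[t_0,+\infty[$. Note that $E(t)\geq 0$ for all $t$, since $f(x(t))\geq f^\star$ and $\tfrac12\norm{\dot x(t)}^2\geq 0$, so $\sqrt{E}$ is well defined. To upgrade the regularity of $\tfrac12\norm{\dot x}^2$ to the endpoint $t_0$ I would rewrite \eqref{LD} as $\ddot x(t)=-\sqrt{E(t)}\dot x(t)-\nabla f(x(t))$ for $t>t_0$: the right-hand side is continuous on all of $[t_0,+\infty[$, hence $\ddot x$ extends continuously to $t_0$, so $x\in C^2([t_0,+\infty[)$ and therefore $E\in C^1([t_0,+\infty[)$.

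Next I would compute, for $t\geq t_0$,
\[
\frac{\diff E(t)}{\dt}=\scalar{\nabla f(x(t))}{\dot x(t)}+\scalar{\ddot x(t)}{\dot x(t)},
\]
and substitute $\ddot x(t)=-\sqrt{E(t)}\dot x(t)-\nabla f(x(t))$; the $\nabla f$ terms cancel and one is left with $-\sqrt{E(t)}\norm{\dot x(t)}^2$, which is \eqref{eq::derE}, and is $\leq 0$, so $E$ is non-increasing. For the second identity, wherever $E(t)>0$ the chain rule gives $\frac{\diff}{\dt}\sqrt{E(t)}=\frac{1}{2\sqrt{E(t)}}\frac{\diff E(t)}{\dt}=-\tfrac12\norm{\dot x(t)}^2$. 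The degenerate case is harmless: since $E$ is nonnegative and non-increasing, if it vanishes at some point it (and $\dot x$) stays $0$ afterwards so both sides of the identity are $0$; moreover, from $\norm{\dot x}^2\leq 2E$ one gets $\frac{\diff E}{\dt}\geq -2E^{3/2}$, and comparison with the ODE $u'=-2u^{3/2}$ shows $E$ cannot reach $0$ in finite time when $E(t_0)>0$. Finally, integrating $\frac{\diff}{\dt}\sqrt{E}=-\tfrac12\norm{\dot x}^2$ from $t_0$ to $t$ and invoking the Fundamental Theorem of Calculus (using continuity of $\dot x$) yields $\int_{t_0}^{t}\norm{\dot x(s)}^2\,\ds=2\sqrt{E(t_0)}-2\sqrt{E(t)}$.

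I do not expect a genuine obstacle here. The only points needing a touch of care are the $C^1$-regularity of $\tfrac12\norm{\dot x}^2$ at the left endpoint $t_0$, handled by bootstrapping from the equation as above, and the differentiability of $\sqrt{E}$, which follows once $E\geq 0$ (and, if one wants, strict positivity) is checked. The essential mechanism is the algebraic cancellation that makes $\frac{\diff E}{\dt}$ equal to $\sqrt{E}$ times the kinetic part $\norm{\dot x}^2$ — which is exactly the design feature of \eqref{LD} coupling the damping $\sqrt{E}$ to its own Lyapunov function and producing the strikingly simple derivative of $\sqrt{E}$.
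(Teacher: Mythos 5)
Your proof is correct and follows essentially the same route as the paper's: chain rule on $E$, substitution of \eqref{LD} to cancel the gradient terms, chain rule on $\sqrt{E}$, and the Fundamental Theorem of Calculus for the integral identity. The extra care you take — extending $\ddot x$ continuously to $t_0$ via the equation, and handling the degenerate case $E=0$ (trivially if $E(t_0)=0$, and via the comparison $\frac{\diff E}{\dt}\geq -2E^{3/2}$ showing $E$ stays positive otherwise) — is correct and in fact patches details the paper's proof leaves implicit.
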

	The function $ E $ describes a quantity which is non-increasing along the trajectory $ x $, and note that $ E(t) = 0 $ if, and only if, $ f(x(t)) = f^\star $ and $ \dot{x}(t) = 0 $. Such a function is called \emph{a Lyapunov function} for the system \eqref{LD}.


	\subsection{Preliminary convergence results}
	We make the following assumptions, which are consistent with those in Theorem \ref{thm::fastratedelta}.
	\begin{assumption}\label{ass::minsol}We assume that
		\begin{enumerate}[label=(\roman*),noitemsep,topsep=0pt]
			\itemsep=4pt
			\item $\argmin_{\mathcal{H}} f \neq \emptyset$;
			\item the solution $x$ of \eqref{LD} is uniformly bounded on $[t_0,+\infty[$.\label{bounded}
		\end{enumerate}
	\end{assumption}
	\begin{remark}
		Assumption~\ref{ass::minsol}-\ref{bounded} holds, for example, when $f$ is coercive.
	\end{remark}

	We begin our analysis by showing that the trajectory $ x $ minimizes the function $ f $. We do so by showing that $ E(t) $ tends to zero as $ t \to +\infty $.
	\begin{theorem}\label{thm::convto0}
		Under Assumptions~\ref{ass::f} and~\ref{ass::minsol}, $ E(t) $ converges to zero as $ t \to +\infty $. This implies in particular that $ \displaystyle f(x(t)) \xrightarrow[t \to + \infty]{} f^\star $ and $\displaystyle \norm{\dot{x}(t)} \xrightarrow[t \to +\infty]{} 0 $.
	\end{theorem}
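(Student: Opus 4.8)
The plan is to exploit the monotonicity identity \eqref{eq::dersqrtE} to first show that $\norm{\dot{x}}$ is square-integrable, hence $E$ has a limit $E_\infty\geq0$, and then argue by contradiction that $E_\infty=0$. From \eqref{eq::dersqrtE} we get $\int_{t_0}^{+\infty}\norm{\dot{x}(s)}^2\,\ds = 2\sqrt{E(t_0)}-2\sqrt{E_\infty}<+\infty$, where $E_\infty\eqdef\lim_{t\to+\infty}\sqrt{E(t)}^2$ exists because $E$ is non-increasing and bounded below by $0$ (Lemma~\ref{lem::derEanddamping}). In particular $\sqrt{E(t)}\to\sqrt{E_\infty}$, so $\gamma(t)=\sqrt{E(t)}$ is bounded and bounded away from issues at infinity.

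Next I would introduce the standard anchor function $h(t)\eqdef\tfrac12\norm{x(t)-x^\star}^2$ for a fixed $x^\star\in\argmin f$ (nonempty by Assumption~\ref{ass::minsol}). Differentiating, $\dot h(t)=\scalar{x(t)-x^\star}{\dot{x}(t)}$ and, using \eqref{LD} and convexity $\scalar{\nabla f(x(t))}{x(t)-x^\star}\geq f(x(t))-f^\star\geq0$,
\begin{equation*}
	\ddot h(t) = \norm{\dot{x}(t)}^2 + \scalar{x(t)-x^\star}{\ddot{x}(t)} = \norm{\dot{x}(t)}^2 - \sqrt{E(t)}\,\dot h(t) - \scalar{\nabla f(x(t))}{x(t)-x^\star}.
\end{equation*}
Hence $\ddot h(t) + \sqrt{E(t)}\,\dot h(t) \leq \norm{\dot{x}(t)}^2$. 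Since $x$ is bounded (Assumption~\ref{ass::minsol}-\ref{bounded}), $h$ is bounded; combined with $\int\norm{\dot{x}}^2<\infty$ and the fact that $\sqrt{E(t)}\dot h(t)$ is the "friction" term, a Gronwall/integration argument on this differential inequality shows $\dot h$ is integrable-in-a-suitable-sense and $\lim_{t\to+\infty}\dot h(t)$ exists; since $h$ is bounded this limit must be $0$, which forces $\lim_{t\to+\infty}\scalar{x(t)-x^\star}{\dot{x}(t)}=0$. (Alternatively, one integrates $\ddot h + \sqrt{E}\dot h \le \norm{\dot x}^2$ twice and uses boundedness of $h$ to conclude $\liminf h$-type estimates; the key output is that $h$ converges.)

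To finish, suppose for contradiction that $E_\infty>0$. Then $\sqrt{E(t)}\geq\sqrt{E_\infty}>0$ for all $t$, so the damping is uniformly bounded below by a positive constant $c$, i.e.\ \eqref{LD} is, asymptotically, a heavy-ball system with constant friction at least $c$. For such a system one has $\frac{d}{dt}\bigl(f(x(t))-f^\star+\tfrac12\norm{\dot{x}(t)}^2\bigr)=-\sqrt{E(t)}\norm{\dot{x}(t)}^2\le -c\norm{\dot{x}(t)}^2$, which we already know integrates to a finite quantity, so again $\int\norm{\dot{x}}^2<\infty$. The contradiction comes from the potential part: if $E_\infty>0$ then, since $\norm{\dot{x}(t)}\to0$ (which follows once we know $\int\norm{\dot{x}}^2<\infty$ together with a bound on $\ddot{x}$, itself coming from boundedness of $x$, of $\nabla f$ on bounded sets, and of $\sqrt{E}\norm{\dot{x}}$), we would need $f(x(t))-f^\star\to E_\infty>0$; but a standard argument using $\frac{d}{dt}\scalar{\dot{x}(t)}{x(t)-x^\star}$ and convexity (integrating the inequality $\frac{d}{dt}\dot h(t) + c\,\dot h(t)\le \norm{\dot x(t)}^2 - (f(x(t))-f^\star)$ and using that $h,\dot h$ are bounded and $\norm{\dot x}^2$ integrable) forces $\int_{t_0}^{+\infty}(f(x(s))-f^\star)\,\ds<+\infty$, incompatible with $f(x(t))-f^\star\to E_\infty>0$. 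Hence $E_\infty=0$, i.e.\ $E(t)\to0$, which immediately gives $f(x(t))\to f^\star$ and $\norm{\dot{x}(t)}\to0$.

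The main obstacle I anticipate is the step showing $\norm{\dot x(t)}\to 0$ and that the anchor function $h$ actually converges rather than merely oscillating: square-integrability of $\dot x$ alone does not give pointwise convergence to $0$, so one needs to control $\ddot x$ (via boundedness of $x$, local Lipschitzness / boundedness of $\nabla f$ on the bounded range of $x$, and the already-established boundedness of $\sqrt E\norm{\dot x}$) to get uniform continuity of $\norm{\dot x}^2$, then invoke a Barbalat-type lemma. Getting the bookkeeping right so that the "heavy-ball with asymptotically-constant-positive-friction" argument cleanly yields $\int (f(x)-f^\star)<\infty$ — and hence the contradiction with $E_\infty>0$ — is the delicate part; everything else is routine differentiation of Lyapunov-type functionals.
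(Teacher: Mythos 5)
Your overall route is, at its core, the paper's: the same anchor function ($h(t)=\tfrac12\norm{x(t)-x^\star}^2$), the same convexity inequality for $\ddot h$, and the same use of \eqref{eq::dersqrtE} to obtain $\int_{t_0}^{+\infty}\norm{\dot x(t)}^2\dt<+\infty$. The paper integrates the resulting differential inequality directly, gets $\int_{t_0}^{+\infty}E(t)\dt<+\infty$, and concludes from monotonicity of $E$ (Lemma~\ref{lem::derEanddamping}) together with Lemma~\ref{lem::convto0}. You wrap the same ingredients in a contradiction argument; in doing so, the Barbalat step and the claims that $\lim_{t\to+\infty}\dot h(t)$ exists and that $h$ converges are unnecessary (and the latter two are not justified as stated): once you know $\int_{t_0}^{+\infty}(f(x(t))-f^\star)\dt<+\infty$ and $\int_{t_0}^{+\infty}\norm{\dot x(t)}^2\dt<+\infty$, you have $\int_{t_0}^{+\infty}E(t)\dt<+\infty$, which already contradicts $E(t)\geq E_\infty>0$ without ever showing $\norm{\dot x(t)}\to 0$ first.

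The genuine gap is the step where you replace the damping $\sqrt{E(t)}$ by its constant lower bound $c=\sqrt{E_\infty}$. The pointwise inequality $\ddot h(t)+c\,\dot h(t)\le \norm{\dot x(t)}^2-(f(x(t))-f^\star)$ does \emph{not} follow from $\ddot h+\sqrt{E}\,\dot h\le \norm{\dot x}^2-(f-f^\star)$, because $\dot h$ changes sign: when $\dot h(t)<0$ one has $\sqrt{E(t)}\dot h(t)\le c\,\dot h(t)$, so the substitution goes the wrong way — and making the friction integral trivially bounded via $\int c\,\dot h = c\,(h(T)-h(t_0))$ was precisely the purpose of that substitution. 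The term that actually needs control is $-\int_{t_0}^{T}\sqrt{E(t)}\dot h(t)\dt$, and the paper bounds it by integrating by parts with \eqref{eq::dersqrtE}: $-\int_{t_0}^{T}\sqrt{E}\,\dot h\,\dt=-\sqrt{E(T)}h(T)+\sqrt{E(t_0)}h(t_0)-\tfrac12\int_{t_0}^{T}\norm{\dot x(t)}^2 h(t)\dt\le \sqrt{E(t_0)}h(t_0)$, using $h\ge 0$ bounded (a second-mean-value-theorem argument exploiting the monotonicity of $\sqrt{E}$ would also do). With that repair your conclusion $\int_{t_0}^{+\infty}(f(x(t))-f^\star)\dt<+\infty$ is correct and the contradiction closes; as written, however, the key integration rests on a false pointwise inequality.
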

	We make use of the following classical result to prove Theorem \ref{thm::convto0}.
	\begin{lemma}\label{lem::convto0}
		Let $ g $ be a non-negative continuous function on $ [t_0,+\infty[ $ such that $\int_{t_0}^{+\infty}g(t)\dt$ is finite, then either $\displaystyle\lim_{t\to+\infty}g(t)$ does not exist or $\displaystyle \lim_{t\to+\infty}g(t) = 0$.
	\end{lemma}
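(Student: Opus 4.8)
The plan is to establish the stated dichotomy by proving its only nontrivial half: \emph{if} the limit $\ell \eqdef \lim_{t\to+\infty} g(t)$ exists, then necessarily $\ell = 0$. Since $g$ is non-negative, any such limit satisfies $\ell \geq 0$, so it suffices to rule out $\ell > 0$, and I would do this by contradiction.

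Assume for contradiction that the limit exists and $\ell > 0$. Applying the definition of the limit with, say, $\varepsilon = \ell/2$, I get a threshold $T \geq t_0$ such that $g(t) \geq \ell/2$ for all $t \geq T$. Then, using non-negativity of $g$ on $[t_0,T]$ for the first inequality and the tail bound for the second, for every $S > T$,
\[
\int_{t_0}^{S} g(t)\dt \;\geq\; \int_{T}^{S} g(t)\dt \;\geq\; \frac{\ell}{2}\,(S - T).
\]
Letting $S \to +\infty$ forces $\int_{t_0}^{+\infty} g(t)\dt = +\infty$, contradicting the hypothesis that this integral is finite. Hence $\ell = 0$, which is exactly what the second alternative of the lemma asserts; combined with the trivial observation that the limit may simply fail to exist, this proves the claim.

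There is no real obstacle here: the argument is a one-line $\varepsilon$--$T$ estimate, and the only point worth noting is that mere continuity of $g$ is all that is needed — no monotonicity or uniform continuity enters, because once we \emph{assume} the limit exists we never use regularity of $g$ beyond that, and the lower bound on the tail integral relies only on $g \geq 0$ together with the definition of the limit. (One should also keep in mind why the dichotomy cannot be strengthened to ``$g(t)\to 0$'': narrow bumps of fixed, or even growing, height with rapidly shrinking width give a finite integral while the limit does not exist, so the ``does not exist'' branch is genuinely necessary.) When this lemma is later invoked to prove Theorem~\ref{thm::convto0}, it will be paired with a separate argument showing the relevant limit does exist — presumably via monotonicity of $E$ from Lemma~\ref{lem::derEanddamping} — so that the ``does not exist'' alternative is excluded there.
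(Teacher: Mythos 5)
Your proof is correct and is essentially identical to the paper's: both argue by contradiction that if the limit exists and equals some $\ell>0$, then $g(t)\geq \ell/2$ beyond some threshold, so the tail integral diverges, contradicting integrability. The extra remarks on sharpness and on how the lemma is later combined with monotonicity of $E$ are accurate but not part of the paper's argument.
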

	The proof of Lemma \ref{lem::convto0} is postponed to Appendix \ref{app::proofsoflemmata}. We now prove  Theorem~\ref{thm::convto0}.
	\begin{proof}[Proof of Theorem \ref{thm::convto0}]
		Let $ z \in \argmin_{\mathcal{H}}f $, and for all $ t \geq t_0 $ define the so-called ``anchor function'' $ h_z(t) \eqdef \frac{1}{2}\norm{x(t) - z}^2 $. Then $ h_z $ is twice differentiable for all $ t \geq t_0 $ and we have:
		\begin{align}
			\dot{h_z}(t) &= \scalar{x(t)-z}{\dot{x}(t)}. \label{eq::hztdot}\\
			\begin{split}
			\ddot{h_z}(t) &= \norm{\dot{x}(t)}^2 + \scalar{x(t) - z}{\ddot{x}(t)} \\
			&\stackrel{\eqref{LD}}{=} \norm{\dot{x}(t)}^2 + \scalar{x(t) - z}{-\sqrt{E(t)}\dot{x}(t) - \nabla f(x(t))}  \\
			&= \norm{\dot{x}(t)}^2 + \scalar{x(t) - z}{-\sqrt{E(t)}\dot{x}(t)} - \scalar{x(t) - z}{\nabla f(x(t))}  \\
			&\overset{\eqref{eq::hztdot}}{\leq } \norm{\dot{x}(t)}^2 - \sqrt{E(t)}\dot{h_z}(t) + f^\star - f(x(t)),
			\end{split}
		\end{align}
		where we used the first-order characterization of the convexity of $ f $ in the last step. So $ h_z(t) $ fulfills the following differential inequality:
		\begin{equation*}
			\ddot{h_z}(t) +\underset{=E(t)}{\underbrace{f(x(t)) - f^\star + \frac{1}{2}\norm{\dot{x}(t)}^2}} \leq \norm{\dot{x}(t)}^2 + \frac{1}{2}\norm{\dot{x}(t)}^2 - \sqrt{E(t)}\dot{h_z}(t),
		\end{equation*}
		or equivalently,
		\begin{equation}\label{eq::anchorfunctionode}
			E(t) \leq \frac{3}{2}\norm{\dot{x}(t)}^2 - \ddot{h_z}(t) - \sqrt{E(t)}\dot{h_z}(t).
		\end{equation}
		We integrate \eqref{eq::anchorfunctionode} from $ t_0 $ to $ T > t_0 $:
		\begin{align}\label{eq::anchorfunctionodeint}
		\begin{split}
			\int_{t_0}^{T}E(t)\dt &\leq
			\frac{3}{2}\int_{t_0}^{T}\norm{\dot{x}(t)}^2\dt - \int_{t_0}^{T}\ddot{h_z}(t)\dt - \int_{t_0}^{T}\sqrt{E(t)}\dot{h_z}(t)\dt\\
			&= 3\left(\sqrt{E(t_0)}- \sqrt{E(T)}\right) - \dot{h_z}(T) + \dot{h_z}(t_0) - \sqrt{E(t)}h_z(T) + \sqrt{E(0)}h_z(t_0)
			\\
			&\qquad- \frac{1}{2}\int_{t_0}^{T}\norm{\dot{x}(t)}^2h_z(t)\dt,
		\end{split}
		\end{align}
		where we performed integration by parts on the last integral and used Lemma \ref{lem::derEanddamping}. Note that by the boundedness of $ E $ and the continuity of $ f $, $ f(x) -f^\star $ and $ \norm{\dot{x}} $ are uniformly bounded on $[t_0,+\infty[$. This implies together with Assumption \ref{ass::minsol} that $ h_z $ and $ \dot{h_z} $ are uniformly bounded as well. Therefore, $ \dot{h_z}(T) $, $ \sqrt{E(T)} $ and $ \sqrt{E(T)}h_z(T) $ are uniformly bounded from above for all $ T \in [t_0,+\infty[ $. Further $ -\int_{t_0}^{T}\norm{\dot{x}(s)}^2h_z(s)\ds\leq 0$ and $ \dot{h_z}(t_0) $ and $ \sqrt{E(t_0)}h_z(t_0) $ are constants. So the right-hand side in \eqref{eq::anchorfunctionodeint} is uniformly bounded from above for all $ T \in [t_0,+\infty[ $. Therefore we deduce that
		\begin{equation}\label{eq::Eintegrable}
			\int_{t_0}^{+\infty}E(t)\dt < +\infty.
		\end{equation}
		Finally, $ E $ is non-negative and non-increasing by \eqref{eq::derE}, so it \emph{converges} to some value $ E_\infty \in [0,E(t_0)] $ as $ t \to \infty $. Further since $ E $ is continuous we can conclude by Lemma \ref{lem::convto0} that $\displaystyle \lim_{t \to +\infty}E(t) = 0. $
	\end{proof}


	\subsection{Rates of convergence for \texorpdfstring{$E$}{E}}
	The proof of our main result Theorem~\ref{thm::fastratedelta} relies on the following lemma.
	\begin{lemma}\label{lem::integrable_littleo}
		Let $ \alpha \geq 0 $: If $\int_{t_0}^{+\infty}t^\alpha E(t)\dt < + \infty$, then $\displaystyle \lim_{t \to +\infty}t^{\alpha+1}E(t) = 0$.
	\end{lemma}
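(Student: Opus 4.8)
The plan is to turn the finiteness of the weighted integral into a pointwise decay rate by exploiting the monotonicity of $E$ proved in Lemma~\ref{lem::derEanddamping}. This is what distinguishes the situation from Lemma~\ref{lem::convto0}: there, mere integrability only forces a subsequential limit, but here the fact that $E$ is non-increasing lets the value $E(t)$ be controlled by the integral of $s^\alpha E(s)$ over a window just to the left of $t$ whose length is proportional to $t$ itself, which is exactly what is needed to extract the power $t^{\alpha+1}$.

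Concretely, I would fix $t \geq 2t_0$ (so that the window stays inside the domain $[t_0,+\infty[$) and work on the interval $[t/2,t]$. Since $E$ is non-increasing, $E(s) \geq E(t)$ for every $s \in [t/2,t]$, hence
\begin{equation*}
	\int_{t/2}^{t} s^\alpha E(s)\,\diff s \;\geq\; E(t)\int_{t/2}^{t} s^\alpha\,\diff s \;=\; E(t)\,\frac{t^{\alpha+1}-(t/2)^{\alpha+1}}{\alpha+1} \;=\; c_\alpha\, t^{\alpha+1} E(t),
\end{equation*}
where $c_\alpha \eqdef \frac{1-2^{-(\alpha+1)}}{\alpha+1} > 0$ depends only on $\alpha$. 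Rearranging yields
\begin{equation*}
	t^{\alpha+1}E(t) \;\leq\; \frac{1}{c_\alpha}\int_{t/2}^{t} s^\alpha E(s)\,\diff s \;\leq\; \frac{1}{c_\alpha}\int_{t/2}^{+\infty} s^\alpha E(s)\,\diff s .
\end{equation*}

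To finish, I would invoke the hypothesis that $\int_{t_0}^{+\infty} s^\alpha E(s)\,\diff s$ is finite: since the integrand is non-negative, the tail $\int_{t/2}^{+\infty} s^\alpha E(s)\,\diff s$ tends to $0$ as $t \to +\infty$, and therefore $t^{\alpha+1}E(t) \to 0$, which is precisely $E(t) = o\!\left(\frac{1}{t^{\alpha+1}}\right)$ in the sense of Definition~\ref{def::biglittleo}. I do not anticipate any genuine obstacle in this argument; the only points requiring a small amount of care are keeping the integration window inside the domain (handled by restricting to $t \geq 2t_0$, harmless since only the limit matters) and noting that $\alpha+1 \geq 1 > 0$ so the primitive $\int s^\alpha\,\diff s$ behaves as claimed. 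One could equally replace $[t/2,t]$ by any window $[\lambda t, t]$ with $\lambda \in (0,1)$, which changes only the value of the constant $c_\alpha$.
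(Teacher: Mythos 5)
Your proof is correct, and it takes a genuinely different route from the paper's. The paper differentiates $t^{\alpha+1}E(t)$, uses the identity \eqref{eq::derE} to see that the positive part of this derivative is at most $(\alpha+1)t^{\alpha}E(t)$, deduces from the hypothesis that $\lim_{t\to+\infty}t^{\alpha+1}E(t)$ exists and is finite, and then rules out a strictly positive limit by a contradiction with the integrability of $t^{\alpha}E(t)$ (a positive limit would force $t^{\alpha}E(t)\gtrsim 1/t$). You instead bypass the differentiation and the existence-of-the-limit step entirely: using only that $E$ is non-negative and non-increasing (which indeed follows from Lemma~\ref{lem::derEanddamping}), you bound $c_\alpha\, t^{\alpha+1}E(t)$ by the tail $\int_{t/2}^{+\infty}s^{\alpha}E(s)\,\diff s$, which vanishes because the full integral converges. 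Your argument is more elementary and slightly more general — it works for any non-negative non-increasing function, with no reference to the ODE beyond monotonicity — and it yields the limit directly rather than via a two-step ``limit exists, then must be zero'' argument. The paper's version, on the other hand, stays closer to the Lyapunov-derivative bookkeeping used throughout the rest of its analysis, and its first step (finiteness of $\int [\,(t^{\alpha+1}E(t))'\,]_+\diff t$) gives the existence of the limit as a standalone fact. Your handling of the window $[t/2,t]$ (restricting to $t\geq 2t_0$) correctly disposes of the only boundary issue, so there is no gap.
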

	This result follows from standard calculus arguments and is proved later in Appendix \ref{app::proofsoflemmata}. \\
	Observe that \eqref{eq::Eintegrable} already provides us with a first rate by applying Lemma \ref{lem::integrable_littleo} with $ \alpha = 0 $.
	\begin{proposition}\label{prop::rate_n=1} Under Assumption \ref{ass::f} and \ref{ass::minsol}, it holds that
		  \begin{equation*}
		  	E(t) = o\left(\frac{1}{t}\right).
		  \end{equation*}
	\end{proposition}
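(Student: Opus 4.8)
The plan is to read off the proposition as an immediate corollary of what has already been proved. In the proof of Theorem~\ref{thm::convto0} we established, under Assumptions~\ref{ass::f} and~\ref{ass::minsol}, the integrability bound \eqref{eq::Eintegrable}, namely $\int_{t_0}^{+\infty} E(t)\,\dt < +\infty$. This is precisely the hypothesis of Lemma~\ref{lem::integrable_littleo} in the endpoint case $\alpha = 0$, since $t^0 E(t) = E(t)$.

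Concretely, the single step is to apply Lemma~\ref{lem::integrable_littleo} with $\alpha = 0$ to \eqref{eq::Eintegrable}. Its conclusion $\lim_{t\to+\infty} t^{\alpha+1} E(t) = 0$ then reads $\lim_{t\to+\infty} t\, E(t) = 0$, which by Definition~\ref{def::biglittleo} is exactly the asserted rate $E(t) = o\!\left(\tfrac{1}{t}\right)$. Nothing further is required: $E$ is non-negative (being a sum of $f(x(t))-f^\star \geq 0$ and $\tfrac12\norm{\dot x(t)}^2$) and continuous by Lemma~\ref{lem::derEanddamping}, so the input needed for Lemma~\ref{lem::integrable_littleo} is already in place.

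I therefore do not expect any real obstacle here; the only point deserving a moment's care is to confirm that \eqref{eq::Eintegrable} is legitimately available under just Assumptions~\ref{ass::f} and~\ref{ass::minsol}, which it is, as it is the conclusion of the first half of the proof of Theorem~\ref{thm::convto0}. I anticipate this proposition to play the role of a base case: the genuine difficulty will lie in \emph{iterating} it, i.e.\ bootstrapping from $E(t) = o(1/t)$ towards $o(1/t^{2-\delta})$ by repeatedly feeding sharper rates back into (a refined version of) the anchor-function estimate \eqref{eq::anchorfunctionode} together with Lemma~\ref{lem::derEanddamping}, but that goes beyond the present statement.
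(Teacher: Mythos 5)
Your proof is correct and is exactly the paper's argument: the paper likewise obtains this proposition by applying Lemma~\ref{lem::integrable_littleo} with $\alpha = 0$ to the integrability estimate \eqref{eq::Eintegrable} established in the proof of Theorem~\ref{thm::convto0}. No discrepancies to report.
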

	Since $ E $ is a sum of non-negative quantities, any convergence rate of $ E $ translates into a rate for $ f $, hence in particular Proposition \ref{prop::rate_n=1} implies $ f(x(t)) - f^\star = o\left(\frac{1}{t}\right)$.
	The proof of Theorem \ref{thm::fastratedelta} follows similar steps as that of Theorem \ref{thm::convto0}. The main idea is that by combining \eqref{eq::Eintegrable} with \eqref{eq::dersqrtE} we can improve the rate from $ o\left(\frac{1}{t}\right) $ to $ o\left(\frac{1}{t^{3/2}}\right) $ and iteratively repeat this process. This is stated in the following theorem.
	\begin{theorem}\label{thm::fastrate}
		Under Assumptions \ref{ass::f} and \ref{ass::minsol} for all $ 0 < \varepsilon < \frac{1}{2} $ and all $ n \in \mathbb{N}_{\geq 1} $ it holds that
		\begin{equation}\label{eq::fastratealpha}
			E(t) = o\left(\frac{1}{t^{\alpha_n - \alpha_{n-1}\varepsilon}}\right), \ \text{where} \ \alpha_n \eqdef 2 - \left(\frac{1}{2}\right)^{n-1}.
		\end{equation}
	\end{theorem}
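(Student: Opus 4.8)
The plan is to prove \eqref{eq::fastratealpha} by induction on $n$, keeping $\varepsilon\in(0,\tfrac12)$ fixed. The base case $n=1$ is exactly Proposition~\ref{prop::rate_n=1}, since $\alpha_1=1$ and $\alpha_0=0$ give $E(t)=o(1/t^{\alpha_1-\alpha_0\varepsilon})=o(1/t)$. For the inductive step I assume $E(t)=o(1/t^{\beta_n})$ with $\beta_n\eqdef\alpha_n-\alpha_{n-1}\varepsilon$, and I want to find a weight $\alpha\geq 0$ for which $\int_{t_0}^{+\infty}t^{\alpha}E(t)\dt<+\infty$; Lemma~\ref{lem::integrable_littleo} then upgrades this to $E(t)=o(1/t^{\alpha+1})$, and the right choice will be $\alpha\eqdef\alpha_{n+1}-\alpha_n\varepsilon-1$, which yields $E(t)=o(1/t^{\alpha_{n+1}-\alpha_n\varepsilon})$ and closes the induction.

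The core computation recycles the differential inequality \eqref{eq::anchorfunctionode} from the proof of Theorem~\ref{thm::convto0}: for a fixed $z\in\argmin_{\mathcal H}f$ and $h_z(t)=\tfrac12\norm{x(t)-z}^2$ one has $E(t)\leq\tfrac32\norm{\dot x(t)}^2-\ddot{h_z}(t)-\sqrt{E(t)}\dot{h_z}(t)$. I multiply this by $t^{\alpha}\geq 0$, integrate from $t_0$ to $T$, and handle the three integrals by parts. For the first I use Lemma~\ref{lem::derEanddamping} in the form $\norm{\dot x}^2=-2\tfrac{\diff}{\dt}\sqrt{E}$, which turns $\int_{t_0}^{T}t^{\alpha}\norm{\dot x}^2\dt$ into $-2T^{\alpha}\sqrt{E(T)}+2t_0^{\alpha}\sqrt{E(t_0)}+2\alpha\int_{t_0}^{T}t^{\alpha-1}\sqrt{E}\dt$. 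For the second, $-\int_{t_0}^{T}t^{\alpha}\ddot{h_z}\dt=-T^{\alpha}\dot{h_z}(T)+t_0^{\alpha}\dot{h_z}(t_0)+\alpha\int_{t_0}^{T}t^{\alpha-1}\dot{h_z}\dt$. For the third I integrate by parts once more with antiderivative $h_z$ of $\dot h_z$, using $\tfrac{\diff}{\dt}(t^{\alpha}\sqrt E)=\alpha t^{\alpha-1}\sqrt E-\tfrac12 t^{\alpha}\norm{\dot x}^2$, which produces $-T^{\alpha}\sqrt{E(T)}h_z(T)+t_0^{\alpha}\sqrt{E(t_0)}h_z(t_0)+\alpha\int_{t_0}^{T}t^{\alpha-1}\sqrt E\,h_z\dt-\tfrac12\int_{t_0}^{T}t^{\alpha}\norm{\dot x}^2 h_z\dt$; the final integral is $\leq0$ and the boundary term $-T^{\alpha}\sqrt{E(T)}h_z(T)$ is $\leq0$ as well.

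It remains to bound everything uniformly in $T$. From the inductive hypothesis and $\norm{\dot x(t)}^2\leq 2E(t)$ I get $\norm{\dot x(t)}=o(t^{-\beta_n/2})$ and $\sqrt{E(t)}=o(t^{-\beta_n/2})$, and using Assumption~\ref{ass::minsol} (so $\norm{x(t)-z}$ is bounded) together with boundedness of $E$ I also get $|\dot{h_z}(t)|\leq\norm{x(t)-z}\,\norm{\dot x(t)}=o(t^{-\beta_n/2})$ and $0\leq h_z(t)\leq C$. Hence every boundary term at $T$ tends to $0$, and each surviving integrand is, up to a constant, $o(t^{\alpha-1-\beta_n/2})$, which is integrable on $[t_0,+\infty[$ as soon as $\alpha-1-\beta_n/2<-1$, i.e. $\alpha<\beta_n/2$. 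The decisive arithmetic is the identity $\alpha_{n+1}-1=\alpha_n/2=1-(\tfrac12)^n$: it shows that the chosen $\alpha=\alpha_{n+1}-\alpha_n\varepsilon-1$ satisfies $\alpha\geq0$ precisely because $\alpha_{n+1}-\alpha_n\varepsilon>1$ for $\varepsilon<\tfrac12$, and $\alpha<\beta_n/2$ reduces to $\alpha_n>\alpha_{n-1}/2$, true since $\alpha_n\geq1>\alpha_{n-1}/2$. Therefore $\int_{t_0}^{+\infty}t^{\alpha}E(t)\dt<+\infty$ and Lemma~\ref{lem::integrable_littleo} gives the claim.

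The step I expect to be the main obstacle is not any individual estimate but the bookkeeping linking the weight $\alpha$, the current exponent $\beta_n$, and the target exponent $\alpha_{n+1}-\alpha_n\varepsilon$ coherently for \emph{every} $n$ and every admissible $\varepsilon$; in particular one must keep the slack $\beta_n/2-\alpha$ strictly positive, so that $t^{\alpha-1-\beta_n/2}$ is genuinely integrable and not merely $o(1/t)$, while also keeping $\alpha\geq0$ so Lemma~\ref{lem::integrable_littleo} applies — which is exactly what forces the restriction $\varepsilon<\tfrac12$. A secondary point requiring care is that the second integration by parts in the third term hinges on the exact identity $\tfrac{\diff}{\dt}\sqrt{E}=-\tfrac12\norm{\dot x}^2$ of Lemma~\ref{lem::derEanddamping}, a feature special to \eqref{LD}, ensuring that the extra term it generates has a favourable sign instead of having to be reabsorbed into $\int t^{\alpha}E(t)\dt$.
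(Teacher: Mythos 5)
Your proof is correct and follows essentially the same route as the paper: your weight $t^{\alpha}$ with $\alpha=\alpha_{n+1}-\alpha_n\varepsilon-1=\tfrac{\alpha_n}{2}-\alpha_n\varepsilon$ is exactly the paper's, and you use the same anchor inequality \eqref{eq::anchorfunctionode}, the same key identity \eqref{eq::dersqrtE} in the integrations by parts, and the same Lemmas \ref{lem::integrable_littleo} and \ref{lem::littleo_integrable}, with the slack $\beta_n/2-\alpha=\varepsilon>0$ playing the paper's role of the exponent $1+\varepsilon$. The only (harmless) deviation is that you bound $\int t^{\alpha-1}\dot{h_z}(t)\dt$ directly via Cauchy--Schwarz and the induction hypothesis instead of the paper's second integration by parts against $h_z$.
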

	Before proving Theorem \ref{thm::fastrate}, we show that Theorem \ref{thm::fastratedelta} is a direct consequence of this result.
	\begin{proof}[Proof of Theorem \ref{thm::fastratedelta}]
		For any $ 0 < \delta < 1 $, choose $ 0 < \varepsilon < \frac{\delta}{2} $ and observe that
		\begin{equation*}
				\lim_{n \to +\infty} \alpha_n = 2 \text{, hence }
				\lim_{n \to +\infty} \left(\alpha_{n} - \alpha_{n-1}\varepsilon\right) = 2-2\varepsilon > 2-\delta,
		\end{equation*}
		which shows that there exists $ N \in \mathbb{N}_{\geq 1} $, such that $ \alpha_{N} - \alpha_{N-1}\varepsilon > 2 - \delta $ and therefore by Theorem \ref{thm::fastrate}
		\begin{equation*}
			0 = \lim_{t \to +\infty}t^{\alpha_{N}-\alpha_{N-1}\varepsilon}E(t) \geq \lim_{t \to +\infty}t^{2-\delta}E(t) \geq 0,
		\end{equation*}
		hence
		\begin{equation*}
			\lim_{t \to +\infty}t^{2-\delta}E(t) = 0.
		\end{equation*}
		Finally the case $ \delta \geq 1 $ is covered by Proposition \ref{prop::rate_n=1} and, as previously discussed, the following implication holds for all $ \delta > 0 $:
		\begin{equation*}
			E(t) = o\left(\frac{1}{t^{2-\delta}}\right) \Longrightarrow f(x(t))-f^\star = o\left(\frac{1}{t^{2-\delta}}\right). 
		\end{equation*}
	\end{proof}

	It now only remains to prove Theorem \ref{thm::fastrate}. We make use of the following lemma.
	\begin{lemma}\label{lem::littleo_integrable}
		Let $ g$ be continuous and non-negative on $ [t_0,+\infty[ $. Then for any $ \beta >1 $,
		\begin{equation*}
			\lim_{t \to +\infty} t^\beta g(t)  \text{ exists and is finite }\Longrightarrow \int_{t_0}^{+\infty}g(t)\dt < +\infty.
		\end{equation*}
	\end{lemma}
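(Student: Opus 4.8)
The plan is to prove this by a direct comparison with an integrable power function near $+\infty$. First I would set $L \eqdef \lim_{t\to+\infty} t^\beta g(t)$, which exists and is finite by hypothesis, and satisfies $L \geq 0$ because $g \geq 0$ (and $t^\beta > 0$). Applying the definition of the limit with tolerance $1$, there exists $T \geq t_0$ such that $t^\beta g(t) \leq L + 1$ for every $t \geq T$; equivalently, $0 \leq g(t) \leq \frac{L+1}{t^\beta}$ on $[T,+\infty[$.

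Next I would split $\int_{t_0}^{+\infty} g(t)\dt = \int_{t_0}^{T} g(t)\dt + \int_{T}^{+\infty} g(t)\dt$ and bound each piece. The first integral is finite since $g$ is continuous, hence bounded, on the compact interval $[t_0, T]$. For the tail, the pointwise bound above together with $\beta > 1$ gives $\int_{T}^{+\infty} g(t)\dt \leq (L+1)\int_{T}^{+\infty} t^{-\beta}\dt = \frac{(L+1)\,T^{1-\beta}}{\beta - 1} < +\infty$, where I may assume $T > 0$ (enlarging $T$ if necessary, which is harmless). Adding the two finite contributions yields $\int_{t_0}^{+\infty} g(t)\dt < +\infty$, as claimed.

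There is no genuine obstacle here: the argument is a routine $\varepsilon$-$T$ estimate followed by comparison with $\int t^{-\beta}\dt$. The only points worth a line of care are that $T$ should be taken $\geq t_0$ (and $>0$) so the splitting isolates the behaviour at infinity, and that the hypothesis $\beta > 1$ is exactly what makes the tail integral converge — for $\beta \leq 1$ the conclusion would fail (e.g. $g(t) = 1/t$).
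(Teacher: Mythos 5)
Your proof is correct and follows essentially the same route as the paper: apply the definition of the limit to get a pointwise bound $g(t)\leq \frac{L+\varepsilon}{t^\beta}$ past some finite time, split the integral there, and compare the tail with $\int t^{-\beta}\dt$, which converges since $\beta>1$. The only cosmetic difference is your choice of tolerance $1$ instead of an arbitrary $\varepsilon$, which changes nothing.
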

	The proof of Lemma \ref{lem::littleo_integrable} is postponed to Appendix \ref{app::proofsoflemmata}
	\begin{proof}[Proof of Theorem \ref{thm::fastrate}]
		Fix $ 0 < \varepsilon < \frac{1}{2} $. We show \eqref{eq::fastratealpha} by induction over $ n \in \mathbb{N}_{>1} $. The case $ n=1 $ holds from Proposition \ref{prop::rate_n=1}. Observe that for all $ n \geq 1 $, we have the following:
		\begin{equation}\label{eq::alphan}
			\alpha_{n} - 1 = 1 - \left(\frac{1}{2}\right)^{n-1} = \frac{1}{2}\left(2 - \left(\frac{1}{2}\right)^{n-2}\right) = \frac{\alpha_{n-1}}{2}.
		\end{equation}
		Let us now assume that there exists $ n \geq 1 $ such that:
		\begin{equation}\label{I.H}
			\tag{I.H.}
			E(t) = o\left(\frac{1}{t^{\alpha_n - \alpha_{n-1}\varepsilon}}\right) \iff \sqrt{E(t)} = o\left(\frac{1}{t^{\frac{\alpha_n}{2} - \frac{\alpha_{n-1}}{2}\varepsilon}}\right).
		\end{equation}
		To proceed by induction we need to show that \eqref{I.H} implies
		\begin{equation}\label{eq::inductiongoal}
			E(t) = o\left(\frac{1}{t^{\alpha_{n+1}-\alpha_n\varepsilon}}\right).
		\end{equation}
		To this aim, it is actually sufficient to prove that
		\begin{equation}\label{eq::EintegrableIS}
			\int_{t_0}^{+\infty}t^{\frac{\alpha_n}{2}-\alpha_n\varepsilon}E(t)\dt < + \infty.
		\end{equation}
		Indeed, according to Lemma \ref{lem::integrable_littleo} if \eqref{eq::EintegrableIS} holds, then
		\begin{equation*}
				0 = \lim_{t \to +\infty}t^{\frac{\alpha_n}{2}+1-\alpha_n\varepsilon}E(t) \overset{\eqref{eq::alphan}}{=} \lim_{t \to +\infty}t^{\alpha_{n+1}-\alpha_n\varepsilon}E(t),
		\end{equation*}
		which is equivalent to \eqref{eq::inductiongoal}.

		To show \eqref{eq::EintegrableIS} we multiply both sides of \eqref{eq::anchorfunctionode} by $ t^{\frac{\alpha_{n}}{2}-\alpha_n\varepsilon} $ and integrate from $ t_0 $ to $ T \geq t_0$:
		\begin{multline}\label{eq::hztintn}
			\int_{t_0}^{T}t^{\frac{\alpha_n}{2}-\alpha_n\varepsilon}E(t)\dt
			\leq \frac{3}{2}\int_{t_0}^{T}t^{\frac{\alpha_n}{2}-\alpha_n\varepsilon}\norm{\dot{x}(t)}^2\dt - \int_{t_0}^{T}t^{\frac{\alpha_n}{2}-\alpha_n\varepsilon}\ddot{h_z}(t)\dt \\ - \int_{t_0}^{T}t^{\frac{\alpha_n}{2}-\alpha_n\varepsilon}\sqrt{E(t)}\dot{h_z}(t)\dt.
		\end{multline}
		Now it only remains to show that the limit as $ T \to +\infty $ of each term on the right-hand side of \eqref{eq::hztintn} is uniformly bounded from above for $T\in[t_0,+\infty[$, which will imply that the left-hand side remains uniformly bounded as $ T \to +\infty $. This approach is similar to the proof of Theorem \ref{thm::convto0}, but the key equation \eqref{eq::dersqrtE} allows deducing further integrability results for $ \norm{\dot{x}(t)}^2 $ by using the induction hypothesis \eqref{I.H}, which is new and specific to the system \eqref{LD}. \\
		We first make the following observation using \eqref{eq::alphan}:
		\begin{equation*}
			\frac{t^{1+\varepsilon}}{t^{1-\frac{\alpha_n}{2} + \alpha_n\varepsilon}}\sqrt{E(t)} = t^{\frac{\alpha_n}{2} - \frac{\alpha_{n-1}}{2}\varepsilon}\sqrt{E(t)} \xrightarrow[t \to +\infty]{\eqref{I.H}} 0.
		\end{equation*}
		Therefore according to Definition \ref{def::biglittleo}, we have
		\begin{equation}\label{eq::integrability IS}
			\frac{1}{t^{1-\frac{\alpha_n}{2} + \alpha_n\varepsilon}}\sqrt{E(t)} = o\left(\frac{1}{t^{1+\varepsilon}}\right),
		\end{equation}
		and using Lemma \ref{lem::littleo_integrable} with $ g(t) = \frac{1}{t^{1-\frac{\alpha_n}{2} + \alpha_n\varepsilon}}\sqrt{E(t)}$ and $ \beta = 1+\varepsilon $,
		\begin{equation}\label{eq::integrable_arbitrary_n}
			\int_{t_0}^{+\infty} \frac{1}{t^{1-\frac{\alpha_n}{2} + \alpha_n\varepsilon}}\sqrt{E(t)}\dt<+ \infty .
		\end{equation}
		We now use \eqref{eq::integrable_arbitrary_n} to show the integrability of $ t^{\frac{\alpha_n}{2}-\alpha_n\varepsilon}\norm{\dot{x}(t)}^2 $, which shows the uniform boundedness of the first term on the right-hand side of \eqref{eq::hztintn} as $ T \to + \infty $. For any $ T \geq t_0 $:
		\begin{align*}
			&\int_{t_0}^{T} t^{\frac{\alpha_n}{2}-\alpha_n\varepsilon}\norm{\dot{x}(t)}^2\dt
			\\
			&\stackrel{\text{I.P., } \eqref{eq::dersqrtE}}{=} \left[-2t^{\frac{\alpha_n}{2}-\alpha_n\varepsilon}\sqrt{E(t)}\right]_{t_0}^{T} - \int_{t_0}^{T} \left(\frac{\alpha_n}{2}-\alpha_n\varepsilon\right)t^{\frac{\alpha_n}{2}-1-\alpha_n\varepsilon}(-2\sqrt{E(t)})\dt
			\\
			&= -2T^{\frac{\alpha_n}{2}-\alpha_n\varepsilon}\sqrt{E(T)} + 2t_0^{\frac{\alpha_n}{2}-\alpha_n\varepsilon}\sqrt{E(t_0)} +  2\left(\frac{\alpha_n}{2}-\alpha_n\varepsilon\right)\int_{t_0}^{T} \frac{1}{t^{1-\frac{\alpha_n}{2} + \alpha_n\varepsilon}}\sqrt{E(t)}\dt.
		\end{align*}
		The second term in the line above is constant, the last integral is finite as $ T \to +\infty $ by \eqref{eq::integrable_arbitrary_n} and
		\begin{equation*}
				-2\lim_{T \to +\infty}T^{\frac{\alpha_n}{2}-\alpha_n\varepsilon}\sqrt{E(T)} \leq 0.
		\end{equation*}
		Therefore the first term in \eqref{eq::hztintn} is bounded:
		\begin{equation}\label{speed integrable arbitrary n}
			\int_{t_0}^{+\infty} t^{\frac{\alpha_n}{2}-\alpha_n\varepsilon}\norm{\dot{x}(t)}^2\dt < + \infty.
		\end{equation}
		Looking at the second term in \eqref{eq::hztintn} we have:
		\begin{multline}\label{eq::lastterm}
			-\int_{t_0}^{T}t^{\frac{\alpha_n}{2}-\alpha_n\varepsilon}\ddot{h_z}(t)\dt \overset{I.P.}{=}-\left[t^{\frac{\alpha_n}{2}-\alpha_n\varepsilon}\dot{h_z}(t)\right]_{t_0}^{T} + \left(\frac{\alpha_n}{2}-\alpha_n\varepsilon\right)\int_{t_0}^{T}t^{\frac{\alpha_n}{2}- 1 -\alpha_n\varepsilon}\dot{h_z}(t)\dt \\
			\overset{I.P.}{=} -T^{\frac{\alpha_n}{2}-\alpha_n\varepsilon}\dot{h_z}(T) + t_0^{\frac{\alpha_n}{2}-\alpha_n\varepsilon}\dot{h_z}(t_0) + \left(\frac{\alpha_n}{2}-\alpha_n\varepsilon\right)\left[t^{\frac{\alpha_n}{2} -1-\alpha_n\varepsilon}h_z(t)\right]_{t_0}^{T} \\
			-\left(\frac{\alpha_n}{2}-\alpha_n\varepsilon\right)\left(\frac{\alpha_n}{2}-1-\alpha_n\varepsilon\right)\int_{t_0}^{T}t^{\frac{\alpha_n}{2}-2-\alpha_n\varepsilon}h_z(t)\dt.
		\end{multline}
		Recall that there exists a $ 0\leq M<+\infty $ such that$ \ \forall t\geq t_0  $ $ h_z(t)\in [0,M] $, since by Assumption~\ref{ass::minsol}, $ x $ is bounded. Due to $ - \ \underset{>0}{\underbrace{\left(\frac{\alpha_n}{2}-\alpha_n\varepsilon\right)}}\underset{<0}{\underbrace{\left(\frac{\alpha_n}{2}-1-\alpha_n\varepsilon\right)}} >0 $ and $ \frac{\alpha_{n}}{2} -2 -\alpha_{n}\varepsilon< -1 $, we can bound the limit of the last term in \eqref{eq::lastterm} as $ T \to +\infty $:
		\begin{multline*}
			-\left(\frac{\alpha_n}{2}-\alpha_n\varepsilon\right)\left(\frac{\alpha_n}{2}-1-\alpha_n\varepsilon\right)\int_{t_0}^{+\infty}t^{\frac{\alpha_n}{2}-2-\alpha_n\varepsilon}h_z(t)\dt \\
			\leq -M\left(\frac{\alpha_n}{2}-\alpha_n\varepsilon\right)\left(\frac{\alpha_n}{2}-1-\alpha_n\varepsilon\right)\int_{t_0}^{+\infty}t^{\frac{\alpha_n}{2}-2-\alpha_n\varepsilon}\dt < +\infty.
		\end{multline*}
		Again by boundedness of $ x $ and by the definition of $ \dot{h_z} $ we get:
		\begin{equation*}
			\begin{split}
				-\lim_{T \to +\infty}T^{\frac{\alpha_n}{2}-\alpha_n\varepsilon}\dot{h_z}(T) &\stackrel{\text{C.S.}}{\leq}  \lim_{T \to +\infty}T^{\frac{\alpha_n}{2}-\alpha_n\varepsilon}\norm{x(T)-z}\norm{\dot{x}(T)} \\
				&\leq \sqrt{M} \lim_{T \to +\infty}T^{\frac{\alpha_n}{2}-\alpha_n\varepsilon}\norm{\dot{x}(T)} = 0.
			\end{split}
		\end{equation*}
		Indeed,
		\begin{multline*}
			0\overset{\text{\eqref{I.H}}}{=}\lim_{t \to +\infty}t^{\alpha_n - \alpha_{n-1}\varepsilon}E(t) = \lim_{t \to +\infty}t^{\alpha_n - \alpha_{n-1}\varepsilon}\left(f(x(t))-f^\star + \frac{1}{2}\norm{\dot{x}(t)}^2\right) \\
			\geq \lim_{t \to +\infty}t^{\alpha_n - \alpha_{n-1}\varepsilon}\norm{\dot{x}(t)}^2 \geq 0,
		\end{multline*}
		hence:
		\begin{align*}
			\lim_{t \to +\infty}t^{\alpha_n - \alpha_{n-1}\varepsilon}\norm{\dot{x}(t)}^2 = 0
			\iff& \lim_{t \to +\infty}t^{\frac{\alpha_n}{2} - \frac{\alpha_{n-1}}{2}\varepsilon}\norm{\dot{x}(t)} = 0 \\
			\Rightarrow \lim_{t \to +\infty}t^{\frac{\alpha_n}{2} - \frac{\alpha_{n-1}}{2}\varepsilon - \varepsilon}\norm{\dot{x}(t)} = 0
			\overset{\eqref{eq::alphan}}{\iff}& \lim_{t \to +\infty}t^{\frac{\alpha_n}{2} - \alpha_{n}\varepsilon}\norm{\dot{x}(t)} = 0.
		\end{align*}
		To conclude that the limit $ T \to + \infty $ of the right-hand side of \eqref{eq::lastterm} is bounded from above, we observe that two of the three remaining terms are constant and
		\begin{equation*}
				\lim_{T \to +\infty} \left(\frac{\alpha_n}{2}-\alpha_n\varepsilon\right) T^{\frac{\alpha_n}{2} -1-\alpha_n\varepsilon}h_z(T) = 0,
		\end{equation*}
		since $ \frac{\alpha_n}{2} -1-\alpha_n\varepsilon < 0 $ and $ h_z $ is bounded. We finish the proof by bounding the third term of \eqref{eq::hztintn}. For $ T \geq t_0 $ we have:\\
		\begin{multline}\label{eq::third,partial}
			-\int_{t_0}^{T}t^{\frac{\alpha_n}{2}-\alpha_n\varepsilon}\sqrt{E(t)}\dot{h_z}(t)\dt \overset{I.P.,\eqref{eq::dersqrtE}}{=} -\left[t^{\frac{\alpha_n}{2}-\alpha_n\varepsilon}\sqrt{E(t)}h_z(t)\right]_{t_0}^{T} \\
			+ \int_{t_0}^{T}\left(\left({\frac{\alpha_n}{2}-\alpha_n\varepsilon}\right)t^{{\frac{\alpha_n}{2}-1-\alpha_n\varepsilon}}\sqrt{E(t)}- \frac{1}{2}t^{\frac{\alpha_n}{2}-\alpha_n\varepsilon}\norm{\dot{x}(t)}^2\right)h_z(t)\dt
			\\
			\hspace{-3.5cm}
			= -T^{\frac{\alpha_n}{2}-\alpha_n\varepsilon}\sqrt{E(T)}h_z(T) \ +t_0^{\frac{\alpha_n}{2}-\alpha_n\varepsilon}\sqrt{E(t_0)}h_z(t_0)
			\\+ \left({\frac{\alpha_n}{2}-\alpha_n\varepsilon}\right)\int_{t_0}^{T}t^{\frac{\alpha_n}{2}-1-\alpha_n\varepsilon}\sqrt{E(t)}h_z(t)\dt
			- \frac{1}{2}\int_{t_0}^{T}t^{\frac{\alpha_n}{2}-\alpha_n\varepsilon}\norm{\dot{x}(t)}^2h_z(t)dt.
		\end{multline}
		Only the third term on the right-hand side of \eqref{eq::third,partial} is neither constant nor non-positive. Using again the boundedness of $ h_z $, it holds that:
		\begin{equation*}
			\int_{t_0}^{+\infty}t^{\frac{\alpha_n}{2}-1-\alpha_n\varepsilon}\sqrt{E(t)}h_z(t)\dt  \leq M\int_{t_0}^{+\infty}t^{\frac{\alpha_n}{2}-1-\alpha_n\varepsilon}\sqrt{E(t)}\dt \overset{\eqref{eq::integrable_arbitrary_n}}{<} + \infty.
		\end{equation*}
		Hence the limit as $ T \to +\infty $ of the third term of \eqref{eq::hztintn} is bounded from above as well.
		Therefore all limits as $ T \to +\infty $ of the terms on the right-hand side of \eqref{eq::hztintn} are bounded from above, hence \eqref{eq::EintegrableIS} holds. By induction, we conclude that \eqref{eq::fastratealpha} holds for all $ n \in \mathbb{N}_{\geq 1} $.
	\end{proof}
	\begin{remark}\label{rem::comparisonhyperbolic}
		The induction step performed in Theorem \ref{thm::fastrate} resembles the proof of \cite[Lemma 3.10]{cabot2012hyperbolic} (then generalized in \citep{may2015long}). Yet, the result therein is not directly applicable to \eqref{LD} because it requires the knowledge of an asymptotic lower-bound on the damping, which we do not know a priori. Furthermore, \citet{cabot2012hyperbolic} showed how to iteratively repeat the application of this result but their proof relies on a strong integrability statement on $\norm{\dot{x}(t)}$ that may not hold in general. The specific nature of our system \eqref{LD} allows us to deduce this integrability statement instead of assuming it.
	\end{remark}


	\subsection{Drawbacks of the approach}

	Now that we presented the main benefits of using a Lyapunov function as damping in \eqref{LD}, we discuss some drawbacks of our approach, starting with the following remark.

	\begin{remark}\label{rem::fstar}
		Our system \eqref{LD} makes use of the optimal value $f^\star$. The latter might be unknown in general but is known in several practical cases such as over-determined regression with squared loss, or empirical risk minimization of some over-parameterized machine learning problems \citep{cybenko1989approximation}. We refer to \citep{boyd2003subgradient} for more examples and further discussion.
		All the results above do not require the knowledge of $f^\star$, only its existence. However Algorithm~\ref{algo::LD} presented in Section~\ref{sec::exp} is only practical when $f^\star$ is known.
	\end{remark}

	The second drawback of our approach is that we can  lower bound the convergence rate of our damping $ \sqrt{E(t)} $. This not surprising as the bound actually matches the sufficient condition from \cite{cabot2009long} mentioned in the introduction.
	\begin{proposition}\label{prop::notfaster}
	The convergence rate of the damping   $ \sqrt{E} $ of the dynamical system \eqref{LD} \emph{cannot} be asymptotically faster than $\frac{1}{t}$, \textit{i.e.}, there exists no $ \alpha > 1 $ such that $\sqrt{E(t)} = o\left(\frac{1}{t^\alpha}\right)$ for a non-constant bounded solution $  $ of \eqref{LD}.
	\end{proposition}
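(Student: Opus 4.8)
The idea is to extract a quantitative lower bound $\sqrt{E(t)}\gtrsim 1/t$ directly from the Lyapunov identity \eqref{eq::derE}; the statement then follows immediately, since for $\alpha>1$ one has $t^{\alpha}\sqrt{E(t)}\to+\infty$, which precludes $\sqrt{E(t)}=o(1/t^{\alpha})$. I note that the argument only uses Assumption~\ref{ass::f} and the non-constancy of the solution (not even boundedness or $\argmin f\neq\emptyset$).

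First I would record that a non-constant solution satisfies $E(t_0)>0$. Indeed, if $E(t_0)=0$ then $\dot x_0=0$ and $f(x_0)=f^{\star}=\inf_{\mathcal H}f$, so $\nabla f(x_0)=0$ by the first-order optimality condition; hence the constant map $t\mapsto x_0$ solves \eqref{LD}, and by uniqueness (Theorem~\ref{thm::existence}) it is \emph{the} solution, contradicting non-constancy.

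The crux is a single differential inequality. Since $f\geq f^{\star}$ we have $\norm{\dot x(t)}^{2}\leq 2E(t)$, so \eqref{eq::derE} gives
\[
\frac{\diff E(t)}{\dt}=-\sqrt{E(t)}\,\norm{\dot x(t)}^{2}\geq -2\,E(t)^{3/2}.
\]
On any interval where $E>0$, the function $E^{-1/2}$ is $C^{1}$ (Lemma~\ref{lem::derEanddamping}) and the inequality rearranges to $\frac{\diff}{\dt}E(t)^{-1/2}\leq 1$. Integrating from $t_0$, $E(t)^{-1/2}\leq E(t_0)^{-1/2}+(t-t_0)$, i.e.\ $E(t)\geq\bigl(E(t_0)^{-1/2}+t-t_0\bigr)^{-2}$ for as long as $E$ remains positive. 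A short continuation argument closes the loop: this lower bound is itself positive on every bounded time interval, so $E$ can never vanish, and the bound therefore holds for all $t\geq t_0$.

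Taking square roots yields $\sqrt{E(t)}\geq\bigl(E(t_0)^{-1/2}+t-t_0\bigr)^{-1}$, whence $\lim_{t\to+\infty}t^{\alpha}\sqrt{E(t)}=+\infty$ for every $\alpha>1$ — in particular $\sqrt{E(t)}$ is not $o(1/t^{\alpha})$. The only step needing a little care is the continuation argument, which guarantees that dividing by $\sqrt{E}$ stays legitimate for all time; but this is forced by the differential inequality itself, so no further appeal to existence/uniqueness theory is required. (One may also remark that the resulting bound $\sqrt{E(t)}\gtrsim 1/t$ is exactly what makes $\int_{t_0}^{+\infty}\sqrt{E(t)}\,\dt=+\infty$, matching Cabot's sufficient condition recalled in the introduction.)
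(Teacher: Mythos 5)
Your proof is correct, but it takes a genuinely different route from the paper. The paper argues by contradiction: assuming $\sqrt{E(t)}=o(1/t^{\alpha})$ with $\alpha>1$, Lemma~\ref{lem::littleo_integrable} gives $\int_{t_0}^{+\infty}\sqrt{E(t)}\,\dt<+\infty$; then, following Cabot's computation, the inequality $\frac{\diff E}{\dt}+2\sqrt{E}\,E\geq 0$ is multiplied by the integrating factor $e^{2\int_{t_0}^{t}\sqrt{E(s)}\ds}$ to yield $E(t)\geq e^{-2\int_{t_0}^{t}\sqrt{E(s)}\ds}E(t_0)$, which contradicts $E(t)\to 0$ from Theorem~\ref{thm::convto0} — so the paper's argument leans on Assumption~\ref{ass::minsol} (nonempty $\argmin$ and boundedness) and implicitly on $E(t_0)>0$. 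You instead extract a direct quantitative bound: from \eqref{eq::derE} and $\norm{\dot x}^{2}\leq 2E$ you get $\frac{\diff E}{\dt}\geq -2E^{3/2}$, hence $\frac{\diff}{\dt}E^{-1/2}\leq 1$ wherever $E>0$ and $\sqrt{E(t)}\geq\bigl(E(t_0)^{-1/2}+t-t_0\bigr)^{-1}$ for all $t$, after the (correct) continuation argument ruling out $E$ vanishing; your preliminary step that non-constancy forces $E(t_0)>0$ via uniqueness is also sound, and makes explicit a point the paper leaves implicit. What your route buys is a stronger, self-contained statement — an explicit $1/t$ lower bound on $\sqrt{E}$ valid under Assumption~\ref{ass::f} alone, without boundedness, without $\argmin_{\mathcal H}f\neq\emptyset$, and without invoking Theorem~\ref{thm::convto0} or Lemma~\ref{lem::littleo_integrable}; what the paper's route buys is that it reuses the machinery already in place and directly mirrors the sufficient-condition discussion of Cabot recalled in the introduction, at the price of a non-quantitative conclusion obtained only by contradiction.
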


	\begin{remark}
		Proposition~\ref{prop::notfaster} expresses that $E$ can never vanish faster than $o(\frac{1}{t^2})$.
		However, this does not mean that the convergence rate of $ f(x(t))-f^\star $ can never be faster than the worst-case $o(\frac{1}{t^2})$; one can sometimes obtain faster convergence of function values, as we will observe in the numerical experiments.
	\end{remark}

	\begin{proof}[Proof of Proposition~\ref{prop::notfaster}]
	Assume that $ \sqrt{E(t)} = o\left(\frac{1}{t^\alpha}\right) $ for some $ \alpha > 1 $, then by Lemma~\ref{lem::littleo_integrable},
	\begin{equation}\label{eq::notfasterAss}
		\int_{t_0}^{\infty}\sqrt{E(t)}\dt < + \infty.
	\end{equation}
	Let us make a similar computation as in \cite[Prop. 3.5]{cabot2009long}:
	\begin{equation}\label{eq::notfasterCalc}
		\begin{split}
			\frac{\diff E(t)}{\dt} + 2\sqrt{E(t)}E(t) &\stackrel{\eqref{eq::derE}}{=} - \sqrt{E(t)}\norm{\dot{x}(t)}^2 + 2 \sqrt{E(t)}\left(f(x(t))- f^\star +\frac{1}{2} \norm{\dot{x}(t)}^2\right) \\
			&= 2 \sqrt{E(t)}(f(x(t))-f^\star) \geq 0.
		\end{split}
	\end{equation}
	Multiply \eqref{eq::notfasterCalc} by $ e^{2\int_{t_0}^{t}\sqrt{E(s)}\ds} > 0 $ to obtain
	\begin{equation}\label{eq::notfasterCalc2}
		\begin{split}
			0 \leq e^{2\int_{t_0}^{t}\sqrt{E(s)}\ds} \frac{\diff E(t)}{\dt} + 2 	\sqrt{E(t)}e^{2\int_{t_0}^{t}\sqrt{E(s)}\ds}E(t)
			= \frac{\diff }{\dt}\left[e^{2\int_{t_0}^{t}\sqrt{E(s)}\ds}E(t)\right].
		\end{split}
	\end{equation}
	Now integrate \ref{eq::notfasterCalc2} from $ t_0 $ to $ t $:
	\begin{equation*}
		\begin{split}
			0 &\leq e^{2\int_{t_0}^{t}\sqrt{E(s)}\ds}E(t)- E(t_0) \\
			\iff  E(t) &\geq e^{-2\int_{t_0}^{t}\sqrt{E(s)}\ds}E(t_0).
		\end{split}
	\end{equation*}
	This implies that
	\begin{equation*}
		\lim_{t \to \infty}E(t) \geq e^{-2\int_{t_0}^{\infty}\sqrt{E(s)}\ds}E(t_0) \overset{\eqref{eq::notfasterAss}}{>} 0,
	\end{equation*}
	which contradicts Proposition \ref{thm::convto0}, since $\displaystyle \lim_{t \to \infty} E(t) = 0.$
	Therefore, we conclude that
	\begin{align}
		\int_{t_0}^{\infty}\sqrt{E(t)}\dt = \infty,
	\end{align}
	and by Lemma \ref{lem::littleo_integrable} the limit of $ t^\alpha\sqrt{E(t)} $ for $ t \to + \infty $ does not exist or is not finite for any $ \alpha > 1 $, hence $ E(t) \neq o\left(\frac{1}{t^\alpha}\right)$ for $\alpha > 1$.
	\end{proof}


	\section{Algorithmic case and Numerical Experiments}\label{sec::exp}
	\subsection{Practical algorithms from \texorpdfstring{\eqref{LD}}{LD}}
	We first detail how we discretize \eqref{LD}.  We use an explicit discretization with fixed step-size $ \sqrt{s} > 0 $: for $ k \in \mathbb{N} $ we approximate the solution $x$ of \eqref{LD} at times $ t_k = k \sqrt{s} $ and define  $x_k\eqdef x(t_k)$.
	We use the approximations $ \dot{x}(t) \approx \frac{x_{k}-x_{k-1}}{\sqrt{s}} $ and $ \ddot{x}(t) \approx \frac{x_{k+1}-2x_k + x_{k-1}}{s} $.
	We also define the discrete version of the damping accordingly by first defining
	\begin{equation}\label{eq::discretelyapunov}
		E_k \eqdef f(x_{k})-f^\star + \frac{1}{2}\norm{\frac{x_k-x_{k-1}}{\sqrt{s}}}^2
	\end{equation}
	and our damping then reads $\gamma(x_{k},x_{k-1})\eqdef \sqrt{E_k}$.
	Using this in \eqref{LD} we then propose the following discretization scheme for all $ k \in \mathbb{N} $:
	\begin{equation}\label{eq::general discretization}
		\begin{split}
		\frac{x_{k+1}-2x_{k}+x_{k-1}}{s} + \gamma(x_{k},x_{k-1}) \frac{x_{k}-x_{k-1}}{\sqrt{s}} + \nabla f(y_k) = 0 \\
		\iff x_{k+1} = x_k + \left(1-\sqrt{s}\gamma(x_{k},x_{k-1})\right)\left[x_{k} - x_{k-1}\right] - s\nabla f(y_k),
		\end{split}
	\end{equation}
	where the gradient $\nabla f$ is evaluated at $y_k =  x_k + \left(1-\sqrt{s}\gamma(x_{k},x_{k-1})\right)\left[x_{k} - x_{k-1}\right] $, in the same fashion as it is done in NAG. One can optionally rather evaluate the gradient at $x_k$.

	We call our algorithm LYDIA, for \textbf{LY}apunov \textbf{D}amped \textbf{I}nertial \textbf{A}lgorithm, it is summarized in Algorithm~\ref{algo::LD} and publicly available in python\footnote{\url{https://github.com/camcastera/lydia}}.\\
	\begin{algorithm}[H]
		\SetKwInOut{Input}{input}
		\Input{$x_0, x_{-1} \in \mathbb{R}^n$, step-size $ s > 0 $, $ k_{\max} \in \mathbb{N} $}
		\For{$ k = 1 $ to $k_{\mathrm{max}}$}{
			$ y_k \gets  x_k + \left(1-\frac{\sqrt{E_k}}{\sqrt{E_0}}\right)\left[x_{k} - x_{k-1}\right] $ \;

			$ x_{k+1} \gets  y_{k} - s\nabla f(y_{k})$}
		\caption{LYDIA\label{algo::LD}}
	\end{algorithm}
	Note that compared to \eqref{eq::general discretization}, we actually scaled the damping term $ \sqrt{s}\sqrt{E_k} $ by $ \sqrt{s}\sqrt{E_0} $ in Algorithm~\ref{algo::LD}.
	This scaling is optional but improves numerical stability\footnote{An alternative to scaling is choosing $s$ so that $\sqrt{s}\sqrt{E_0}\leq 1$.} as it ensures that the coefficient in front of $ \left[x_{k} - x_{k-1}\right] $ remains non-negative. This fact is a consequence of the following convergence theorem.
\begin{theorem}\label{thm::LYDIAconv}
	Let $ f $ be a continuously differentiable convex function whose gradient is Lipschitz continuous with constant $ L > 0 $. Let $ \left(x_k,y_k\right)_{k \in \mathbb{N}} $ be the sequence generated by Algorithm \ref{algo::LD}, where $ s \leq  \frac{1}{L} $. Then, for all $ k \in \mathbb{N} $,
	\renewcommand{\theenumi}{{(\roman{enumi})}}%
	\begin{enumerate}
		\itemsep0em
		\item $(E_k)_{k\in\mathbb{N}}$ is non-increasing, \label{item::Ek}
		\item $f(x_k)$ converges and $\lim_{k \to +\infty} \Vert x_k-x_{k-1}\Vert = 0$, \label{item::f}
		\item $\lim_{k \to +\infty} \Vert \nabla f(x_k)\Vert = 0$. \label{item::grad}
	\end{enumerate}
	If furthermore $\mathcal{H}$ has finite dimension and $(x_k)_{k \in \mathbb{N}}$ admits a bounded sub-sequence, then $\lim_{k \to +\infty} f(x_k) = f^\star$ and $\lim_{k \to +\infty} E_k = 0$.
\end{theorem}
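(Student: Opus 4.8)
The plan is to mirror, in the discrete setting, the Lyapunov analysis of Lemma~\ref{lem::derEanddamping} and Theorem~\ref{thm::convto0}. Introduce the shorthand $v_k \eqdef \frac{x_k - x_{k-1}}{\sqrt{s}}$ and $\beta_k \eqdef 1 - \frac{\sqrt{E_k}}{\sqrt{E_0}}$ (we may assume $E_0 > 0$, otherwise $x_0$ already minimizes $f$ and Algorithm~\ref{algo::LD} produces a sequence stationary at a minimizer). Then $y_k = x_k + \beta_k \sqrt{s}\, v_k$, and since $x_{k+1} = y_k - s\nabla f(y_k)$, the velocities obey the recursion $v_{k+1} = \beta_k v_k - \sqrt{s}\,\nabla f(y_k)$. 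The first step is a one-step energy inequality obtained by combining: (a) the descent lemma for $L$-smooth functions applied at $y_k$, $f(x_{k+1}) \le f(y_k) - \left(s - \tfrac{Ls^2}{2}\right)\norm{\nabla f(y_k)}^2$; (b) convexity of $f$ at $y_k$ (the first-order inequality already used for Theorem~\ref{thm::convto0}), $f(y_k) \le f(x_k) + \beta_k \sqrt{s}\,\scalar{\nabla f(y_k)}{v_k}$; and (c) the expansion $\tfrac12\norm{v_{k+1}}^2 = \tfrac12\beta_k^2\norm{v_k}^2 - \beta_k\sqrt{s}\,\scalar{\nabla f(y_k)}{v_k} + \tfrac{s}{2}\norm{\nabla f(y_k)}^2$ coming from the velocity recursion. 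Adding these, the cross terms $\beta_k\sqrt{s}\,\scalar{\nabla f(y_k)}{v_k}$ cancel exactly — this is the discrete counterpart of the chain-rule computation in Lemma~\ref{lem::derEanddamping} — and using $f(x_k) - f^\star = E_k - \tfrac12\norm{v_k}^2$ one obtains
\[
	E_{k+1} \le E_k - \frac{s}{2}\left(1 - Ls\right)\norm{\nabla f(y_k)}^2 - \frac12\left(1 - \beta_k^2\right)\norm{v_k}^2.
\]
Since $s \le \tfrac{1}{L}$, an induction on $k$ establishes simultaneously $E_k \le E_0$ (base case clear; the inductive step needs $\beta_k \in [0,1]$, which holds precisely because $0 \le E_k \le E_0$) and hence $\beta_k \in [0,1]$ for all $k$, so both subtracted terms are non-negative and $E_{k+1} \le E_k$. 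This proves \ref{item::Ek}, and in particular that the coefficient $1 - \frac{\sqrt{E_k}}{\sqrt{E_0}}$ appearing in Algorithm~\ref{algo::LD} is non-negative.

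For \ref{item::f}, $(E_k)$ is non-increasing and bounded below by $0$, hence converges to some $E_\infty \ge 0$, and telescoping the displayed inequality gives $\sum_{k} (1 - \beta_k^2)\norm{v_k}^2 < +\infty$. I then split into two cases. If $E_\infty > 0$, then $1 - \beta_k = \frac{\sqrt{E_k}}{\sqrt{E_0}} \ge \frac{\sqrt{E_\infty}}{\sqrt{E_0}} > 0$ and $1 + \beta_k \ge 1$, so $1 - \beta_k^2$ is bounded below by a positive constant, forcing $\sum_k \norm{v_k}^2 < +\infty$ and hence $\norm{v_k} \to 0$. If $E_\infty = 0$, then $0 \le \tfrac12\norm{v_k}^2 \le E_k \to 0$, so again $\norm{v_k} \to 0$. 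In either case $\norm{x_k - x_{k-1}} = \sqrt{s}\norm{v_k} \to 0$, and since $f(x_k) = f^\star + E_k - \tfrac12\norm{v_k}^2$, the sequence $f(x_k)$ converges (to $f^\star + E_\infty$).

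For \ref{item::grad}, rewriting $x_{k+1} = y_k - s\nabla f(y_k)$ with $y_k = x_k + \beta_k\sqrt{s}v_k$ and $x_{k+1} = x_k + \sqrt{s}v_{k+1}$ yields $\sqrt{s}\,\nabla f(y_k) = \beta_k v_k - v_{k+1}$, hence $\norm{\nabla f(y_k)} \le \tfrac{1}{\sqrt{s}}\left(\norm{v_k} + \norm{v_{k+1}}\right) \to 0$; combined with $\norm{x_k - y_k} = \beta_k\sqrt{s}\norm{v_k} \to 0$ and the $L$-Lipschitz continuity of $\nabla f$, this gives $\norm{\nabla f(x_k)} \le \norm{\nabla f(y_k)} + L\sqrt{s}\norm{v_k} \to 0$. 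Finally, under the extra hypothesis, pick a bounded subsequence $(x_{k_j})$; in finite dimension it has a convergent sub-subsequence $x_{k_{j_\ell}} \to x^\star$, and continuity of $f$ and $\nabla f$ together with the already-established limits give $f(x^\star) = \lim_k f(x_k) = f^\star + E_\infty$ and $\nabla f(x^\star) = \lim_k \nabla f(x_k) = 0$; convexity of $f$ then forces $f(x^\star) = \inf_{\mathcal{H}} f = f^\star$, so $E_\infty = 0$, which is exactly $\lim_k f(x_k) = f^\star$ and $\lim_k E_k = 0$.

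I expect the main obstacle to be the bookkeeping in the one-step energy inequality: the exact cancellation of the cross terms relies on evaluating the gradient at $y_k$ (not $x_k$) and on the precise relations among $x_k$, $y_k$, $x_{k+1}$, $v_k$ and $v_{k+1}$, and one must make the coupled induction ``$E_k \le E_0 \Rightarrow \beta_k \in [0,1] \Rightarrow E_{k+1} \le E_k \le E_0$'' watertight. Once the inequality is in place, the telescoping, the $E_\infty > 0$ versus $E_\infty = 0$ case split, and the finite-dimensional cluster-point argument are all routine.
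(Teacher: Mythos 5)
Your proof is correct, and it follows the same overall strategy as the paper: a one-step decrease inequality for $E_k$, the coupled induction ``$E_k\le E_0\Rightarrow$ momentum coefficient in $[0,1]\Rightarrow E_{k+1}\le E_k$'', telescoping, and a finite-dimensional cluster-point argument for the last claim. The differences are in the ingredients. You derive the energy inequality from the standard descent lemma plus convexity plus the velocity recursion $v_{k+1}=\beta_k v_k-\sqrt{s}\,\nabla f(y_k)$, whereas the paper starts from the refined descent lemma of \citep{attouch2022ravine}, which keeps the extra term $-\frac{s}{2}\norm{\nabla f(x_k)-\nabla f(y_k)}^2$; after the change of notation $\beta_k^{\text{yours}}=1-\sqrt{E_k/E_0}$, your velocity decrement $\frac12(1-\beta_k^2)\norm{v_k}^2$ coincides exactly with the paper's $\frac{1}{s}\sqrt{\tfrac{E_k}{E_0}}\bigl(1-\tfrac12\sqrt{\tfrac{E_k}{E_0}}\bigr)\norm{x_k-x_{k-1}}^2$ in \eqref{eq::Ekdecrease}, but your gradient remainder $\frac{s}{2}(1-Ls)\norm{\nabla f(y_k)}^2$ degenerates at $s=1/L$, so it cannot be used for item \ref{item::grad}. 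You correctly compensate for this: instead of the paper's route (summability of $\norm{\nabla f(x_k)-\nabla f(y_k)}^2$ combined with $\nabla f(y_k)\to0$ from the update rule), you get $\nabla f(y_k)\to0$ from $\sqrt{s}\,\nabla f(y_k)=\beta_k v_k-v_{k+1}$ and then pass to $\nabla f(x_k)$ via Lipschitz continuity of $\nabla f$ and $\norm{x_k-y_k}\le\sqrt{s}\norm{v_k}\to0$, which is equally valid (the Lipschitz constant is available by hypothesis) and arguably more elementary since it avoids the refined descent lemma altogether. Your explicit treatment of the case $E_0=0$ and the clean $E_\infty>0$ versus $E_\infty=0$ split in item (ii) are minor refinements of the paper's corresponding (slightly terser) arguments; no gaps.
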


	\begin{proof}
		For any $ x,y \in \mathcal{H} $ and $ 0<s \leq 1/L$, the following refined version of the descent lemma holds  \citep[Remark 5.2.]{attouch2022ravine}:
		\begin{equation}\label{eq::refineddescentlemma}
			f(y-s\nabla f(y)) \leq f(x) + \scalar{\nabla f (y)}{y-x} -\frac{s}{2} \norm{\nabla f(y)}^2 - \frac{s}{2}\norm{\nabla f(x) - \nabla f(y)}^2.
		\end{equation}
		Take $ y = y_k $ and $ x = x_k $ in \eqref{eq::refineddescentlemma} and subtract $ f^\star $ on both sides:
		\begin{multline}\label{eq::descentlemmapluggedin}
			f(x_{k+1}) - f^\star \leq f(x_k) - f^\star + \scalar{\nabla f (y_k)}{y_k-x_k} \\- \frac{s}{2} \norm{\nabla f(y_k)}^2 - \frac{s}{2}\norm{\nabla f(x_k) - \nabla f(y_k)}^2.
		\end{multline}
		We use the notation $\beta_k \stackrel{\mathrm{def}}{=} \sqrt{\frac{E_k}{E_0}}$. Observe that we have the following identities:
		\begin{align}
			\norm{x_{k+1}- x_{k}}^2 &= \norm{s \nabla f(y_k) - (y_k - x_k)}^2 \nonumber\\
			&= s^2\norm{\nabla f(y_k)}^2 -2s\scalar{\nabla f(y_k)}{y_k-x_k} + \norm{y_k-x_k}^2, \quad \text{and,} \label{eq::basic1} \\
			\norm{y_k-x_k}^2 &\stackrel{\eqref{algo::LD}}{=} \norm{\left(1-\beta_k\right)\left(x_k-x_{k-1}\right)}^2. \label{eq::basic2}
		\end{align}
		Therefore,
		\begin{align*}
			\langle\nabla f(y_k),y_k - &x_k\rangle = \frac{s}{2}\norm{\nabla f(y_k)}^2 - \frac{1}{2s} \norm{x_{k+1}- x_{k}}^2 + \frac{1}{2s}\norm{y_k-x_k}^2 \\
			=& \frac{s}{2}\norm{\nabla f(y_k)}^2 - \frac{1}{2s} \norm{x_{k+1}- x_{k}}^2  + \frac{1}{2s}\norm{\left(1-\beta_k\right)\left(x_k-x_{k-1}\right)}^2.
		\end{align*}
		Substitute this in \eqref{eq::descentlemmapluggedin} to get
		\begin{multline}\label{eq::generalk}
			f(x_{k+1}) - f^\star + \frac{1}{2s}\norm{x_{k+1}- x_{k}}^2 \leq f(x_k) - f^\star + \frac{1}{2s}\norm{\left(1-\beta_k\right)\left(x_k-x_{k-1}\right)}^2 \\ - \frac{s}{2}\norm{\nabla f(x_k) - \nabla f(y_k)}^2.
		\end{multline}
		Expanding $\left(1-\beta_k\right)^2= 1 - 2\beta_k + \beta_k^2$, we obtain
		\begin{multline}\label{eq::generalk2}
			f(x_{k+1}) - f^\star + \frac{1}{2s}\norm{x_{k+1}- x_{k}}^2 \leq f(x_k) - f^\star + \frac{1}{2s}\norm{x_k-x_{k-1}}^2 \\
			- \frac{1}{s}\beta_k\left(1-\frac{\beta_k}{2}\right)\norm{x_k-x_{k-1}}^2 - \frac{s}{2}\norm{\nabla f(x_k) - \nabla f(y_k)}^2.
		\end{multline}
		Using the definition of $E_k$ we finally have
		\begin{equation}\label{eq::Ekdecrease}
			E_{k+1} \leq E_k - \frac{1}{s}\beta_k\left(1-\frac{\beta_k}{2}\right)\Vert x_k-x_{k-1}\Vert^2 - \frac{s}{2}\norm{\nabla f(x_k) - \nabla f (y_k)}^2.
		\end{equation}

		\renewcommand{\theenumi}{{($\roman{enumi}$)}}%
		\begin{enumerate}
		\item Observe in the case $k=0$ that \eqref{eq::Ekdecrease} implies $E_1\leq E_0$, and more generally,
		whenever $E_k\leq E_0$ then $\beta_k=\sqrt{E_k/E_0}\leq 1$ so $E_{k+1}\leq E_k\leq E_0$. Therefore by induction, for all $k\in\mathbb{N}$, $E_{k+1}\leq E_k$ which proves $\ref{item::Ek}$, and $E_k$ converges.
		\item Summing \eqref{eq::Ekdecrease} from $0$ to $k-1$ yields
		\begin{equation}\label{eq::SumEk}
			\sum_{i=0}^{k-1} \frac{\beta_i}{s}\left(1-\frac{\beta_i}{2}\right)\Vert x_i-x_{i-1}\Vert^2 + \frac{s}{2}\norm{\nabla f(x_i) - \nabla f (y_i)}^2 \leq E_0-E_k.
		\end{equation}
		Since $E_0-E_k\leq E_0$, we deduce that $\sum_{i=0}^{+\infty}\sqrt{E_i}\Vert x_i-x_{i-1}\Vert^2<+\infty$ so $\lim_{k \to +\infty} \sqrt{E_k}\Vert x_k-x_{k-1}\Vert^2=0$. Since $E_k$ converges, either $\lim_{k \to +\infty} \Vert  x_k-x_{k-1}\Vert=0$ or $\lim_{k \to +\infty} E_k=0$, but the latter implies the former, so in every case $\lim_{k \to +\infty} \Vert  x_k-x_{k-1}\Vert=0$, so $\ref{item::f}$ holds.

		\item From \eqref{eq::SumEk}, we deduce $\lim_{k \to +\infty} \norm{\nabla f(x_k) - \nabla f (y_k)} = 0 $ similarly as above.
		Also remark, from the definition of Algorithm~\ref{algo::LD}, that $s\nabla f(y_k) = -(x_{k+1}-x_{k}) + (1-\beta_k)(x_k-x_{k-1})$, so the triangle inequality and $\ref{item::f}$ imply that $\lim_{k \to +\infty} \Vert  \nabla f(y_k)\Vert=0$.
		The two limits above yield the result.
		\end{enumerate}
		Finally, to prove the last claim, assume now that $(x_k)_{k\in\mathbb{N}}$ admits a bounded sub-sequence and that $\mathcal{H}$ is finite dimensional. Then by Bolzano-Weirstra\ss\ theorem $(x_k)_{k\in\mathbb{N}}$ has an accumulation point $\bar{x}\in\mathcal{H}$. By continuity of $\nabla f$ and $\ref{item::grad}$, $\nabla f(\bar{x})=0$ and the convexity of $f$ implies $f(\bar{x})=f^\star$. By $\ref{item::f}$, $(f(x_k))_{k\in\mathbb{N}}$ converges, so the limit is $f^\star$.
	\end{proof}

	\begin{remark}
		The results stated in Theorem~\ref{thm::LYDIAconv} are in analogy with those for \eqref{LD},
		 with the exception of the rate of convergence which will be addressed in a future work. The proof of Theorem~\ref{thm::LYDIAconv} shows how the study of the discrete algorithm is analogous to the ODE case, yet significantly more technical.
	\end{remark}

	\subsection{Numerical experiments}
		\begin{figure}[ht]
		\centering
		\includegraphics[width= 0.4\textwidth]{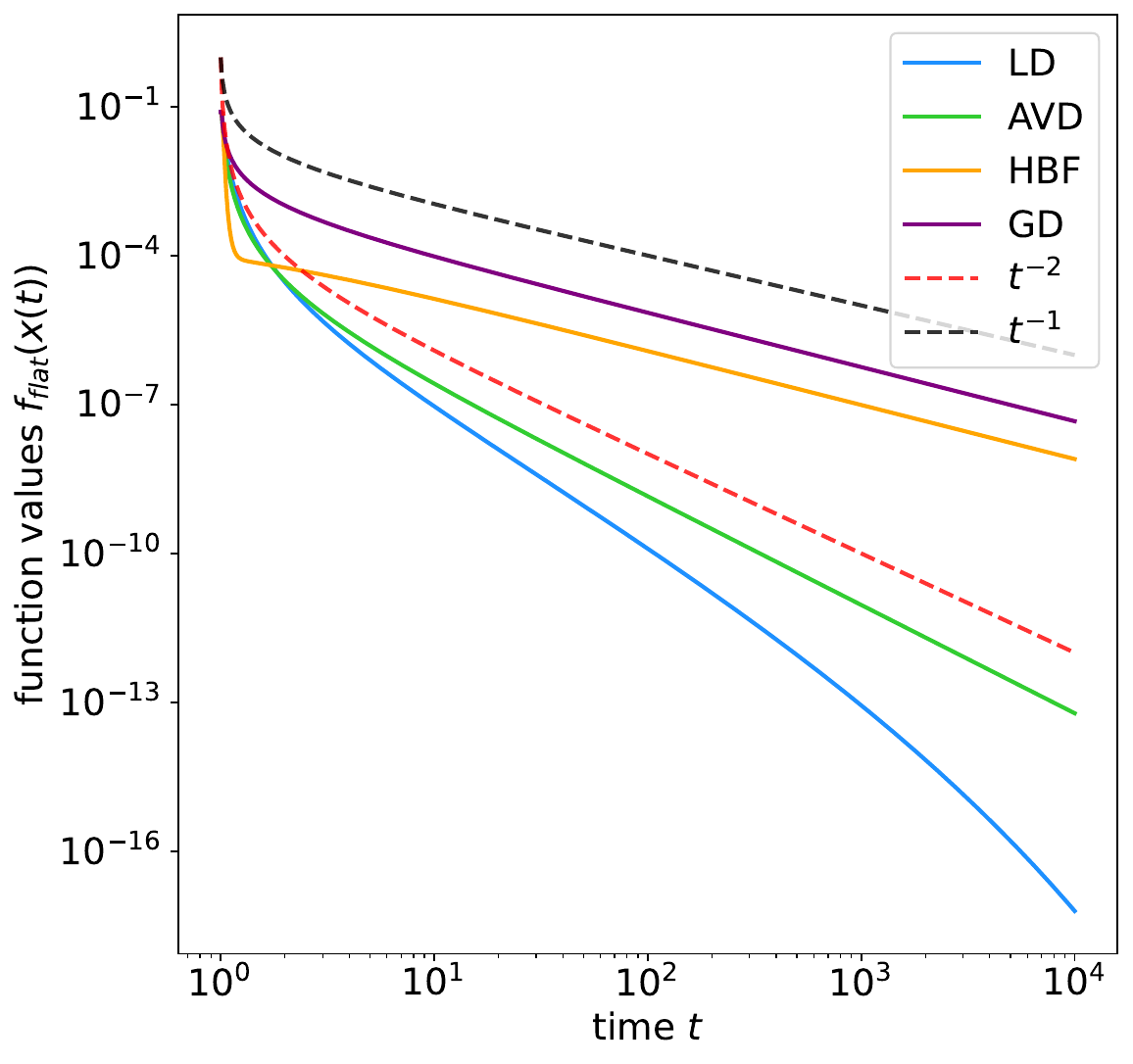}
		\includegraphics[width= 0.4\textwidth]{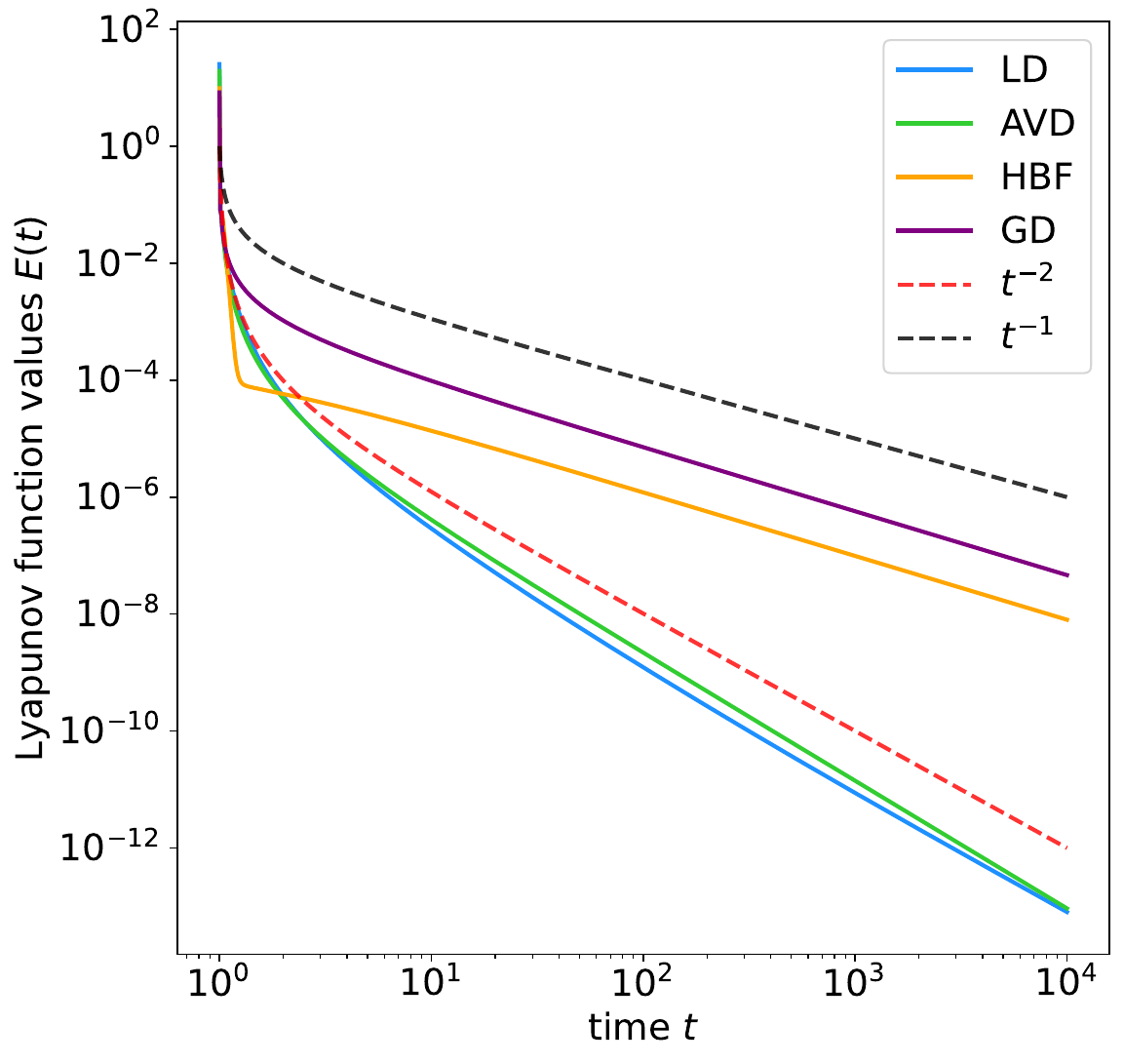}
		\caption{Rate of convergence of LD, AVD, HBF and GD on $ f_\text{flat} $. Left: Evolution of the function values. Right: Evolution of the Lyapunov function. \label{fig::rate_flatpoly}}
	\end{figure}
	\begin{figure}[ht]
		\centering
		\includegraphics[width= 0.4\textwidth]{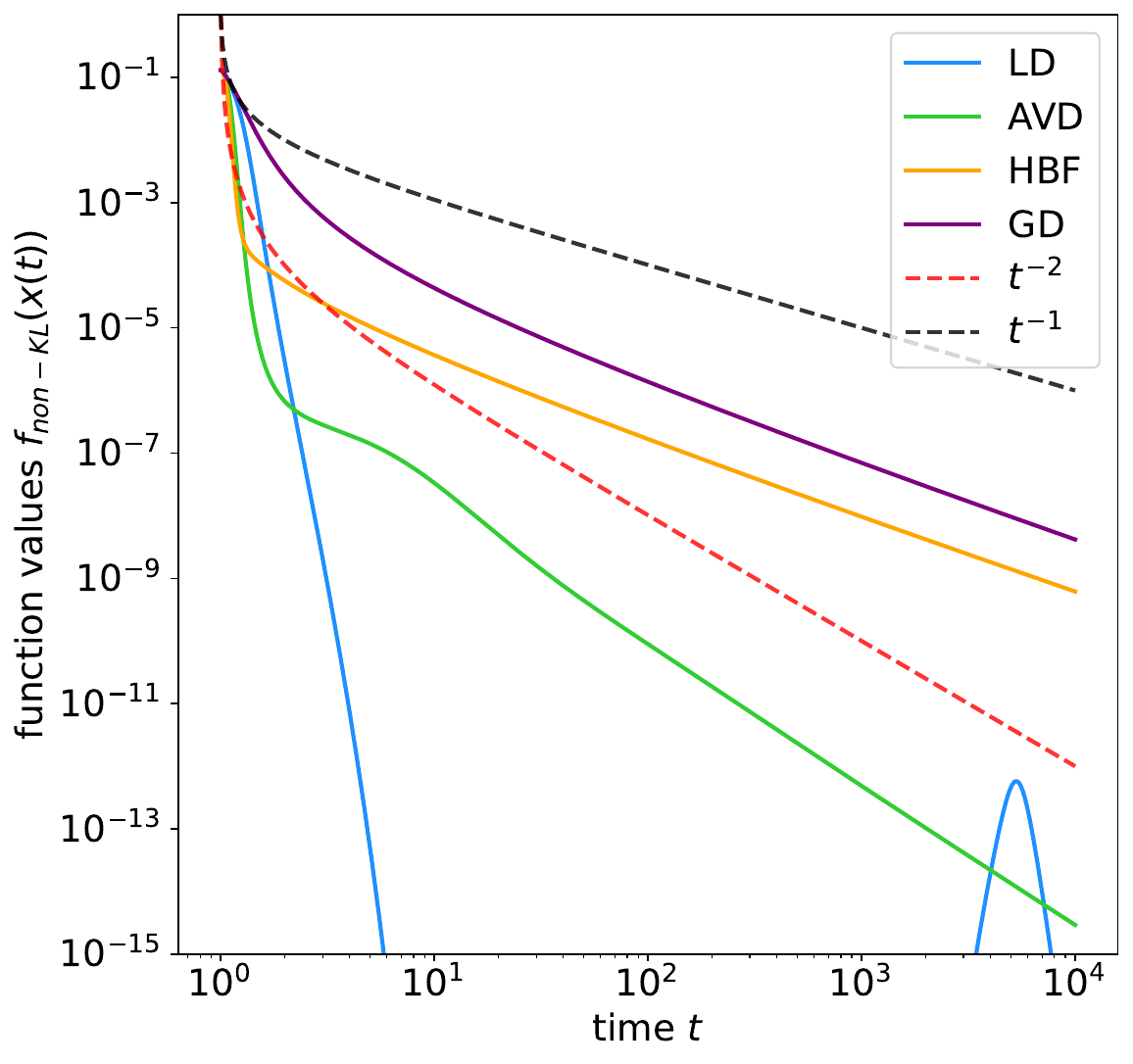}
		\includegraphics[width= 0.4\textwidth]{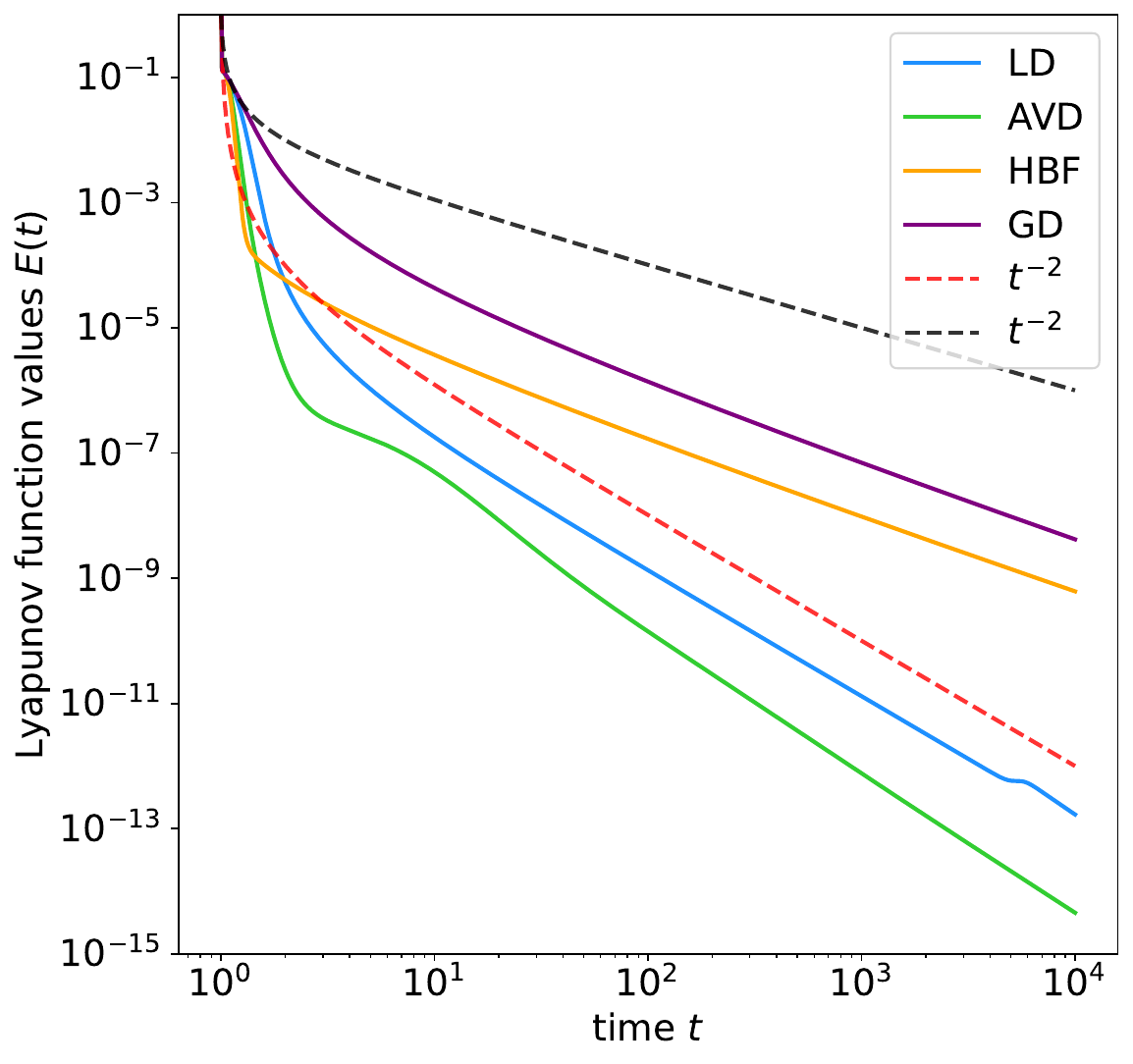}
		\caption{Rate of convergence of LD, AVD, HBF and GD on $ f_\text{non-KL} $. Left: Evolution of the function values. Right: Evolution of the Lyapunov function. \label{fig::rate_nonkl}}
	\end{figure}
	\begin{figure}[ht]
		\centering
		\includegraphics[width= 0.4\textwidth]{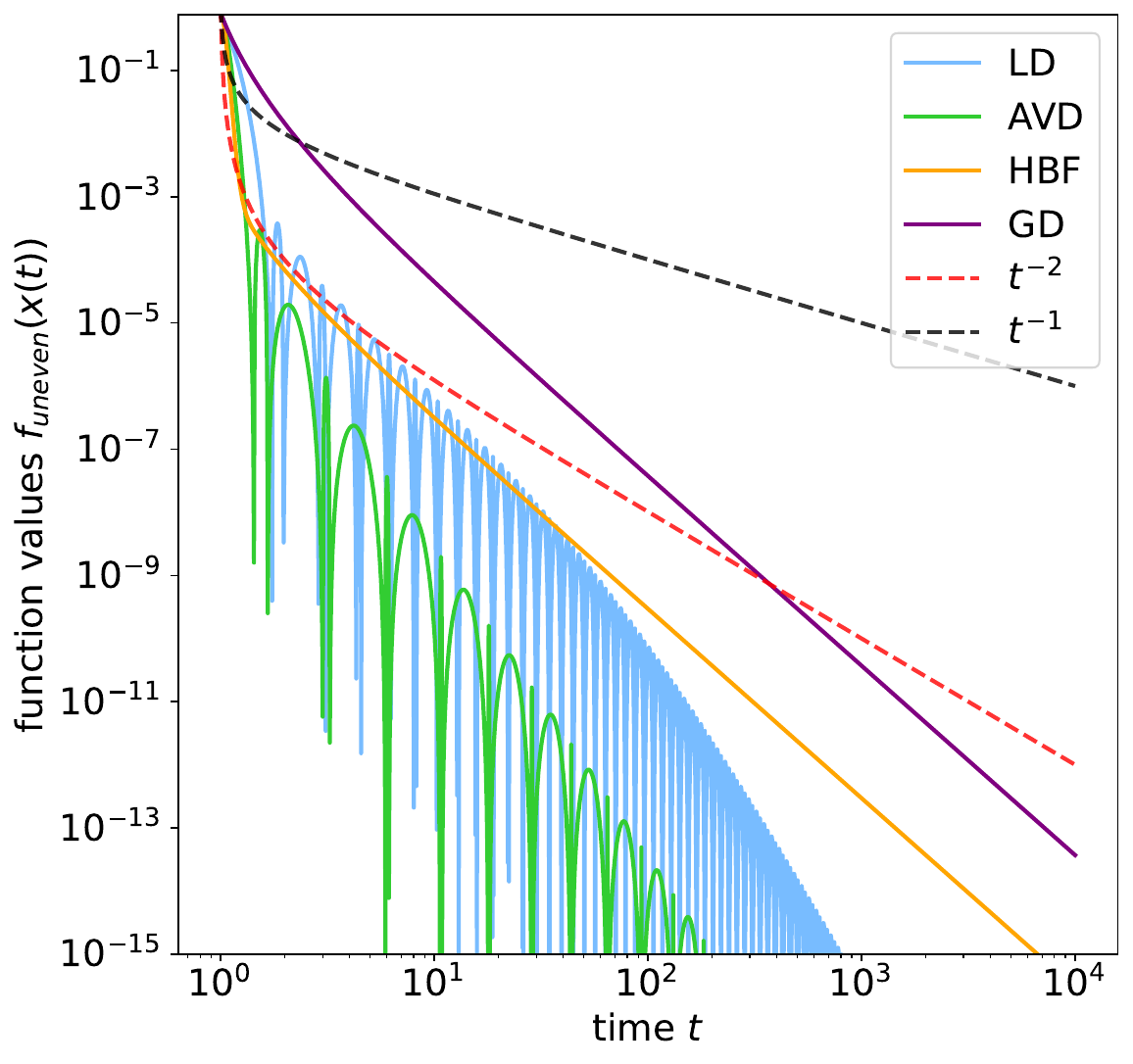}
		\includegraphics[width= 0.4\textwidth]{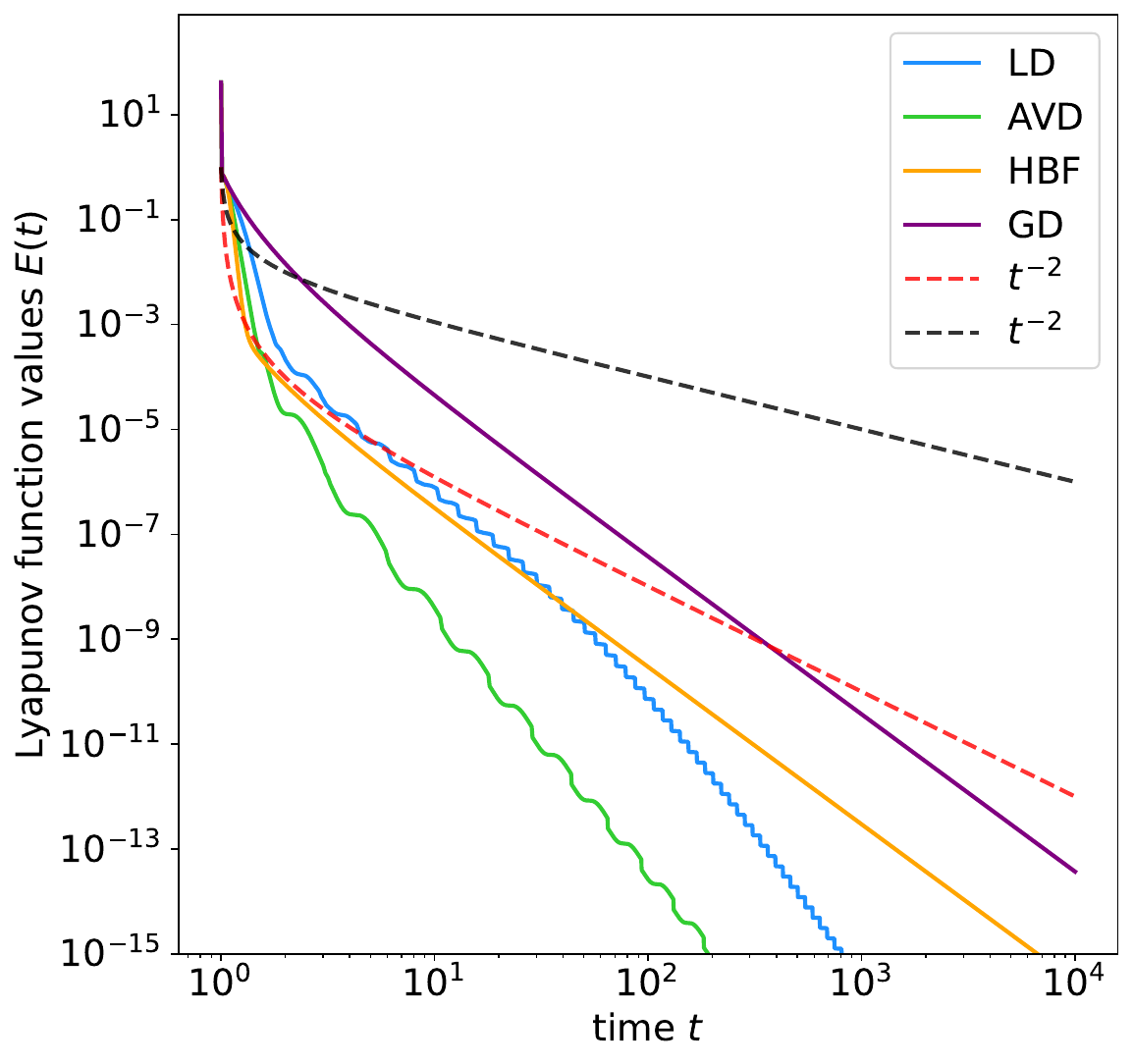}
		\caption{Rate of convergence of LD, AVD, HBF and GD on $ f_\text{uneven} $. Left: Evolution of the function values. Right: Evolution of the Lyapunov function. \label{fig::rate_uneven}}
	\end{figure}
	\begin{figure}[ht]
		\centering
		\includegraphics[width= 0.4\textwidth]{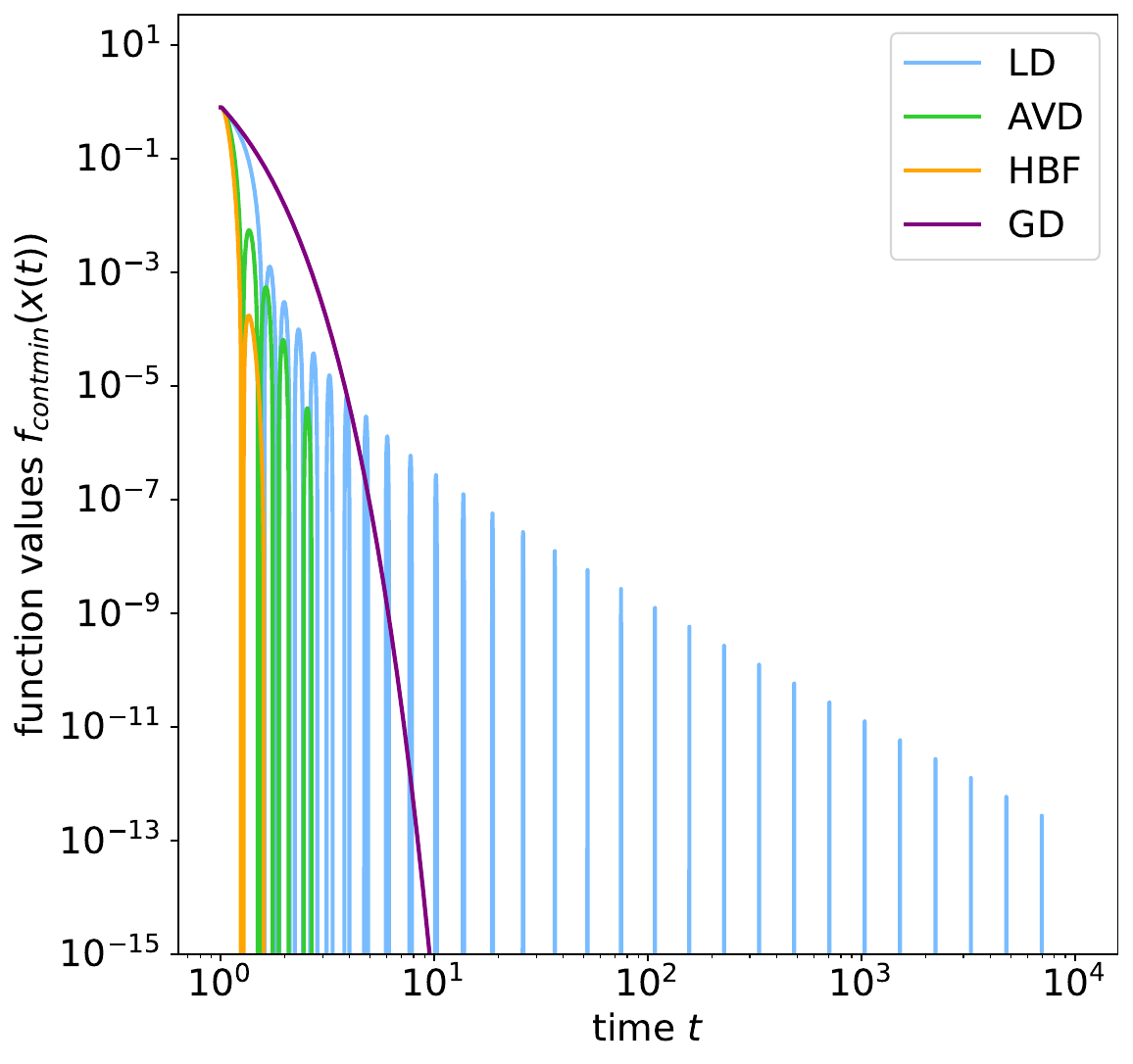}
		\includegraphics[width= 0.4\textwidth]{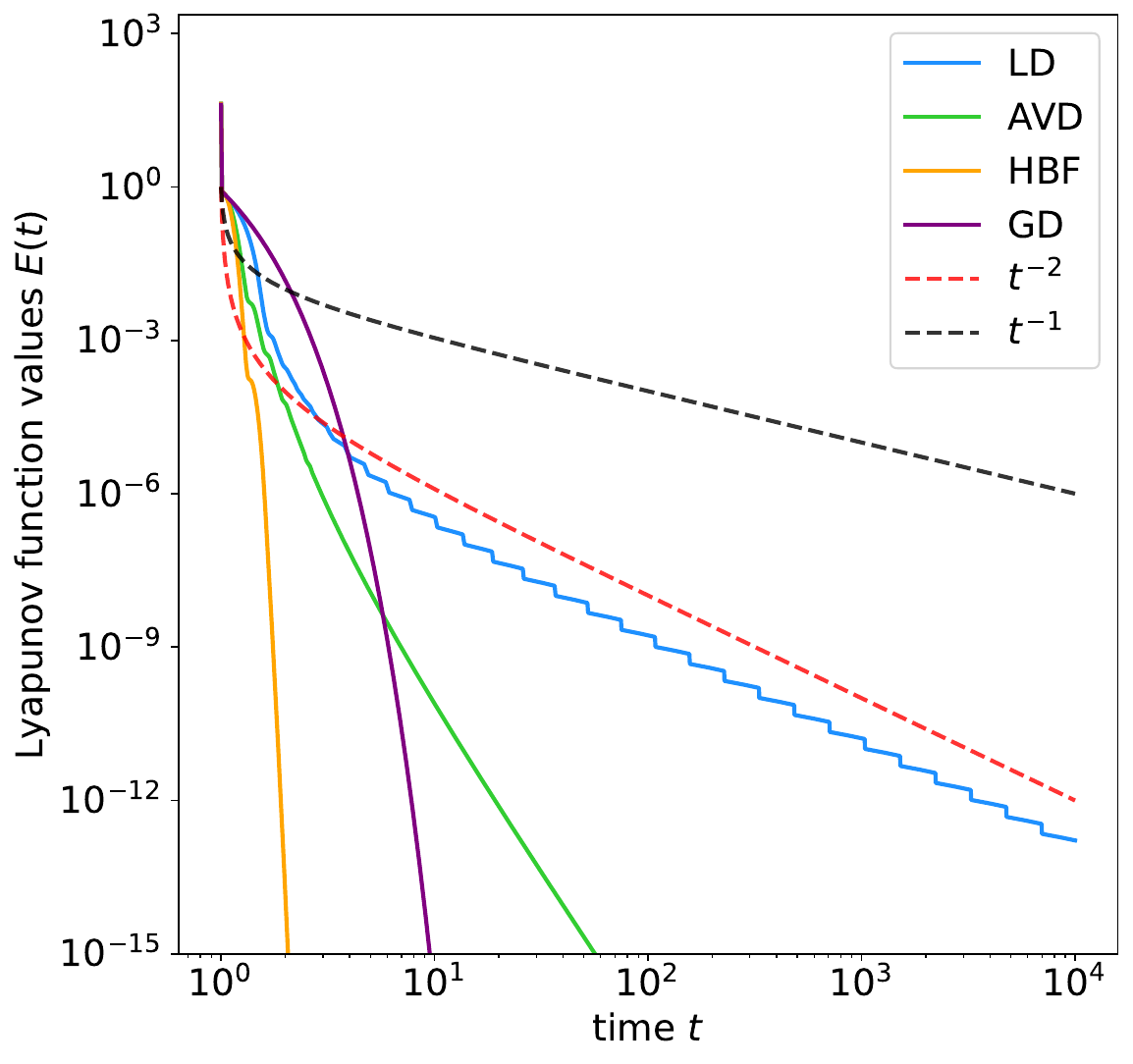}
		\caption{Rate of convergence of LD, AVD, HBF and GD on $ f_\text{contmin} $. Left: Evolution of the function values. Right: Evolution of the Lyapunov function.
			\label{fig::rate_cont_min}}
	\end{figure}
	\paragraph{Methodology}
	We evaluate the performance of our system and empirically corroborate Theorem~\ref{thm::fastratedelta}. We consider the following 1-dimensional test functions, defined for all $x\in\mathbb{R}$ by:
	\begin{align*}
		f_\text{flat}(x) &\eqdef x^{24},
		&f_\text{non-KL}(x) &\eqdef \begin{cases}
			0, & \text{if } x = 0; \\
			\exp\left({-\frac{1}{x^2}}\right), & \text{else},
		\end{cases} \\
		f_\text{uneven}(x) &\eqdef \begin{cases}
			x^3, & \text{if } x > 0; \\
			x^2, & \text{if } x \leq 0,
		\end{cases}
		&f_\text{contmin}(x) &\eqdef \begin{cases}
			(x-\varepsilon)^2, & \text{if }x>\varepsilon; \\
			(x+\varepsilon)^2, & \text{if }x<\varepsilon; \\
			0, &\text{else}.
		\end{cases}
	\end{align*}
	Since \eqref{LD} and \eqref{eq::AVD} are second-order ODEs that do not have closed-form solutions in general, we approximate them by using the LYDIA algorithm and NAG respectively, both  with small step-sizes. We take $a=3.1$ in NAG. We also consider Gradient Descent (GD) and the HBF algorithms mentioned in the introduction. For each test function\footnote{Some of these functions do not have a globally Lipschitz continuous gradient which could cause numerical instabilities. We overcome this by using small-enough step-sizes to ensure boundedness of the iterates of all the algorithms considered.}, we present the evolution of the optimality gap $f(x(t))-f^\star$ as a function of the time $t$. Since this value is not monotone and may oscillate heavily, we also display the evolution of the (discretized) Lyapunov function $E$.

	\paragraph{Results}
	The function $f_\text{flat}$ is a convex polynomial that is very flat around its minimizer. As discussed in \cite[Example 2.12]{attouch2015fast} such polynomials of large degree allow emphasizing the worst-case bounds on the rates of convergence. Indeed, we observe on the right of Figure~\ref{fig::rate_flatpoly} that the rate for GD and HBF is asymptotically close to $\mathcal{O}\left(\frac{1}{t}\right)$ while that of \ref{LD} and \eqref{eq::AVD} gets close to $\mathcal{O}\left(\frac{1}{t^2}\right)$ which empirically validates our main result Theorem~\ref{thm::fastratedelta}.

	Some convex functions are not strongly convex but possess the KL property which allows deriving faster rates of convergence (as done in the closed-loop setting by \citep{attouch2022fast}). Since Theorem~\ref{thm::fastratedelta} does not assume such a property, we evaluate the algorithms on a function that is known for not having the KL property on Figure~\ref{fig::rate_nonkl}. The results again match the theoretical ones.
	Similarly, properties such as $ f $ being even (\textit{i.e.}, $ f(-x) = f(x),\ \forall x$) or $ \argmin_{\mathcal{H}} f $ being a singleton can allow one to derive faster convergence results than for general convex functions. Our last-two test functions in Figures~\ref{fig::rate_uneven} and~\ref{fig::rate_cont_min} illustrate that the rate of \eqref{LD} also holds without such properties.

	Finally, while Figures~\ref{fig::rate_flatpoly} and~\ref{fig::rate_nonkl} evidence cases where the solution of  \eqref{LD} yields very fast minimization of function values, Figure~\ref{fig::rate_uneven} shows a case where \eqref{LD} is faster than GD and HBF but not as fast as \eqref{eq::AVD} and Figure~\ref{fig::rate_cont_min} shows a case where \eqref{LD} is behind all other methods. Nonetheless, in all cases observe that $E$ is non-increasing and that it exhibits the rate predicted by Theorem~\ref{thm::fastratedelta}.


	\section{Conclusions and Perspectives}\label{sec::conclusion}

	We introduced a new system \eqref{LD} which, as initially intended, is independent of the choice of the initial time $t_0$, unlike \eqref{eq::AVD}, and which does require the hyper-parameter $a>3$ compared to the latter, but however assumes knowledge of the optimal value $f^\star$.
	We showed that it is possible to design a closed-loop damping achieving near-optimal rates of convergence for convex optimization. The key ingredient is the coupling between the damping and the speed of convergence of the system. This yielded in particular the identity \eqref{eq::derE}, specific to \eqref{LD} and essential to adapt the proofs from the open-loop to the closed-loop setting. This coupling provides a new understanding on how to choose the damping term in second-order ODEs for optimization which may prove to be useful beyond the specific cases of \eqref{LD} and \eqref{eq::AVD}.
	While our work focuses on the ODE setting, we provided numerical experiments corroborating our theoretical results as well as a new practical first-order algorithm LYDIA, derived from \eqref{LD}, for which we also stated convergence results.

	As for future work, it remains open to know whether we can improve the rate from ``arbitrarily close to optimal'' to optimal (\textit{i.e.}, can we take $ \delta = 0 $ in Theorem~\ref{thm::fastratedelta}?).
	We also suspect that one can drop Assumption \ref{ass::minsol}-\ref{bounded} as it is not required for \eqref{eq::AVD} \citep{may2017asymptotic, jendoubi2015asymptotics}, but we could not yet do it in the closed-loop setting.
	On the algorithmic side, our experiments suggest that the rate of \eqref{LD} is transferred to the LYDIA algorithm, which remains to be shown. One might also consider other discretization schemes.
	Finally, it is worth investigating whether one can find systems and algorithms with the same properties that do not require the optimal value $f^\star$.
		
	\section*{Acknowledgment}
	C. Castera and P. Ochs are supported by the ANR-DFG joint project TRINOM-DS under number DFG-OC150/5-1.

	\FloatBarrier
	\vspace{0.5cm}
	\appendix
	\begin{flushleft}
		{\LARGE \textbf{Appendices}}
	\end{flushleft}


	\section{Proof of Existence and Uniqueness}\label{app::existence}
	This section is devoted to proving Theorem \ref{thm::existence}. We recall some result from the theory of ODEs.
	\begin{theorem}\cite[Theorem 4.2.6]{seifert2022evolutionary} (Classical Version of Picard--Lindelöf)\label{thm::piclind} \\
		Let $\mathcal{H}$ be a Hilbert space, let $\Omega \subseteq \mathbb{R} \times \mathcal{H}$ be open. Let $G: \Omega \rightarrow \mathcal{H}$ be continuous and let $\left(t_0, x_0\right) \in \Omega$. Assume that there exists $L \geq0$ such that for all $(t, x),(t, y) \in \Omega$ we have
		$$
		\norm{G(t, x)-G(t, y) } \leq L\norm{x-y} .
		$$
		Then, there exists $\delta>0$ such that the initial value problem
		$$
		\left\{\begin{array}{l}
			u^{\prime}(t)=G(t, u(t)) \quad\left(t \in\left(t_0, t_0+\delta\right)\right), \\
			u\left(t_0\right)=x_0,
		\end{array}\right.
		$$
		admits a unique continuously differentiable solution $u:\left[t_0, t_0+\delta\right[ \rightarrow \mathcal{H}$, which satisfies $(t, u(t)) \in \Omega$ for all $t \in\left[t_0, t_0+\delta\right[$.
	\end{theorem}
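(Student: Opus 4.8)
The plan is to prove this by the classical Picard iteration / Banach fixed-point argument, adapted to the infinite-dimensional setting. First I would localize: since $\Omega$ is open and $(t_0,x_0)\in\Omega$, choose $a,b>0$ with the cylinder $R \eqdef [t_0,t_0+a]\times \overline{B}(x_0,b) \subseteq \Omega$. The first genuine point---and the only place where infinite-dimensionality matters---is to bound $G$ on $R$: the closed ball $\overline{B}(x_0,b)$ is not compact, so I cannot invoke continuity plus compactness directly. Instead I use the global Lipschitz hypothesis together with continuity in $t$: for $(t,x)\in R$ one has $\norm{G(t,x)} \le \norm{G(t,x)-G(t,x_0)} + \norm{G(t,x_0)} \le Lb + \norm{G(t,x_0)}$, and $t\mapsto G(t,x_0)$ is continuous on the compact interval $[t_0,t_0+a]$, hence bounded; so $\sup_R \norm{G} \le M$ for some finite $M$. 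Then I set $\delta \eqdef \min\{a,\, b/M,\, 1/(2L)\}$, dropping the last term if $L=0$.

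Next I would set up the fixed-point problem. Let $Y$ be the set of continuous maps $u\colon[t_0,t_0+\delta]\to\mathcal H$ with $u(t_0)=x_0$ and $\sup_t\norm{u(t)-x_0}\le b$, equipped with the supremum norm; $Y$ is a closed subset of the Banach space $C([t_0,t_0+\delta],\mathcal H)$, hence a complete metric space. Define $\Phi\colon Y\to C([t_0,t_0+\delta],\mathcal H)$ by $(\Phi u)(t) \eqdef x_0 + \int_{t_0}^{t} G(s,u(s))\,\ds$, the integral being that of the continuous $\mathcal H$-valued map $s\mapsto G(s,u(s))$. Two checks: (i) $\Phi$ maps $Y$ into $Y$, since $\norm{(\Phi u)(t)-x_0}\le \int_{t_0}^{t}\norm{G(s,u(s))}\,\ds \le M(t-t_0)\le M\delta\le b$, and $\Phi u$ is continuous with $(\Phi u)(t_0)=x_0$; (ii) $\Phi$ is a contraction, because for $u,v\in Y$ one has $\norm{(\Phi u)(t)-(\Phi v)(t)}\le L\int_{t_0}^{t}\norm{u(s)-v(s)}\,\ds \le L\delta\,\norm{u-v}_\infty \le \tfrac12\norm{u-v}_\infty$. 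If one prefers to avoid the restriction $\delta\le 1/(2L)$, one replaces the sup norm by a Bielecki weighted norm $\norm{u}_\lambda \eqdef \sup_t e^{-\lambda(t-t_0)}\norm{u(t)}$ with $\lambda$ large, which makes $\Phi$ a contraction for any $\delta\le\min\{a,b/M\}$.

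By the Banach fixed-point theorem, $\Phi$ has a unique fixed point $u\in Y$, i.e. $u(t)=x_0+\int_{t_0}^{t}G(s,u(s))\,\ds$ for all $t\in[t_0,t_0+\delta]$. Since $s\mapsto G(s,u(s))$ is continuous, the fundamental theorem of calculus for Banach-valued functions shows $u$ is continuously differentiable with $u'(t)=G(t,u(t))$ and $u(t_0)=x_0$, and $(t,u(t))\in R\subseteq\Omega$; restricting to $[t_0,t_0+\delta[$ gives the asserted solution. For uniqueness I would argue directly from the ODE rather than from membership in $Y$: if $u_1,u_2$ are any two continuously differentiable solutions on $[t_0,t_0+\delta[$ with $(t,u_i(t))\in\Omega$, then both satisfy the integral equation, so $\phi(t)\eqdef\norm{u_1(t)-u_2(t)}$ is continuous, vanishes at $t_0$, and obeys $\phi(t)\le L\int_{t_0}^{t}\phi(s)\,\ds$; Grönwall's inequality then forces $\phi\equiv 0$. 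Here the \emph{global} Lipschitz bound over all of $\Omega$ is precisely what makes this estimate legitimate without first knowing the solutions stay in the cylinder.

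The main obstacle is the one already flagged: in an infinite-dimensional Hilbert space the bounded set $\overline{B}(x_0,b)$ is not compact, so the a priori bound on $G$ over the cylinder must come from the Lipschitz hypothesis plus continuity in $t$, not from compactness. Everything else is the textbook contraction-mapping argument together with standard facts about integration of continuous Banach-valued functions (linearity, the estimate $\norm{\int \cdot}\le\int\norm{\cdot}$, and the fundamental theorem of calculus).
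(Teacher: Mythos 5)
Your proof is correct. Note that the paper does not prove this statement at all: it is quoted as a known result from the cited reference (Seifert et al., Theorem 4.2.6) and used as a black box in Appendix A, so there is no in-paper argument to compare against. Your contraction-mapping proof is the standard one and is sound, and you correctly identify and handle the only point where the infinite-dimensional setting matters: the a priori bound $M$ for $G$ on the cylinder $[t_0,t_0+a]\times\overline{B}(x_0,b)$ cannot come from compactness of the ball, but follows from the Lipschitz hypothesis together with boundedness of the continuous map $t\mapsto G(t,x_0)$ on the compact interval. Your uniqueness step is also stated in the right generality: since the Lipschitz estimate holds on all of $\Omega$ and any admissible solution has its graph in $\Omega$, the Grönwall argument applies to arbitrary solutions, not only those staying in the cylinder, which is exactly what the theorem's uniqueness claim requires. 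The only cosmetic point is the degenerate case $M=0$ (where $b/M$ is undefined); replacing $M$ by $\max(M,1)$, or noting that the argument only needs $M\delta\le b$, removes it.
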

	\begin{definition}\cite[Definition 3.20]{philipnotes}\label{def::maxtime}
		Let $ \phi: I \to \mathbb{R}^n $, $ n \in \mathbb{N} $, be a solution to an ODE defined on some open interval $ I \subset \mathbb{R} $.
		\begin{enumerate}[label=(\alph*)]
			\item\label{it::cont} We say $ \phi $ admits an extension or continuation to the right (resp.\ to the left) if, and only if, there exists a solution $ \psi: J \to \mathbb{R}^n $ to the same ODE, defined on some open interval $ J \supset I $ such that $ \psi|_I = \phi $ and
			\begin{equation*}
				\sup J > \sup I \quad (\text{resp.\ } \inf J < \inf I)
			\end{equation*}
			An extension or continuation of $ \phi $ is a function that is an extension to the right or an extension to the left (or both).
			\item The solution $ \phi $ is called a \emph{maximal} solution if, and only if, it does not admit any extension in the sense defined in \ref{it::cont}.
		\end{enumerate} 
	\end{definition}
	\begin{theorem}\cite[Theorem 3.22]{philipnotes}\label{thm::maxtime}
		Every solution $ \phi_0: I_0 \to \mathbb{R}^n $ to some ODE, defined on an open interval $ I_0 \subset \mathbb{R} $, can be extended to a maximal solution of this ODE.
	\end{theorem}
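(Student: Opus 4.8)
The plan is to obtain the maximal extension by a Zorn's Lemma argument applied to the family of all solutions that prolong $\phi_0$. First I would introduce the set $\mathcal{S}$ of all solutions $\psi\colon J\to\mathbb{R}^n$ of the given ODE whose domain $J$ is an open interval containing $I_0$ and which satisfy $\psi|_{I_0}=\phi_0$; this set is nonempty because $\phi_0\in\mathcal{S}$. I would then equip $\mathcal{S}$ with the partial order $\psi_1\preceq\psi_2\iff J_1\subseteq J_2\text{ and }\psi_2|_{J_1}=\psi_1$; reflexivity and transitivity are immediate, and antisymmetry follows since $J_1\subseteq J_2$, $J_2\subseteq J_1$ force $J_1=J_2$ and then $\psi_2=\psi_2|_{J_1}=\psi_1$.

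The heart of the argument is verifying that every chain in $(\mathcal{S},\preceq)$ admits an upper bound. Given a nonempty chain $\mathcal{C}$, I would set $J^\ast\eqdef\bigcup_{\psi\in\mathcal{C}}J_\psi$, which is again an open interval since the domains are totally ordered by inclusion and all contain $I_0$, and define $\psi^\ast\colon J^\ast\to\mathbb{R}^n$ by $\psi^\ast(t)\eqdef\psi(t)$ whenever $t$ lies in the domain of some $\psi\in\mathcal{C}$. This is well defined precisely because $\mathcal{C}$ is totally ordered: any two members defined at $t$ are comparable, hence agree at $t$. Since solving the ODE is a purely \emph{local} condition, $\psi^\ast$ is again a solution (near each point it coincides with some member of $\mathcal{C}$), it restricts to $\phi_0$ on $I_0$, so $\psi^\ast\in\mathcal{S}$ and it dominates every element of $\mathcal{C}$; the empty chain is handled by $\phi_0$ itself. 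Zorn's Lemma then provides a $\preceq$-maximal element $\phi\in\mathcal{S}$, with some domain $I_\phi\supseteq I_0$.

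To finish I would argue that this $\phi$ is a maximal solution in the sense of Definition~\ref{def::maxtime}. If it were not, there would exist a solution $\psi\colon J\to\mathbb{R}^n$ with $J\supseteq I_\phi$ open, $\psi|_{I_\phi}=\phi$, and $\sup J>\sup I_\phi$ or $\inf J<\inf I_\phi$. But then $\psi$ also restricts to $\phi_0$ on $I_0$, because $I_0\subseteq I_\phi$ and $\psi|_{I_\phi}=\phi$, so $\psi\in\mathcal{S}$; moreover $\phi\preceq\psi$ and $\phi\neq\psi$ since their domains differ, contradicting the $\preceq$-maximality of $\phi$. Hence $\phi$ is the desired maximal solution, and since $\phi\in\mathcal{S}$ it extends $\phi_0$.

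I expect the main obstacle to be the chain step, specifically the well-definedness and solution property of the glued map $\psi^\ast$: these rely crucially on comparability within a chain, so that there is no ambiguity at overlapping times. This is also the reason one cannot simply take the union of the domains of \emph{all} solutions extending $\phi_0$ and glue them — absent a uniqueness statement for the ODE, two extensions could genuinely disagree — which is precisely why routing the proof through Zorn's Lemma, rather than an explicit construction of a maximal interval, is the clean approach. A minor additional care is needed to connect $\preceq$-maximality with Definition~\ref{def::maxtime}: one must observe that any prolongation of $\phi$ is automatically a prolongation of $\phi_0$ and hence already lies in $\mathcal{S}$.
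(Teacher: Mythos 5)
Your proof is correct and follows exactly the route the paper relies on: the theorem is cited from the literature without a written proof, and the paper's accompanying remark notes only that the argument rests on Zorn's Lemma, which is precisely your construction (extensions of $\phi_0$ ordered by restriction, chains glued via the local character of being a solution, a $\preceq$-maximal element shown to be maximal in the sense of Definition~\ref{def::maxtime}). Nothing further is needed.
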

	\begin{remark}
		Definition \ref{def::maxtime} is easily generalized to Hilbert spaces and Theorem~\ref{thm::maxtime} is valid for an ODE in an infinite dimensional space as well, since the proof relies on Zorn's Lemma.
	\end{remark}
	\begin{proof}[Proof of Theorem \ref{thm::existence}]
		We first reduce the system \eqref{LD} to an equivalent  first-order system:
		\begin{equation}\label{eq::firstorder}
			\frac{\diff }{\dt}\begin{pmatrix}
				x(t) \\
				\dot{x}(t)
			\end{pmatrix}
			= \begin{pmatrix}
				\dot{x}(t) \\
				-\sqrt{f(x(t))-f^\star+\frac{1}{2}\norm{\dot{x}(t)}^2}\dot{x}(t) - \nabla f(x(t))
			\end{pmatrix}
			\eqdef G(t,(x(t),\dot{x}(t))).
		\end{equation}
		We first show the claim in the case $ \argmin_{\mathcal{H}} f \neq \emptyset $ and later for the case $ \argmin_{\mathcal{H}} f = \emptyset $.

		First, if $\left(x_0,\dot{x}_0\right) \in \argmin_{\mathcal{H}} f \times \left\{0\right\}$, then $ (x(t),\dot{x}(t))  \equiv (x_0,0)$ is the unique global solution, since $ \ddot{x}(t)=0 $ for all $t\geq t_0$.
		Now assume that $\left(x_0,\dot{x}_0\right) \not\in \argmin_{\mathcal{H}} f \times \left\{0\right\}$ and let us check that for all $t\geq t_0$, the function
		$$ (u,v) \mapsto G(t,(u,v)) =  \begin{pmatrix}
				v \\
				-\sqrt{f(u)-f^\star+\frac{1}{2}\norm{v}^2}v(t) - \nabla f(u)
			\end{pmatrix}$$ is Lipschitz continuous on the open set
		\begin{equation*}
			\mathcal{A} \eqdef (\mathcal{H} \times \mathcal{H}) \setminus (\argmin_{\mathcal{H}} f \times \left\{0\right\}).
		\end{equation*}
		The first coordinate of $ G $ is clearly locally Lipschitz continuous w.r.t.\ $ (u,v) $ and therefore it suffices to check that the second coordinate of $ G $ is also locally Lipschitz continuous w.r.t.\ $ (u,v) $ on $ \mathcal{A} $.\\
		The square root function is locally Lipschitz continuous everywhere except at 0 and $ f(u)-f^\star+\frac{1}{2}\norm{v}^2\neq 0$ for all $(u,v)\in \mathcal{A} $. Therefore $(u,v)\in \mathcal{A} \mapsto \sqrt{f(u)-f^\star+\frac{1}{2}\norm{v}^2}v $ is locally Lipschitz continuous as a product of locally Lipschitz-continuous functions. Finally $ \nabla f $ is locally Lipschitz continuous by Assumption~\ref{ass::f}-(i) , hence $G$ is locally Lipschitz continuous on $\mathcal{A}$.
		We can now apply the Picard--Lindelöf Theorem \ref{thm::piclind} to any open and bounded neighborhood of the initial conditions $ (t_0,(x_0,\dot{x}_0)) $ to find some $ \delta > 0 $ and a unique solution $(x,\dot{x})$ to \eqref{eq::firstorder} on $ [t_0,t_0+\delta[ $.
		
		It now remains to show that the solution exists on $[t_0,+\infty[$. Let $ T \in\, ]t_0,+\infty] $ be the maximal time of existence (as stated in Theorem \ref{thm::maxtime}) of the solution $(x,\dot{x})$ of \eqref{eq::firstorder}. Assume that $ T < +\infty $, then the limits of $ x $ and $ \dot{x}(t) $ exist as $ t \to T $. Indeed, first note that the uniform boundedness of $ E $ implies that $ \dot{x} $ is bounded on $ [t_0,T[ $ and by \eqref{LD} $ \ddot{x} $ is bounded on $ [t_0,T[ $ as well. Now consider a Cauchy sequence $ \left(t_n\right)_{n\in \mathbb{N}} $ converging to $ T $ and let $ m,n \in \mathbb{N} $. Then by the mean value Theorem we have:
		\begin{equation*}
			\begin{split}
				\norm{x(t_n)-x(t_m)} &\leq \sup_{s \in ]t_0,T[}\norm{\dot{x}(s)}\left|t_n-t_m\right|, \text{ and} \\
				\norm{\dot{x}(t_n)-\dot{x}(t_m)} &\leq \sup_{s \in ]t_0,T[}\norm{\ddot{x}(s)}\left|t_n-t_m\right|.
			\end{split}
		\end{equation*}
		Therefore $ \left(x(t_n)\right)_{n\in \mathbb{N}} $ and $ \left(\dot{x}(t_n)\right)_{n\in \mathbb{N}} $ are Cauchy sequences which implies that  $ \displaystyle \lim_{t \to T}x(t) \text{ and } \lim_{t \to T}\dot{x}(t)$ exist.

		Then, if $ (x(T),\dot{x}(T)) \in \mathcal{A}$ then we can extend the solution by applying Picard--Lindelöf to an open and bounded neighborhood of $ \left((T,\left(x(T), \dot{x}(T)\right)\right) $ which contradicts the fact that $ T $ was the maximal time of existence.
		If $ (x(T),\dot{x}(T)) \in \partial \mathcal{A} \subset \argmin_{\mathcal{H}} f \times \left\{0\right\}$, where $\partial \mathcal{A}$ denotes the boundary of $\mathcal{A}$, then $ \ddot{x}(T) = 0 $ and we can extend the solution on $ [T,T+\delta[ $, for some $ \delta > 0 $, by taking $ (x(t),\dot{x}(t)) = (x(T),0) $. This is again a contradiction, therefore we necessarily have $ T = +\infty $.

		Finally, in the case where $ \argmin_{\mathcal{H}} f = \emptyset $, we see similarly to before that $ G(t,(u,v)) $ is locally Lipschitz continuous w.r.t.\ $ (u,v) $ everywhere on $ \mathcal{H} \times \mathcal{H}$. So the Picard--Lindelöf Theorem implies the existence of a unique local solution for any initial value, which we can again extend to a global solution on $ [t_0,+\infty[ $.
	\end{proof}
	\section{Missing Proofs of Lemmas}\label{app::proofsoflemmata}
	\begin{proof}[Proof of Lemma \ref{lem::convto0}]
		Assume that there exists $ c > 0 $ such that $\displaystyle \lim_{t\to+\infty}g(t) = c$, which implies the existence of $ t_1 \geq t_0 $ such that $ \forall t \geq t_1 $, $ g(t) \geq \frac{c}{2}$. Then
		$$ \int_{t_0}^{+\infty}g(t)\dt \geq \int_{t_1}^{+\infty}g(t)\dt \geq \int_{t_1}^{+\infty} \frac{c}{2}\dt = +\infty, $$ which contradicts the assumption that $\int_{t_0}^{+\infty}g(t)\dt < +\infty$.
	\end{proof}

	\begin{proof}[Proof of Lemma \ref{lem::integrable_littleo}] Denote the positive and negative parts of a function $ g $ by $$\left[g(\cdot)\right]_+ \eqdef \max(g(\cdot),0) \text{ and } \left[g(\cdot)\right]_- \eqdef \max(-g(\cdot),0) \geq 0,$$
		so that $\forall x\in\mathcal{H}$,
		 $$ g(x) = \left[g(x)\right]_+ - \left[g(x)\right]_-.$$
		Then note that
		\begin{equation}\label{eq::integrable_little0_1}
			\left[\left(t^{\alpha +1}E(t)\right)'\right]_+ \overset{\eqref{eq::derE}}{=}\left[(\alpha +1)t^\alpha E(t) - t^{\alpha+1}\sqrt{E(t)}\norm{\dot{x}(t)}^2\right]_+ = (\alpha+1)t^\alpha E(t),
		\end{equation}
		and that,
		\begin{equation}\label{eq::integrable_littleo_2}
			\begin{split}
				\int_{t_0}^{+\infty}\left(t^{\alpha+1}E(t)\right)'\dt &= \int_{t_0}^{+\infty}\left[\left(t^{\alpha +1}E(t)\right)'\right]_+ - \left[\left(t^{\alpha +1}E(t)\right)'\right]_-\dt \\
				&\leq \int_{t_0}^{+\infty}\left[\left(t^{\alpha +1}E(t)\right)'\right]_+\dt
				\overset{\eqref{eq::integrable_little0_1}}{=}  (\alpha+1)\int_{t_0}^{+\infty}t^\alpha E(t)\dt \overset{Ass.}{<} +\infty.
			\end{split}
		\end{equation}
		Therefore by definition of the improper integral, $ \displaystyle \lim_{t \to +\infty}t^{\alpha+1}E(t)$ admits a finite limit $l \in \mathbb{R}_{\geq 0} $.

		Now, assume that $ l>0 $, then for any $\varepsilon \in ]0,l[$ there exists $ t_1\geq t_0 $ such for all $ t \geq t_1 $ we have $ E(t) > \frac{l-\varepsilon}{t^{\alpha+1}} $.
		Then,
		\begin{equation*}
			\int_{t_0}^{+\infty}t^\alpha E(t)\dt \geq \int_{t_1}^{+\infty}t^\alpha E(t)\dt > \int_{t_1}^{+\infty} t^\alpha \frac{l-\varepsilon}{t^{\alpha +1}}\dt 
			= (l-\varepsilon) \int_{t_1}^{+\infty}\frac{1}{t}\dt = +\infty,
		\end{equation*}
		which contradicts the integrability assumption on $t^\alpha E(t)$.
	\end{proof}
	\begin{proof}[Proof of Lemma \ref{lem::littleo_integrable}]
		Let $l\eqdef \displaystyle \lim_{t \to \infty} t^\beta g(t)\geq0$. Then for any $ \varepsilon > 0 $ we can find $ t_1 \geq t_0 $ such that $\forall t \geq t_1$,
		\begin{equation*}\label{eq::littleo_integrable_1}
			g(t) \leq \frac{(l+\varepsilon)}{t^\beta} \ .
		\end{equation*}
		Therefore, $$ \int_{t_0}^{+\infty} g(t)\dt = \underset{\eqdef I< +\infty}{\underbrace{\int_{t_0}^{t_1}g(t)\dt}} + \int_{t_1}^{+\infty} g(t) \dt\leq
		I + (l+\varepsilon)\int_{t_1}^{+\infty}\frac{1}{t^\beta} \dt < +\infty,$$
		since $ \beta > 1 $, which proves the result.
	\end{proof}

	\FloatBarrier
	\bibliographystyle{plainnat}
	\bibliography{biblio.bib}

\begin{thebibliography}{35}
\providecommand{\natexlab}[1]{#1}
\providecommand{\url}[1]{\texttt{#1}}
\expandafter\ifx\csname urlstyle\endcsname\relax
  \providecommand{\doi}[1]{doi: #1}\else
  \providecommand{\doi}{doi: \begingroup \urlstyle{rm}\Url}\fi

\bibitem[Adly et~al.(2006)Adly, Attouch, and Cabot]{adly2006}
Samir Adly, Hedy Attouch, and Alexandre Cabot.
\newblock Finite time stabilization of nonlinear oscillators subject to dry
  friction.
\newblock In \emph{Nonsmooth Mechanics and Analysis}, pages 289--304, 2006.

\bibitem[Alecsa et~al.(2021)Alecsa, L{\'a}szl{\'o}, and
  Pin{\c{t}}a]{alecsa2021implicit}
Cristian~Daniel Alecsa, Szil{\'a}rd~Csaba L{\'a}szl{\'o}, and Titus
  Pin{\c{t}}a.
\newblock An extension of the second order dynamical system that models
  {N}esterov’s convex gradient method.
\newblock \emph{Applied Mathematics \& Optimization}, 84:\penalty0 1687--1716,
  2021.

\bibitem[Alvarez et~al.(2002)Alvarez, Attouch, Bolte, and Redont]{alvarez2002}
Felipe Alvarez, Hedy Attouch, J{\'e}r{\^o}me Bolte, and Patrick Redont.
\newblock A second-order gradient-like dissipative dynamical system with
  {H}essian-driven damping: Application to optimization and mechanics.
\newblock \emph{Journal de math{\'e}matiques pures et appliqu{\'e}es},
  81\penalty0 (8):\penalty0 747--779, 2002.

\bibitem[Attouch and Cabot(2017)]{attouch2017timedependentviscosity}
Hedy Attouch and Alexandre Cabot.
\newblock Asymptotic stabilization of inertial gradient dynamics with
  time-dependent viscosity.
\newblock \emph{Journal of Differential Equations}, 263\penalty0 (9):\penalty0
  5412--5458, 2017.

\bibitem[Attouch and Fadili(2022)]{attouch2022ravine}
Hedy Attouch and Jalal Fadili.
\newblock From the {R}avine method to the {N}esterov method and vice versa: a
  dynamical system perspective.
\newblock \emph{SIAM Journal on Optimization}, 32\penalty0 (3):\penalty0
  2074--2101, 2022.

\bibitem[Attouch et~al.(2015)Attouch, Chbani, Peypouquet, and
  Redont]{attouch2015fast}
Hedy Attouch, Zaki Chbani, Juan Peypouquet, and Patrick Redont.
\newblock Fast convergence of inertial dynamics and algorithms with asymptotic
  vanishing damping.
\newblock \emph{Mathematical Programming}, 168:\penalty0 123--175, 2015.

\bibitem[Attouch et~al.(2016{\natexlab{a}})Attouch, Alves, and
  Svaiter]{attouch2016dynamic}
Hedy Attouch, Maicon~Marques Alves, and Benar~F Svaiter.
\newblock A dynamic approach to a proximal-{N}ewton method for monotone
  inclusions in {H}ilbert spaces, with complexity ${O}(1/n^{2})$.
\newblock \emph{Journal of Convex Analysis}, 23:\penalty0 139--180,
  2016{\natexlab{a}}.

\bibitem[Attouch et~al.(2016{\natexlab{b}})Attouch, Peypouquet, and
  Redont]{attouch2016hessian}
Hedy Attouch, Juan Peypouquet, and Patrick Redont.
\newblock Fast convex optimization via inertial dynamics with {H}essian driven
  damping.
\newblock \emph{Journal of Differential Equations}, 261\penalty0 (10):\penalty0
  5734--5783, 2016{\natexlab{b}}.

\bibitem[Attouch et~al.(2019)Attouch, Chbani, and
  Riahi]{attouch2019subcritical}
Hedy Attouch, Zaki Chbani, and Hassan Riahi.
\newblock Rate of convergence of the {N}esterov accelerated gradient method in
  the subcritical case $\alpha \leq$ 3.
\newblock \emph{ESAIM: Control, Optimisation and Calculus of Variations},
  25:\penalty0 2, 2019.

\bibitem[Attouch et~al.(2022)Attouch, Bo{\c{t}}, and Csetnek]{attouch2022fast}
Hedy Attouch, Radu~Ioan Bo{\c{t}}, and Ern{\"o}~Robert Csetnek.
\newblock Fast optimization via inertial dynamics with closed-loop damping.
\newblock \emph{Journal of the European Mathematical Society}, 25\penalty0
  (5):\penalty0 1985--2056, 2022.

\bibitem[Attouch et~al.(2023)Attouch, Bo{\c{t}}, and
  Nguyen]{attouch2023timescaling}
Hedy Attouch, Radu~Ioan Bo{\c{t}}, and Dang-Khoa Nguyen.
\newblock Fast convex optimization via closed-loop time scaling of gradient
  dynamics.
\newblock \emph{arXiv preprint arXiv:2301.00701}, 2023.

\bibitem[Aujol and Dossal(2017)]{aujol2017subcritical}
Jean-Fran\c{c}ois Aujol and Charles Dossal.
\newblock Optimal rate of convergence of an {ODE} associated to the fast
  gradient descent schemes for $b >0$.
\newblock \emph{{HAL} Preprint hal-01547251}, 2017.

\bibitem[Aujol et~al.(2019)Aujol, Dossal, and Rondepierre]{aujol2019optimal}
Jean-Francois Aujol, Charles Dossal, and Aude Rondepierre.
\newblock Optimal convergence rates for {N}esterov acceleration.
\newblock \emph{SIAM Journal on Optimization}, 29\penalty0 (4):\penalty0
  3131--3153, 2019.

\bibitem[Bena\"{\i}m et~al.(2005)Bena\"{\i}m, Hofbauer, and Sorin]{sorin2005}
Michel Bena\"{\i}m, Josef Hofbauer, and Sylvain Sorin.
\newblock Stochastic approximations and differential inclusions.
\newblock \emph{SIAM Journal on Control and Optimization}, 44\penalty0
  (1):\penalty0 328--348, 2005.

\bibitem[Boyd et~al.(2003)Boyd, Xiao, and Mutapcic]{boyd2003subgradient}
Stephen Boyd, Lin Xiao, and Almir Mutapcic.
\newblock Subgradient methods.
\newblock \emph{Lecture notes of EE392o, Stanford University}, 2003.

\bibitem[Cabot and Frankel(2012)]{cabot2012hyperbolic}
Alexandre Cabot and Pierre Frankel.
\newblock Asymptotics for some semilinear hyperbolic equations with
  non-autonomous damping.
\newblock \emph{Journal of Differential Equations}, 252\penalty0 (1):\penalty0
  294--322, 2012.

\bibitem[Cabot et~al.(2009)Cabot, Engler, and Gadat]{cabot2009long}
Alexandre Cabot, Hans Engler, and S{\'e}bastien Gadat.
\newblock On the long time behavior of second order differential equations with
  asymptotically small dissipation.
\newblock \emph{Transactions of the American Mathematical Society},
  361\penalty0 (11):\penalty0 5983--6017, 2009.

\bibitem[Chambolle and Dossal(2015)]{chambolle2015convergence}
Antonin Chambolle and Charles Dossal.
\newblock On the convergence of the iterates of the “fast iterative
  shrinkage/thresholding algorithm”.
\newblock \emph{Journal of Optimization theory and Applications}, 166:\penalty0
  968--982, 2015.

\bibitem[Cybenko(1989)]{cybenko1989approximation}
George Cybenko.
\newblock Approximation by superpositions of a sigmoidal function.
\newblock \emph{Mathematics of control, signals and systems}, 2\penalty0
  (4):\penalty0 303--314, 1989.

\bibitem[Grobman(1959)]{grobman1959homeomorphism}
David~M Grobman.
\newblock Homeomorphism of systems of differential equations.
\newblock \emph{Doklady Akademii Nauk SSSR}, 128\penalty0 (5):\penalty0
  880--881, 1959.

\bibitem[Hartman(1960)]{hartman1960lemma}
Philip Hartman.
\newblock A lemma in the theory of structural stability of differential
  equations.
\newblock \emph{Proceedings of the American Mathematical Society}, 11\penalty0
  (4):\penalty0 610--620, 1960.

\bibitem[Jendoubi and May(2015)]{jendoubi2015asymptotics}
Mohamed~Ali Jendoubi and Ramzi May.
\newblock Asymptotics for a second-order differential equation with
  nonautonomous damping and an integrable source term.
\newblock \emph{Applicable Analysis}, 94\penalty0 (2):\penalty0 435--443, 2015.

\bibitem[Lin and Jordan(2022)]{lin2022control}
Tianyi Lin and Michael~I Jordan.
\newblock A control-theoretic perspective on optimal high-order optimization.
\newblock \emph{Mathematical Programming}, 195\penalty0 (1):\penalty0 929--975,
  2022.

\bibitem[Ljung(1977)]{ljung1977analysis}
Lennart Ljung.
\newblock Analysis of recursive stochastic algorithms.
\newblock \emph{IEEE transactions on automatic control}, 22\penalty0
  (4):\penalty0 551--575, 1977.

\bibitem[May(2015)]{may2015long}
Ramzi May.
\newblock Long time behavior for a semilinear hyperbolic equation with
  asymptotically vanishing damping term and convex potential.
\newblock \emph{Journal of Mathematical Analysis and Applications},
  430\penalty0 (1):\penalty0 410--416, 2015.

\bibitem[May(2017)]{may2017asymptotic}
Ramzi May.
\newblock Asymptotic for a second-order evolution equation with convex
  potential and vanishing damping term.
\newblock \emph{Turkish Journal of Mathematics}, 41\penalty0 (3):\penalty0
  681--685, 2017.

\bibitem[Nesterov(1983)]{nesterov1983method}
Yurii Nesterov.
\newblock A method of solving a convex programming problem with convergence
  rate {O}$\left(\frac{1}{k^2}\right)$.
\newblock In \emph{Doklady Akademii Nauk}, volume 269(3), pages 543--547, 1983.

\bibitem[Philip(2023)]{philipnotes}
Peter Philip.
\newblock Ordinary differential equations, lecture notes, 2023.
\newblock URL
  \url{https://www.math.lmu.de/~philip/publications/lectureNotes/philipPeter_ODE.pdf}.
\newblock [Online; accessed 17.07.2023].

\bibitem[Polyak(1969)]{polyak1969minimization}
Boris~T. Polyak.
\newblock Minimization of unsmooth functionals.
\newblock \emph{USSR Computational Mathematics and Mathematical Physics},
  9\penalty0 (3):\penalty0 14--29, 1969.

\bibitem[Polyak(1964)]{polyak1964}
B.T. Polyak.
\newblock Some methods of speeding up the convergence of iteration methods.
\newblock \emph{USSR Computational Mathematics and Mathematical Physics},
  4\penalty0 (5):\penalty0 1--17, 1964.

\bibitem[Seifert et~al.(2022)Seifert, Trostorff, and
  Waurick]{seifert2022evolutionary}
Christian Seifert, Sascha Trostorff, and Marcus Waurick.
\newblock \emph{Evolutionary Equations: Picard's Theorem for Partial
  Differential Equations, and Applications}.
\newblock Springer Nature, 2022.

\bibitem[Shi et~al.(2021)Shi, Du, Jordan, and Su]{shi2021resolution}
Bin Shi, Simon~S Du, Michael~I Jordan, and Weijie~J Su.
\newblock Understanding the acceleration phenomenon via high-resolution
  differential equations.
\newblock \emph{Mathematical Programming}, 195\penalty0 (1):\penalty0 79--148,
  2021.

\bibitem[Su et~al.(2014)Su, Boyd, and Candes]{NIPS2014}
Weijie Su, Stephen Boyd, and Emmanuel Candes.
\newblock A differential equation for modeling {N}esterov’s accelerated
  gradient method: Theory and insights.
\newblock In \emph{Advances in Neural Information Processing Systems
  ({NeurIPS})}, volume~27, pages 1--9. Curran Associates, Inc., 2014.

\bibitem[Wibisono et~al.(2016)Wibisono, Wilson, and
  Jordan]{wibisono2016variational}
Andre Wibisono, Ashia~C Wilson, and Michael~I Jordan.
\newblock A variational perspective on accelerated methods in optimization.
\newblock \emph{Proceedings of the National Academy of Sciences}, 113\penalty0
  (47):\penalty0 E7351--E7358, 2016.

\bibitem[Wilson et~al.(2021)Wilson, Recht, and Jordan]{wilson2021lyapunov}
Ashia~C Wilson, Ben Recht, and Michael~I Jordan.
\newblock A {L}yapunov analysis of accelerated methods in optimization.
\newblock \emph{Journal of Machine Learning Research}, 22\penalty0
  (1):\penalty0 5040--5073, 2021.

\end{thebibliography}
\end{document}